\newcounter{maint}
\numberwithin{equation}{section}
\begin{document}

\newtheorem{theorem}{Theorem}[section]

\newtheorem{lemma}[theorem]{Lemma}

\newtheorem{corollary}[theorem]{Corollary}
\newtheorem{proposition}[theorem]{Proposition}

\theoremstyle{remark}
\newtheorem{remark}[theorem]{Remark}

\theoremstyle{definition}
\newtheorem{definition}[theorem]{Definition}

\theoremstyle{definition}
\newtheorem{conjecture}[theorem]{Conjecture}

\newtheorem{example}[theorem]{Example}
\newtheorem{problem}[theorem]{Problem}


\def\k{\Bbbk}
\def\I{\mathbb{I}}
\def\ufo{\mathfrak{ufo}}
\newcommand{\Dchaintwo}[4]{
\rule[-3\unitlength]{0pt}{8\unitlength}
\begin{picture}(14,5)(0,3)
\put(2,4){\ifthenelse{\equal{#1}{l}}{\circle*{4}}{\circle{4}}}
\put(4,4){\line(2,0){20}}
\put(26,4){\ifthenelse{\equal{#1}{r}}{\circle*{4}}{\circle{4}}}
\put(2,10){\makebox[0pt]{\scriptsize #2}}
\put(14,8){\makebox[0pt]{\scriptsize #3}}
\put(26,10){\makebox[0pt]{\scriptsize #4}}
\end{picture}}
\title[Finite dimensional Nichols algebras over  Suzuki algebra 
\uppercase\expandafter{\romannumeral2}]
{Finite dimensional Nichols algebras over  Suzuki algebra 
\uppercase\expandafter{\romannumeral2}:
over simple Yetter-Drinfeld modules of $A_{N\,2n+1}^{\mu\lambda}$}

\author[Shi]{Yuxing Shi }
\address{School of Mathematics and Statistics, Jiangxi Normal University,  Nanchang 330022, P. R. China}\email{yxshi@jxnu.edu.cn}

\subjclass[2010]{16T05, 16T25, 17B22}
\thanks{
\textit{Keywords:} Nichols algebra; Hopf algebra; Suzuki algebra.
\\
This work was partially supported by
Foundation of Jiangxi Educational Committee (No.12020447)
 and Natural Science Foundation of Jiangxi Normal University(No. 12018937)
}

\begin{abstract}
In this paper, the author gives a complete set of simple Yetter-Drinfeld modules over Suzuki algebra $A_{N\,2n+1}^{\mu\lambda}$ \cite{Suzuki1998} and investigates the Nichols algebras over those irreducible Yetter-Drinfeld modules. 
The finite dimensional Nichols algebras of 
diagonal type are of Cartan type $A_1$, $A_1\times A_1$, $A_2$, Super type 
${\bf A}_{2}(q;\I_2)$ and the Nichols algebra $\ufo(8)$. And the involved finite dimensional Nichols algebras of non-diagonal type are $12$,  $4m$ and $m^2$ dimensional.  
The left three unsolved cases are set as open problems. 
\end{abstract}
\maketitle

\section{Introduction}

Let $\k$ be  an algebraicaly  closed field of characteristic $0$. The  paper is a sequel of 
\cite{Shi2020even} which have started the  classification project of finite dimensional 
Hopf algebras over Suzuki algebra 
$A_{Nn}^{\mu\lambda}$.

In 2004, Menini  et al. studied the quantum lines over $\mathcal{A}_{4m}$ and $\mathcal{B}_{4m}$, which are isomorphic to 
$A_{1m}^{++}$ and $A_{1m}^{+-}$ respectively\cite{MR2037722}. In 2019,  the author \cite{Shi2019} classified  finite dimensional Hopf algebras over the Kac-Paljutkin algebra $A_{12}^{+-} $ and Fantino et al. \cite{Fantino2019} classified finite dimensional Hopf algebras over the dual of dihedral group $\widehat{D}_{2m}$ of order $2m$, with  $m=4a\geq 12$, where $\widehat{D}_{2m}$ is a $2$-cocycle deformation of 
$A_{1\,2a}^{++}$ \cite{MR1800713}. 
In \cite{Shi2020even}, the author gave a complete set of simple Yetter-Drinfeld modules over Suzuki algebra $A_{N\,2n}^{\mu\lambda}$ and investigated the Nichols algebras over those irreducible Yetter-Drinfeld modules. In this paper, we are going to deal with the case $A_{N\,2n+1}^{\mu\lambda}$.

Our classification project over Suzuki algebra is based on the \textit{lifting method} which was introduced by Andruskiewitsch and Schneider \cite{MR1659895}. The lifting method is  a general framework  to classify finite dimensional Hopf algebras with a fixed sub-Hopf algebra as coradical.  
Here let us recall the procedure for the lifting method briefly. Let $H$ be a Hopf algebra whose coradical $H_0$ is a Hopf subalgebra. The associated
graded Hopf algebra of $H$ is isomorphic to $R\#H_0$ where
$R = \oplus_{n\in \mathbb N_0}R(n)$ is a braided Hopf algebra in the category ${}_{H_0}^{H_0}\mathcal{YD}$ of Yetter-Drinfield modules over $H_0$, $\#$ stands for the Radford  \textit{biproduct} or \textit{bosonization} of
$R$ with $H_0$. As explained in \cite{andruskiewitsch2001pointed}, to classify finite-dimensional Hopf algebras $H$ whose coradical is isomorphic to $H_0$ we have to deal with the following questions:
\begin{enumerate}\renewcommand{\theenumi}{\alph{enumi}}\renewcommand{\labelenumi}{(\theenumi)}
  \item\label{que:nichols-fd} Determine all Yetter-Drinfield modules $V$ over $H_0$ such that the Nichols algebra $\mathfrak{B}(V)$ has finite dimension; find an efficient set of relations for $\mathfrak{B}(V)$.
\item\label{que:nichols-R} If $R = \oplus_{n\in \mathbb N_0}R(n)$ is a finite-dimensional Hopf algebra in ${}_{H_0}^{H_0}\mathcal{YD}$ with $V = R(1)$, decide if $R \simeq \mathfrak{B}(V)$. Here $V = R(1)$ is a braided vector space called the \textit{infinitesimal braiding}.
\item\label{que:lifting} Given $V$ as in \eqref{que:nichols-fd}, classify all $H$ such that $\mathrm{gr}\, H \simeq \mathfrak{B}(V)\#H_0$ (lifting).
\end{enumerate}

According to Radford's method \cite[Proposition 2]{radford2003oriented}, we constructed a complete set 
of simple Yetter-Drinfeld modules over $A_{N\,2n+1}^{\mu\lambda}$. There are exactly 
$8N^2$ one-dimensional, $8N^2n(n+1)$ two-dimensional and $8N^2$ $(2n+1)$-dimensional non-isomorphic Yetter-Drinfeld modules over $A_{N\,2n+1}^{\mu\lambda}$, see the Theorem
\ref{YDM}.

The finite dimensional Nichols algebras of diagonal type over simple Yetter-Drinfeld modules of 
$A_{N\,2n+1}^{\mu\lambda}$ can be classified by the following theorem. 

\begin{theorem}\label{MainTheorem}
Let $M$ be a simple Yetter-Drinfeld module over $A_{N\,2n+1}^{\mu\lambda}$. If 
$\mathfrak{B}(M)$ is of diagonal type and $\dim \mathfrak{B}(M)<\infty$, then $\mathfrak{B}(M)$
can be classified as follows. 
\begin{enumerate}
\item Cartan type $A_1$, see Lemmas \ref{NA_A} and \ref{NA_B};
\item Cartan type $A_1\times A_1$, see Lemmas \ref{NA_C}, \ref{NA_D},
          \ref{NA_E}, \ref{NA_F}, \ref{NA_G}, \ref{NA_H} and \ref{NA_I};
\item Cartan type $A_2$, see Lemmas \ref{NA_C}, \ref{NA_D},
          \ref{NA_E}, \ref{NA_F}, \ref{NA_G}, \ref{NA_H} and \ref{NA_I};
\item Super type ${\bf A}_{2}(q;\I_2)$, see Lemmas \ref{NA_C} and \ref{NA_D};
\item The Nichols algebra $\ufo(8)$,  see Lemmas \ref{NA_C} and \ref{NA_D}.
\end{enumerate}
\end{theorem}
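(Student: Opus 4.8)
The plan is to reduce the statement to a finite, explicit check by combining the description of the simple Yetter-Drinfeld modules from Theorem~\ref{YDM} with Heckenberger's classification of finite-dimensional Nichols algebras of diagonal type. First I would fix a simple module $M$ in one of the three families --- the $8N^2$ one-dimensional, the $8N^2n(n+1)$ two-dimensional, or the $8N^2$ $(2n+1)$-dimensional ones --- and compute its braiding $c(m\otimes m')=m_{(-1)}\cdot m'\otimes m_{(0)}$ on a convenient basis, reading the action and coaction off Theorem~\ref{YDM}. The purpose of this computation is to decide, family by family and in terms of the discrete parameters indexing $M$, precisely when $(M,c)$ is of diagonal type, that is, when there is a basis $\{x_i\}$ with $c(x_i\otimes x_j)=q_{ij}\,x_j\otimes x_i$.

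For the one-dimensional modules the braiding is automatically diagonal of rank one, so $\mathfrak{B}(M)$ is finite-dimensional exactly when the single scalar $q_{11}$ is a root of unity of order at least $2$; this yields Cartan type $A_1$ and is the content of Lemmas~\ref{NA_A} and \ref{NA_B}. For the two-dimensional modules I would diagonalize the $2\times 2$ braiding: whenever it is diagonalizable one reads off the braiding matrix $(q_{ij})_{1\le i,j\le 2}$ and its generalized Dynkin diagram on two vertices. The $(2n+1)$-dimensional modules form the higher-rank case; here one checks that a finite-dimensional Nichols algebra of diagonal type cannot occur, so that the genuine diagonal contributions all come from rank $\le 2$, consistent with the lemma citations in the statement.

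With the rank-one and rank-two diagonal data assembled, the classification is completed by matching each generalized Dynkin diagram against Heckenberger's list of arithmetic root systems of finite type. The rank-two diagrams force $q_{11}$, $q_{22}$ and the product $q_{12}q_{21}$ to be specific roots of unity, and imposing finiteness selects exactly the diagrams of Cartan type $A_1\times A_1$ and $A_2$, the super type $\mathbf{A}_2(q;\I_2)$, and the diagram of $\ufo(8)$; these are recorded in Lemmas~\ref{NA_C} through \ref{NA_I}.

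The hard part will be the parameter bookkeeping rather than any isolated difficult argument. The Suzuki algebra carries the data $N,n,\mu,\lambda$ and each module family is indexed by further discrete parameters, so writing out all the entries $q_{ij}$ and then solving the resulting root-of-unity equations --- the order conditions on the $q_{ii}$ together with the Cartan-integer and finite-type conditions on the products $q_{ij}q_{ji}$ --- demands a careful exhaustive analysis. The most delicate point is to verify that, over all admissible parameter values, no diagram outside the five stated types can arise, while each of these types is genuinely realized; equivalently, that the root-of-unity constraints produced by the braiding cut out exactly the entries appearing in Heckenberger's finite-type tables.
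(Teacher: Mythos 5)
Your strategy matches the paper's: Theorem~\ref{MainTheorem} is proved there exactly by computing the braiding of each simple Yetter--Drinfeld module from Theorem~\ref{YDM} case by case (Lemmas~\ref{NA_A}--\ref{NA_I}) and matching the resulting diagonal braiding matrices against Heckenberger's classification, which is what you propose. One small correction: the $(2n+1)$-dimensional modules $\mathscr{L}_{k,pq}^s$ and $\mathscr{N}_{k,pq}^s$ are excluded from the theorem not because finite dimension fails (the paper in fact exhibits $12$-dimensional examples and leaves $n\ge 2$ open) but because their braidings are of nontrivial rack type and hence never diagonal, so they fall outside the hypothesis rather than being ruled out by a finiteness check.
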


There are three unsolved cases which are difficult in general in the paper. 
\begin{problem}
Determine the dimensions of the following Nichols algebras. 
\begin{enumerate}
\item $\mathfrak{B}\left(\mathscr{L}_{k,pq}^s\right)$, $n\geq 2$,  see the section \ref{section_NAL};
\item $\mathfrak{B}(V_{abe})$, $b^2\neq ae\neq 1$, $b^2\neq (ae)^{-1}$, $b\in\Bbb{G}_{m}$ for $m\geq 3$;
\item $\mathfrak{B}\left(\mathscr{N}_{k,pq}^s\right)$,  $n\geq 2$, see the section \ref{section_NAN}.
\end{enumerate}
\end{problem}

The paper is organized as follows. In section 1, we introduce the  background of the paper and summarize our main results. In section 2, we make an introduction for the Suzuki algebra and
 construct all simple representation of $A_{N\,2n+1}^{\mu\lambda}$.
 In section 3, we construct all simple Yetter-Drinfeld modules over $A_{N\,2n+1}^{\mu\lambda}$
 by using Radford's method and we put those Yetter-Drinfeld modules in the appendix.  In section 4, we calculate Nichols algebras over simple Yetter-Drinfeld modules of $A_{N\,2n+1}^{\mu\lambda}$.

\section{The Representations of the Hopf algebra $A_{N\,2n+1}^{\mu\lambda}$}
Suzuki introduced a family of cosemisimple Hopf algebras $A_{Nn}^{\mu\lambda}$ parametrized by integers $N\geq 1$, $n\geq 2$ and $\mu$, $\lambda=\pm 1$, and investigated various properties and structures of them \cite{Suzuki1998}.
Wakui studied the Suzuki algebra $A_{Nn}^{\mu\lambda}$ in perspectives of 
polynomial invariant\cite{Wakui2010a}, braided Morita invariant\cite{Wakui2019} and coribbon structures\cite{Wakui2003}. 
The Hopf algebra  $A_{Nn}^{\mu\lambda}$ is generated by $x_{11}$, $x_{12}$, $x_{21}$, 
$x_{22}$ subject to the relations:
\begin{align*}
&x_{11}^2=x_{22}^2,\quad x_{12}^2=x_{21}^2,\quad \chi _{21}^n=\lambda\chi _{12}^n,
\quad \chi _{11}^n=\chi _{22}^n,\\
&x_{11}^{2N}+\mu x_{12}^{2N}=1,\quad
x_{ij}x_{kl}=0\,\, \text{whenever $i+j+k+l$ is odd}, 
\end{align*}
where we use the following notation for $m\geq 1$, 
$$\chi _{11}^m:=\overbrace{x_{11}x_{22}x_{11}\ldots\ldots }^{\textrm{$m$ }},\quad \chi _{22}^m:=\overbrace{x_{22}x_{11}x_{22}\ldots\ldots }^{\textrm{$m$ }},$$
$$\chi _{12}^m:=\overbrace{x_{12}x_{21}x_{12}\ldots\ldots }^{\textrm{$m$ }},\quad \chi _{21}^m:=\overbrace{x_{21}x_{12}x_{21}\ldots\ldots }^{\textrm{$m$ }}.$$

The Hopf algebra structure of $A_{Nn}^{\mu\lambda}$ is given by
\begin{equation}\label{eq5.3}
\Delta (\chi_{ij}^k)=\chi_{i1}^k\otimes \chi_{1j}^k+\chi_{i2}^k\otimes \chi_{2j}^k,\quad 
\varepsilon(x_{ij})=\delta_{ij}, \quad S(x_{ij})=x_{ji}^{4N-1}, 
\end{equation}
for $k\geq 1$, $i,j=1,2$. 

Let $\overline{i,i+j}=\{i,i+1,i+2,\cdots,i+j\}$ be  an index set. 
Then the basis of $A_{Nn}^{\mu\lambda}$ can be represented by 
\begin{equation}\label{eq5.2}
\left\{x_{11}^s\chi _{22}^t,\ x_{12}^s\chi _{21}^t  \mid
s\in\overline{1,2N}, t\in\overline{0,n-1}
\right\}.
\end{equation}

Thus for $s,t\geq 0$ with $s+t\geq 1$, 
\begin{align*}
\Delta (x_{11}^s\chi _{22}^t)
&=x_{11}^s\chi _{22}^t\otimes x_{11}^s\chi _{22}^t
+x_{12}^s\chi _{21}^t\otimes x_{21}^s\chi _{12}^t,\\ 
\Delta (x_{12}^s\chi _{21}^t)
&=x_{11}^s\chi _{22}^t\otimes x_{12}^s\chi _{21}^t
+x_{12}^s\chi _{21}^t\otimes x_{22}^s\chi _{11}^t. 
\end{align*}

The cosemisimple Hopf algebra $A_{Nn}^{\mu\lambda}$ is decomposed to the direct sum of simple subcoalgebras such as $A_{Nn}^{\mu\lambda}
=\bigoplus_{g\in G}\k g\oplus\bigoplus_{\substack{0\leq s\leq N-1\\ 1\leq t\leq n-1}}C_{st}$
\cite[Theorem 3.1]{Suzuki1998}\cite[Proposition 5.5]{Wakui2010a}, 
where 
\begin{align*}
G&=\left\{x_{11}^{2s}\pm x_{12}^{2s}, x_{11}^{2s+1}\chi_{22}^{n-1}\pm 
        \sqrt{\lambda}x_{12}^{2s+1}\chi_{21}^{n-1}\mid s\in\overline{1,N}\right\},\\
C_{st}&=\k x_{11}^{2s}\chi_{11}^t+\k x_{12}^{2s}\chi_{12}^t+
              \k x_{11}^{2s}\chi_{22}^t+\k x_{12}^{2s}\chi_{21}^t,\quad
              s\in\overline{1,N}, t\in\overline{1,n-1}. 
\end{align*}
The set $\left\{\k g\mid g\in G\right\}\cup \left\{\k x_{11}^{2s}\chi_{11}^t
+\k x_{12}^{2s}\chi_{21}^t \mid s\in\overline{1,N}, t\in\overline{1,n-1} 
\right\}$ 
is a full set of non-isomorphic simple left $A_{Nn}^{\mu\lambda}$-comodules, where 
the coactions of the comodules listed above are given by the coproduct $\Delta$. Denote 
the comodule $\k x_{11}^{2s}\chi_{11}^t
+\k x_{12}^{2s}\chi_{21}^t $ by $\Lambda_{st}$. That is to say the comodule 
$\Lambda_{st}=\k w_1+\k w_2$
is defined as 
\begin{align*}
\rho\left(w_1\right)
=  x_{11}^{2s}\chi_{11}^t\otimes w_1
     +x_{12}^{2s}\chi_{12}^t\otimes w_2,\quad 
\rho\left(w_2\right)
=  x_{11}^{2s}\chi_{22}^t\otimes w_2
     +x_{12}^{2s}\chi_{21}^t\otimes w_1 .
\end{align*}

\begin{proposition}
Let   $\omega$ be a primitive $4(2n+1)N$-th root of unity, and 
\[
\tilde{\mu}=\left\{\begin{array}{rl}
1,&\mu=1,\\
\omega^{2n+1}, &\mu=-1, 
\end{array}\right.
\quad
\bar{\mu}=\left\{\begin{array}{rl}
1,&\mu=1,\\
\omega^{2(2n+1)}, &\mu=-1.
\end{array}\right.
\]
Then a full set of non-isomorphic simple left $A_{N\,2n+1}^{\mu\lambda}$-modules is given by
\begin{enumerate}
\item $V_k$, $k\in\overline{0,2N-1}$. The action of $A_{N\,2n+1}^{\mu\lambda}$ on $V_{k}$ is given by 
\[
x_{12}\mapsto 0,\quad x_{21}\mapsto 0,\quad 
 x_{11}\mapsto \omega^{2k(2n+1)},\quad  x_{22}\mapsto \omega^{2k(2n+1)};
 \]
 
\item $V_k^\prime$, $k\in\overline{0,2N-1}$. The action of $A_{N\,2n+1}^{\mu\lambda}$ on $V_{k}^\prime$ is given by 
\[
x_{11}\mapsto 0,\quad x_{22}\mapsto 0,\quad 
x_{12}\mapsto \omega^{2k(2n+1)}\tilde{\mu},\quad  
x_{21}\mapsto \omega^{2k(2n+1)}\tilde{\mu}\lambda;
\]

\item $V_{jk}$, $k\in\overline{0,N-1}$, 
            $\frac j2\in \overline{1,n}$.
           The action of $A_{N\,2n+1}^{\mu\lambda}$ on the row vector $(v_{1}, v_{2})$ 
           for $V_{jk}=\k v_{1}\oplus \k v_{2}$ is given by 
           \[
           x_{11}\mapsto \begin{pmatrix}0&\omega^{4k(2n+1)-2jN}\\
           \omega^{2jN}&0
           \end{pmatrix},\quad x_{12},x_{21}\mapsto 0,\quad 
            x_{22}\mapsto \begin{pmatrix}0&\omega^{4k(2n+1)}\\
           1&0
          \end{pmatrix};
           \]

\item  $V_{jk}^\prime$, $k\in\overline{0,N-1}$, 
            $\frac j2\in \overline{1,n}$. 
The action of $A_{N\,2n+1}^{\mu\lambda}$ on the row vector $(v_{1}^\prime, v_{2}^\prime)$ 
 for $V_{jk}^\prime=\k v_{1}^\prime\oplus \k v_{2}^\prime$  is given by 
 \[
 x_{21}\mapsto \begin{pmatrix}0& \lambda\bar{\mu}\omega^{4k(2n+1)-2jN}\\
 \lambda\omega^{2jN}&0
 \end{pmatrix},\quad x_{11},x_{22}\mapsto 0,\quad 
  x_{12}\mapsto \begin{pmatrix}0&\bar{\mu}\omega^{4k(2n+1)}\\
 1&0
 \end{pmatrix}.
 \]
\end{enumerate}
\end{proposition}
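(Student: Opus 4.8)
\emph{Strategy.} To prove the list is a full set of simple modules I would verify four things in turn: (i) each assignment extends to an algebra homomorphism $A_{N\,2n+1}^{\mu\lambda}\to\mathrm{End}(V)$, i.e. the displayed matrices satisfy the defining relations; (ii) each module is simple; (iii) the modules are pairwise non-isomorphic; (iv) the list is exhaustive. Since $\k$ has characteristic $0$ and $A_{N\,2n+1}^{\mu\lambda}$ is cosemisimple, the Larson--Radford theorem gives that it is \emph{semisimple}, so Artin--Wedderburn applies and step (iv) can be settled purely by matching $\sum_i(\dim V_i)^2$ against $\dim A_{N\,2n+1}^{\mu\lambda}$.

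\emph{Relations (i).} The verification splits cleanly by parity. In the families $V_k,V_{jk}$ the odd generators $x_{12},x_{21}$ act by $0$, while in $V_k',V_{jk}'$ the even generators $x_{11},x_{22}$ act by $0$; in either case every product $x_{ij}x_{kl}$ with $i+j+k+l$ odd vanishes automatically. For the one-dimensional modules the remaining relations reduce to scalar identities: $x_{11}^{2N}+\mu x_{12}^{2N}=1$ forces $x_{12}^{2N}=\mu^{-1}$ on $V_k'$, which is exactly what the normalization $\tilde\mu$ achieves (when $\mu=-1$ one has $\tilde\mu^{2N}=\omega^{2N(2n+1)}=-1$), while $\chi_{21}^{2n+1}=\lambda\chi_{12}^{2n+1}$ is what fixes the extra factor $\lambda$ in $x_{21}$. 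For the two-dimensional modules the only real work is computing the alternating monomials $\chi_{11}^{2n+1}=(x_{11}x_{22})^n x_{11}$ and $\chi_{22}^{2n+1}=(x_{22}x_{11})^n x_{22}$; here $x_{11}x_{22}$ and $x_{22}x_{11}$ are diagonal, so the powers are immediate, and comparing exponents of $\omega$ modulo $4(2n+1)N$ shows $\chi_{11}^{2n+1}=\chi_{22}^{2n+1}$ holds precisely because $j$ is even, i.e. because of the hypothesis $\tfrac{j}{2}\in\overline{1,n}$. The relation $x_{11}^{2N}=1$ on $V_{jk}$ follows from $x_{11}^2=\omega^{4k(2n+1)}I$ and $\omega^{4(2n+1)N}=1$, and the primed analogues are dispatched by the parallel role of $\bar\mu=\tilde\mu^2$.

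\emph{Simplicity and non-isomorphism (ii)--(iii).} For the two-dimensional modules I would observe that $x_{11}x_{22}$ is diagonal with entries $\omega^{4k(2n+1)\mp 2jN}$, which are distinct because $0<j\le 2n<2n+1$ forces $\omega^{4jN}\neq 1$; a diagonal matrix with distinct eigenvalues together with the off-diagonal $x_{11}$ generates all of $M_2(\k)$, so $V_{jk}$ (and likewise $V_{jk}'$) is simple by the density theorem. Non-isomorphism I would then read off from invariants: the scalar $x_{11}^2=\omega^{4k(2n+1)}I$ recovers $k\in\overline{0,N-1}$; the unordered pair of eigenvalues of $x_{11}x_{22}$ recovers $j$, the potential ambiguity $j\equiv -j'$ being excluded since $\tfrac{j}{2}+\tfrac{j'}{2}\le 2n<2n+1$; the vanishing pattern ($x_{11}\neq 0$ versus $x_{12}\neq 0$) separates the primed from the unprimed families; and dimension separates the first two families from the last two. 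The one-dimensional modules are distinguished by their distinct $2N$-th-power values on $x_{11}$ (resp. $x_{12}$).

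\emph{Completeness (iv), and the main obstacle.} From the basis \eqref{eq5.2} one has $\dim A_{N\,2n+1}^{\mu\lambda}=4N(2n+1)$, while the candidate list contributes $2N+2N$ from the one-dimensional families and $4\cdot Nn+4\cdot Nn$ from the two two-dimensional families, for a total of $4N+8Nn=4N(2n+1)$. Equality of these numbers, together with semisimplicity, shows there are no further simple modules. I expect the main obstacle to be the relation check in step (i) for the two-dimensional modules: one must handle the length-$(2n+1)$ alternating matrix products and confirm all four $\chi$-relations while tracking exponents of $\omega$ correctly modulo $4(2n+1)N$, and it is exactly here that the arithmetic constraints on $j,k$ and the normalizations $\tilde\mu,\bar\mu$ are forced.
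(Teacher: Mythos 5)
The paper does not actually supply a proof of this proposition --- the remark following it reads ``We left the proof to the readers since it's easy and tedious'' --- so there is no argument of the author's to compare yours against; what you have written is the standard verification that the author is implicitly invoking, and it is correct. Your four-step plan (relations, simplicity, non-isomorphism, exhaustion by counting) is the right one, and the quantitative checkpoints all come out right: the basis \eqref{eq5.2} gives $\dim A_{N\,2n+1}^{\mu\lambda}=2\cdot 2N\cdot(2n+1)=4N(2n+1)$, which matches $2N+2N+4Nn+4Nn$; cosemisimplicity in characteristic $0$ does give semisimplicity by Larson--Radford, so the count settles completeness; the relation $\chi_{11}^{2n+1}=\chi_{22}^{2n+1}$ on $V_{jk}$ reduces, after writing $x_{11}x_{22}$ and $x_{22}x_{11}$ as diagonal matrices, to $2jN(2n+1)\equiv 0 \bmod 4N(2n+1)$, i.e.\ exactly to $2\mid j$; the normalizations satisfy $\tilde\mu^{2N}=\bar\mu^{N}=\omega^{2N(2n+1)}=-1$ when $\mu=-1$, which is what $x_{11}^{2N}+\mu x_{12}^{2N}=1$ requires on the primed modules; and $\chi_{21}^{2n+1}=\lambda\chi_{12}^{2n+1}$ balances because both sides carry $\lambda^{n+1}$. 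Two very minor points of polish: the phrase ``distinct $2N$-th-power values on $x_{11}$'' is misleading since $x_{11}^{2N}=1$ on every $V_k$ --- you mean that the scalars $\omega^{2k(2n+1)}$, being the $2N$ distinct powers of a primitive $2N$-th root of unity, already separate the characters; and for the primed two-dimensional family you should say explicitly that the nontrivial relation to check is $\chi_{21}^{2n+1}=\lambda\chi_{12}^{2n+1}$ (the $\chi_{11}$/$\chi_{22}$ relation being trivially $0=0$ there), though the computation is indeed word-for-word parallel as you assert. Neither point is a gap.
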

\begin{remark}
 We left the proof to the readers since it's easy and tedious. 
\end{remark}
\section{Yetter-Drinfeld modules over $A_{N\,2n+1}^{\mu\lambda}$}
Similarly according to Radford's method \cite[Proposition 2]{radford2003oriented}, any simple left Yetter-Drinfeld module over  a Hopf algebra $H$ could be constructed by the submodule of  tensor product of a left  module $V$ of $H$ and
$H$ itself, where the  module and comodule structures are given by :
\begin{align}
h\cdot (\ell\boxtimes g)
&=(h_{(2)}\cdot \ell)\boxtimes h_{(1)}gS(h_{(3)}),\label{eq:action}\\
\rho(\ell\boxtimes h)
&=h_{(1)}\otimes (\ell\boxtimes h_{(2)}) , \forall h, g\in H, \ell\in V.
\label{eq:coaction}
\end{align}
Here we use $\boxtimes$ instead of $\otimes$ to avoid confusion by using too many symbols of the tensor product.  we construct all simple left Yetter-Drinfeld modules over $A_{N\,2n+1}^{\mu\lambda}$ in this way and put them in the appendix without proof since it's tedious verification with the definition of Yetter-Drinfeld modules. 
Firstly, it's easy to see that there are $8N^2$ pairwise non-isomorphic simple Yetter-Drinfeld modules of one-dimension from the Table \ref{YDMod1}. Secondly, let us take a closer look at those Yetter-Drinfeld 
modules which are isomorphic as $A_{N\,2n+1}^{\mu\lambda}$-modules and $A_{N\,2n+1}^{\mu\lambda}$-comodules, but the modules isomorphism and comodules isomorphism are  incompatible. For example: 
\begin{enumerate}[I]
\item Yetter-Drinfeld modules in the part (3), (4) of  the list are 
  non-isomorphic since 
         \begin{align*}
         \mathscr{C}_{jk,p}^{st}&=\k w_1\oplus \k w_2\simeq V_{jk}, 
         &\mathscr{C}_{jk,p}^{st}&=\k w_2\oplus \k \left((-1)^p\omega^{2k(2n+1)-jN}w_1\right)
         \simeq \Lambda_{s\, 2t+2},\\
         \mathscr{D}_{jk,p}^{st}&=\k w_1\oplus \k w_2\simeq V_{jk}, 
         &\mathscr{D}_{jk,p}^{st}&=\k w_1\oplus \k \left((-1)^p\omega^{jN-2k(2n+1)}w_2\right)
         \simeq \Lambda_{s\, 2t+2}.
         \end{align*}
\item Yetter-Drinfeld modules in the part (8), (9) of  the list are 
non-isomorphic since 
         \begin{align*}
         \mathscr{H}_{jk,p}^{st}&=\k w_1\oplus \k w_2\simeq V_{jk}^\prime, 
         &\mathscr{H}_{jk,p}^{st}&=\k w_2\oplus 
         \k \left((-1)^p\sqrt{\lambda\bar{\mu}}\omega^{2k(2n+1)-jN}w_1\right)
         \simeq \Lambda_{s\, 2t+1},\\
         \mathscr{I}_{jk,p}^{st}&=\k w_1\oplus \k w_2\simeq V_{jk}^\prime, 
         &\mathscr{I}_{jk,p}^{st}&=\k w_1\oplus 
         \k \left(\frac{(-1)^p}{\sqrt{\lambda\bar{\mu}}}\omega^{jN-2k(2n+1)}w_2\right)
         \simeq \Lambda_{s\, 2t+1}.
         \end{align*}
\end{enumerate}
Now we can see that there are $8N^2n(n+1)$ pairwise non-isomorphic simple Yetter-Drinfeld modules of two-dimension from the Table \ref{YDMod1}. 
While we break the Yetter-Drinfeld module $M\boxtimes A_{N\,2n+1}^{\mu\lambda}$ into small 
Yetter-Drinfeld modules for any simple left $A_{N\,2n+1}^{\mu\lambda}$-module $M$, there are four 
class Yetter-Drinfeld modules in total whose dimensions are greater than two. And they have the relations 
$
\mathscr{K}_{k,p}^s\simeq \mathscr{L}_{k,0p}^s$,
$\mathscr{M}_{k,p}^s\simeq \mathscr{N}_{k,0p}^s
$ as Yetter-Drinfeld modules. Since 
\[
8N^2\cdot 1^2+8N^2n(n+1)\cdot 2^2+8N^2\cdot (2n+1)^2=\left[4N(2n+1)\right]^2,
\]
the list is a complete set of simple Yetter-Drinfeld modules.

\begin{theorem}\label{YDM}
A complete set of simple Yetter-Drinfeld modules over Suzuki algebra $A_{N\,2n+1}^{\lambda\mu}$
is given as follows. 
\begin{enumerate}
\item There are $8N^2$ pairwise  non-isomorphic Yetter-Drinfeld modules of one dimenion:
\begin{enumerate}
\item $\mathscr{A}_{k,p}^s$, $s\in\overline{1,N}$, $k\in\overline{0,2N-1}$, $p\in\Bbb{Z}_2$;
\item $\mathscr{B}_{k,p}^s$, $s\in\overline{1,N}$, $k\in\overline{0,2N-1}$, $p\in\Bbb{Z}_2$.
\end{enumerate}
\item There are $8N^2n(n+1)$ pairwise  non-isomorphic Yetter-Drinfeld modules of two
dimension:
\begin{enumerate}
\item $\mathscr{C}_{jk,p}^{st}$, $s\in\overline{1,N}$, $t\in\overline{0,n-1}$, 
    $\frac j2\in\overline{1,n}$, 
    $k\in\overline{0,N-1}$, $p\in\Bbb{Z}_2$;
\item $\mathscr{D}_{jk,p}^{st}$, $s\in\overline{1,N}$, $t\in\overline{0,n-1}$, 
    $\frac j2\in\overline{1,n}$, 
    $k\in\overline{0,N-1}$, $p\in\Bbb{Z}_2$;
\item $\mathscr{E}_{jk,p}^{s}$, $s\in\overline{1,N}$, 
    $\frac j2\in\overline{1,n}$, 
    $k\in\overline{0,N-1}$, $p\in\Bbb{Z}_2$;
\item $\mathscr{F}_{k,p}^{st}$, $s\in\overline{1,N}$, $t\in\overline{0,n-1}$, 
    $k\in\overline{0,2N-1}$, $p\in\Bbb{Z}_2$;
\item $\mathscr{G}_{k,p}^{st}$, $s\in\overline{1,N}$, $t\in\overline{0,n-1}$, 
    $k\in\overline{0,2N-1}$, $p\in\Bbb{Z}_2$;
\item $\mathscr{H}_{jk,p}^{st}$, $s\in\overline{1,N}$, $t\in\overline{0,n-1}$, 
    $\frac j2\in\overline{1,n}$, 
    $k\in\overline{0,N-1}$, $p\in\Bbb{Z}_2$;
\item $\mathscr{I}_{jk,p}^{st}$, $s\in\overline{1,N}$, $t\in\overline{0,n-1}$, 
    $\frac j2\in\overline{1,n}$, 
    $k\in\overline{0,N-1}$, $p\in\Bbb{Z}_2$;
\item $\mathscr{I}_{jk,p}^{st}$, $s\in\overline{1,N}$, $t=n$, 
    $\frac j2\in\overline{1,n}$, 
    $k\in\overline{0,N-1}$, $p\in\Bbb{Z}_2$.
\end{enumerate}
\item There are $8N^2$ pairwise  non-isomorphic Yetter-Drinfeld modules of $2n+1$
dimension:
\begin{enumerate}
\item $\mathscr{L}_{k,pq}^s$, $s\in\overline{1,N}$, 
    $k\in\overline{0,N-1}$, $p,q\in\Bbb{Z}_2$;
\item $\mathscr{N}_{k,pq}^s$, $s\in\overline{1,N}$, 
    $k\in\overline{0,N-1}$, $p,q\in\Bbb{Z}_2$.
\end{enumerate}
\end{enumerate}
\end{theorem}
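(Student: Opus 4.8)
The plan is to realize every simple Yetter-Drinfeld module as a summand of $V\boxtimes A_{N\,2n+1}^{\mu\lambda}$ with $V$ ranging over the simple modules $V_k$, $V_k'$, $V_{jk}$, $V_{jk}'$ of the Proposition, and then to pin down the isomorphism classes by a joint module--comodule invariant, confirming completeness against a dimension identity. First I would equip each $V\boxtimes H$ with the structures \eqref{eq:action}--\eqref{eq:coaction}; by Radford's criterion \cite[Proposition 2]{radford2003oriented} every simple object of $\ydh$ occurs as a summand of some such $V\boxtimes H$, so it suffices to split these four families into simples. That each resulting piece $\mathscr{A},\dots,\mathscr{N}$ genuinely satisfies the Yetter-Drinfeld axioms is a direct, if lengthy, check, which I would relegate to the appendix.

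The splitting is dictated by the comodule structure. Since \eqref{eq:coaction} gives $\rho(\ell\boxtimes h)=h_{(1)}\otimes(\ell\boxtimes h_{(2)})$, the coaction depends only on the comultiplication of $H$ and therefore respects the decomposition of $H$ into simple subcoalgebras $H=\bigoplus_{g\in G}\k g\oplus\bigoplus_{s,t}C_{st}$ recalled above. I would run block by block: a group-like $g$ contributes a one-dimensional comodule and, after feeding the $V$-action through \eqref{eq:action}, the rank-one families; each four-dimensional block $C_{st}$, carrying the comodule $\Lambda_{st}$, contributes the two-dimensional pieces and, when $V$ is itself two-dimensional, the large $(2n+1)$-dimensional pieces. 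Diagonalizing the induced finite abelian group action on each block supplies explicit weight bases $w_1,w_2,\dots$, matching the appendix.

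The delicate step, and the one I expect to carry the real weight, is the isomorphism bookkeeping. A Yetter-Drinfeld isomorphism must intertwine the module and the comodule structures \emph{simultaneously}, so I cannot merely count module types or comodule types separately. The examples I and II above are precisely the traps: $\mathscr{C}_{jk,p}^{st}$ and $\mathscr{D}_{jk,p}^{st}$ are both isomorphic as modules to $V_{jk}$ and as comodules to $\Lambda_{s\,2t+2}$ (and likewise $\mathscr{H}_{jk,p}^{st},\mathscr{I}_{jk,p}^{st}$ relative to $V_{jk}'$ and $\Lambda_{s\,2t+1}$), yet no single map realizes both identifications at once, so they remain distinct as Yetter-Drinfeld modules. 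For each family I would therefore record the weight together with the group-like degree that grades it, read off which index choices produce genuinely coincident objects, and verify that the parameter ranges in the statement are exactly the resulting fundamental domains. The identifications $\mathscr{K}_{k,p}^s\simeq\mathscr{L}_{k,0p}^s$ and $\mathscr{M}_{k,p}^s\simeq\mathscr{N}_{k,0p}^s$ then fold the four naive large families into the two surviving ones.

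Completeness I would establish by counting rather than by exhaustion. As $A_{N\,2n+1}^{\mu\lambda}$ is cosemisimple and hence, in characteristic $0$, also semisimple, its Drinfeld double is semisimple and $\ydh\simeq{}_{D(H)}\mathcal{M}$; consequently $\sum_M(\dim M)^2=\dim D(H)=(\dim H)^2=[4N(2n+1)]^2$. The tally $8N^2\cdot 1^2+8N^2n(n+1)\cdot 2^2+8N^2\cdot(2n+1)^2$ equals this number, so the enumerated list accounts for every simple Yetter-Drinfeld module and none is missing.
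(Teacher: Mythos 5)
Your proposal follows essentially the same route as the paper: Radford's construction of the simples inside $V\boxtimes H$, explicit decomposition into the listed families, the key observation that $\mathscr{C}_{jk,p}^{st}/\mathscr{D}_{jk,p}^{st}$ and $\mathscr{H}_{jk,p}^{st}/\mathscr{I}_{jk,p}^{st}$ must be distinguished as Yetter--Drinfeld modules even though they coincide separately as modules and as comodules, and completeness via the count $\sum_M(\dim M)^2=(\dim H)^2=[4N(2n+1)]^2$, for which you make explicit the semisimplicity of $D(H)$ that the paper leaves implicit. The one loose point is the claim that one can split ``block by block'' along the simple subcoalgebras: the action \eqref{eq:action} does not preserve the subspaces $V\boxtimes C_{st}$, and indeed the $(2n+1)$-dimensional simples $\mathscr{L}_{k,pq}^s$ and $\mathscr{N}_{k,pq}^s$ have comodule constituents spread over several blocks, so the decomposition must be carried out directly as Yetter--Drinfeld modules rather than one coalgebra block at a time.
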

\begin{remark}
As for the description of those simple Yetter-Drinfeld modules, please see the Appendix. 
\end{remark}

\begin{table}
\newcommand{\tabincell}[2]{\begin{tabular}{@{}#1@{}}#2\end{tabular}}
\begin{tabular}{|c|c|c|c|c|}
\hline
& dim &  parameters &  mod   & comod\\\hline
$\mathscr{A}_{k,p}^s$ 
& $1$
&\tabincell{c}{\vspace{1mm} $s\in\overline{1,N}$, $p\in\Bbb{Z}_2$\\ $k\in\overline{0,2N-1}$\vspace{1mm}} 
&  $V_k$  
&\tabincell{c}{$\k g_s^+, p=0$\\ $\k g_s^-, p=1$}
\\\hline
$\mathscr{B}_{k,p}^s$ 
& $1$
&\tabincell{c}{\vspace{1mm} $s\in\overline{1,N}$, $p\in\Bbb{Z}_2$\\ $k\in\overline{0,2N-1}$\vspace{1mm}} 
&  $V_k^\prime$  
&\tabincell{c}{$\k h_s^+, p=0$\\ $\k h_s^-, p=1$}
\\\hline
\tabincell{c}{$\mathscr{C}_{jk,p}^{st}$}
& $2$
& \tabincell{c}{$s\in\overline{1,N}$,  $t\in\overline{0,n-1}$,\\
    $\frac j2\in\overline{1,n}$, $p\in\Bbb{Z}_2$,\\$k\in\overline{0,N-1}$}
& $V_{jk}$ 
& $\Lambda_{s\,2t+2}$
\\\hline
\tabincell{c}{$\mathscr{D}_{jk,p}^{st}$}
& $2$
& \tabincell{c}{$s\in\overline{1,N}$,  $t\in\overline{0,n-1}$,\\
    $\frac j2\in\overline{1,n}$, $p\in\Bbb{Z}_2$,\\$k\in\overline{0,N-1}$}
& $V_{jk}$ 
& $\Lambda_{s\,2t+2}$
\\\hline
$\mathscr{E}_{jk,p}^s$
& $2$
& \tabincell{c}{$s\in\overline{1,N}$,  \\
    $\frac j2\in\overline{1,n}$, $p\in\Bbb{Z}_2$,\\$k\in\overline{0,N-1}$}
& $V_{jk}$ 
&\tabincell{c}{$\k g_s^+\oplus \k g_s^-$}
\\\hline
\tabincell{c}{$\mathscr{F}_{k,p}^{st}$}
& $2$
& \tabincell{c}{$s\in\overline{1,N}$, $t\in\overline{0,n-1}$,\\ $k\in\overline{0,2N-1}$, $p\in\Bbb{Z}_2$} 
&\tabincell{c}{$V_k\oplus V_d$,\\ $k+N\equiv d\,\mathrm{mod}\, 2N$}    
&$\Lambda_{s\,2t+2}$
\\\hline
$\mathscr{G}_{k,p}^{st}$
& $2$
&\tabincell{c}{$s\in\overline{1,N}$, $t\in\overline{0,n-1}$,\\ $k\in\overline{0,2N-1}$, $p\in\Bbb{Z}_2$}
& \tabincell{c}{$V_k^\prime\oplus V_d^\prime$,\\$k+N\equiv d\,\mathrm{mod}\, 2N$}
& $\Lambda_{s\,2t+1}$
\\\hline
$\mathscr{H}_{jk,p}^{st}$
& $2$
& \tabincell{c}{$s\in\overline{1,N}$,  $t\in\overline{0,n-1}$,\\
    $\frac j2\in\overline{1,n}$, $p\in\Bbb{Z}_2$,\\$k\in\overline{0,N-1}$}
& $V_{jk}^\prime$
& $\Lambda_{s\,2t+1}$
\\\hline
 \multirow{2}{*}{$\mathscr{I}_{jk,p}^{st}$}
& $2$
& \tabincell{c}{$s\in\overline{1,N}$,  $t\in\overline{0,n-1}$,\\
    $\frac j2\in\overline{1,n}$, $p\in\Bbb{Z}_2$,\\$k\in\overline{0,N-1}$}
& $V_{jk}^\prime$
& $\Lambda_{s\,2t+1}$
\\\cline{2-5}
& $2$
& \tabincell{c}{$s\in\overline{1,N}$,  $ t=n$,\\
    $\frac j2\in\overline{1,n}$, $p\in\Bbb{Z}_2$,\\$k\in\overline{0,N-1}$}
& $V_{jk}^\prime$
&\tabincell{c}{$\k h_s^+\oplus \k h_s^-$}
\\\hline
\end{tabular}
\vspace{3ex}
\caption{Simple Yetter-Drinfeld modules of one or two dimension. Here $g_s^{\pm}=x_{11}^{2s}\pm x_{12}^{2s}$, $h_s^{\pm}=x_{11}^{2s+1}\chi_{22}^{2n}
\pm\sqrt{\lambda} x_{12}^{2s+1}\chi_{21}^{2n}$.}
\label{YDMod1}
\end{table}

\section{Nichols algebras over $A_{N\,2n+1}^{\mu\lambda}$}
In this section, we investigate  Nichols algebras over  simple Yetter-Drinfeld modules
of $A_{N\,2n+1}^{\mu\lambda}$. So the Yetter-Drinfeld modules discussed in the section 
are those listed in the Theorem \ref{YDM}.
For the knowledge about Nichols algebras, please refer to 
\cite{andruskiewitsch2001pointed} \cite{Andruskiewitsch2017a} \cite{Andruskiewitsch2017}.

\subsection{Nichols algebras of diagonal type}
Let $V=\bigoplus_{i\in I}\k v_i$ be a vector space with a braiding $c(v_i\otimes v_j)=q_{ij}v_j\otimes v_i$, 
then the Nichols algebra $\mathfrak{B}(V)$ is  of diagonal type. 
Our results in this section heavily rely on Heckenberger's classification work  \cite{heckenberger2009classification}. To keep the article concise, we don't repeat this in the following proofs.  For more details about Nichols algebras of diagonal type, please consult \cite{Andruskiewitsch2017}. 

\begin{lemma}\label{NA_A}
Let $s\in\overline{1,N}$, $k\in\overline{0,2N-1}$, then 
\[
\dim\mathfrak{B}\left(\mathscr{A}_{k,p}^s\right)
=\left\{\begin{array}{ll} \infty, & N\mid ks,\\
   \frac N{(d,N)}, & ks\equiv d\mod N, d\in\overline{1,N-1}.
   \end{array}\right.
\]
\end{lemma}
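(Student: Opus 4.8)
The plan is to exploit the fact that $\mathscr{A}_{k,p}^s$ is one-dimensional, so its braiding is multiplication by a single scalar $q$ and the whole statement reduces to the structure of the \emph{quantum line}. Writing $\mathscr{A}_{k,p}^s=\k v$, the braided vector space $(\k v,c)$ with $c(v\otimes v)=q\,v\otimes v$ has Nichols algebra $\mathfrak{B}(\k v)=T(\k v)/\langle v^{\mathrm{ord}(q)}\rangle$ when $q\neq 1$ is a root of unity, so that $\dim\mathfrak{B}(\k v)=\mathrm{ord}(q)$, while $\mathfrak{B}(\k v)=\k[v]$ is infinite dimensional when $q=1$. Thus the entire lemma amounts to computing the braiding scalar $q$ and its multiplicative order.

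To read off $q$ I would use the Yetter-Drinfeld braiding $c(v\otimes v)=(v_{(-1)}\cdot v)\otimes v_{(0)}$. By Table \ref{YDMod1} the comodule structure of $\mathscr{A}_{k,p}^s$ is given by the grouplike $g_s^{\pm}=x_{11}^{2s}\pm x_{12}^{2s}$ (sign according to $p$), so $\rho(v)=g_s^{\pm}\otimes v$ and hence $q$ is the scalar by which $g_s^{\pm}$ acts on the module $V_k$. Since the Proposition gives $x_{11}\mapsto\omega^{2k(2n+1)}$ and $x_{12}\mapsto 0$ on $V_k$, raising to the relevant powers yields $x_{11}^{2s}\mapsto\omega^{4ks(2n+1)}$ and $x_{12}^{2s}\mapsto 0$, so that
\[
q=\omega^{4ks(2n+1)},
\]
independently of $p$: the sign in $g_s^{\pm}$ is annihilated precisely because $x_{12}$ acts as zero on $V_k$.

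It then remains to determine $\mathrm{ord}(q)$. As $\omega$ is a primitive $4(2n+1)N$-th root of unity,
\[
\mathrm{ord}(q)=\frac{4(2n+1)N}{\big(4(2n+1)ks,\,4(2n+1)N\big)}=\frac{N}{(ks,N)} .
\]
If $N\mid ks$ then $(ks,N)=N$, so $q=1$ and $\dim\mathfrak{B}(\mathscr{A}_{k,p}^s)=\infty$. Otherwise I write $ks\equiv d\bmod N$ with $d\in\overline{1,N-1}$; then $(ks,N)=(d,N)$, so $q$ has order $N/(d,N)\geq 2$ and $\dim\mathfrak{B}(\mathscr{A}_{k,p}^s)=N/(d,N)$, as claimed. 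I expect no genuine obstacle here beyond bookkeeping: the only point requiring care is correctly identifying the grouplike attached to the comodule and evaluating its eigenvalue on $V_k$, after which the result is the elementary order computation above, the quantum-line dichotomy being standard (and a trivial special case of the diagonal-type classification invoked throughout this subsection).
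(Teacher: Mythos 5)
Your proposal is correct and follows the same route as the paper, whose entire proof is the single line $c(w\otimes w)=\omega^{4ks(2n+1)}w\otimes w$; you merely make explicit the grouplike comodule structure, the vanishing of the $x_{12}^{2s}$-contribution on $V_k$, and the order computation $\mathrm{ord}(q)=N/(ks,N)=N/(d,N)$ that the paper leaves implicit.
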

\begin{remark}
When $N=1$, then $\dim\mathfrak{B}\left(\mathscr{A}_{k,p}^s\right)=\infty$.\\
When $N=2$, then 
$
\dim\mathfrak{B}\left(\mathscr{A}_{k,p}^s\right)
=\left\{\begin{array}{ll} 
   2, & s=1, k=1\,\text{or}\, 3,\\
   \infty, & otherwise.
   \end{array}\right.
$\\
When $N$ is a prime, then
$
\dim\mathfrak{B}\left(\mathscr{A}_{k,p}^s\right)
=\left\{\begin{array}{ll} 
   N, & s, k\nequiv 0\mod N,\\
   \infty, & otherwise.
   \end{array}\right.
$
\end{remark}
\begin{proof}
$c(w\otimes w)=\omega^{4ks(2n+1)}w\otimes w$.
\end{proof}

\begin{lemma}\label{NA_B} 
Let $s\in\overline{1,N}$, $k\in\overline{0,2N-1}$, $p\in\Bbb{Z}_2$,
\begin{enumerate}
\item when $\lambda=\mu=1$, then 
\begin{align*}
\dim\mathfrak{B}\left(\mathscr{B}_{k,p}^s\right)
&=\left\{\begin{array}{ll}
      \infty, & 2N\mid [pN+k(2s+1+2n)],\\
      \frac{2N}{(2N,d)}, & pN+k(2s+1+2n)\equiv d\mod 2N, d\in\overline{1,2N-1};
      \end{array}\right.
\end{align*}
\item when $\lambda=-1$, $\mu=1$, then
\begin{align*}
\dim\mathfrak{B}\left(\mathscr{B}_{k,p}^s\right)
&=\left\{\begin{array}{ll}
      \infty, & 4N\mid [N(2p+2n+1)+2k(2s+1+2n)],\\
      \frac{4N}{(4N,d)}, & [N(2p+2n+1)+2k(2s+1+2n)]\equiv d\mod 4N, 
      \end{array}\right.
\end{align*}
where $d\in\overline{1,4N-1}$;
\item when $\lambda=1$, $\mu=-1$, then
\begin{align*}
\dim\mathfrak{B}\left(\mathscr{B}_{k,p}^s\right)
&=\left\{\begin{array}{ll}
      \infty, & 4N\mid [2Np+(2s+1+2n)(2k+1)],\\
      \frac{4N}{(4N,d)}, & [2Np+(2s+1+2n)(2k+1)]\equiv d\,\,\mathrm{mod}\,\, 4N, d\in\overline{1,4N-1};
      \end{array}\right.
\end{align*}
\item when $\lambda=\mu=-1$, then 
\begin{align*}
\dim\mathfrak{B}\left(\mathscr{B}_{k,p}^s\right)
&=\left\{\begin{array}{ll}
      \infty, & 4N\mid [N(2p+2n+1)+(2s+1+2n)(2k+1)],\\
      \frac{4N}{(4N,d)}, & [N(2p+2n+1)+(2s+1+2n)(2k+1)]\equiv d\mod 4N, 
      \end{array}\right.
\end{align*}
where $d\in\overline{1,4N-1}$.
\end{enumerate}
\end{lemma}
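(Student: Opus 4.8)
The plan is to compute the self-braiding of the one-dimensional Yetter-Drinfeld module $\mathscr{B}_{k,p}^s$ exactly as in the proof of Lemma \ref{NA_A}, and then read off the dimension of $\mathfrak{B}\left(\mathscr{B}_{k,p}^s\right)$ from the order of the resulting scalar. For a one-dimensional braided vector space $\k w$ with $c(w\otimes w)=q\,w\otimes w$, the Nichols algebra is the truncated polynomial algebra $\k[w]/(w^{\mathrm{ord}(q)})$ when $q$ is a root of unity $\neq 1$, so $\dim\mathfrak{B}=\mathrm{ord}(q)$, while $\dim\mathfrak{B}=\infty$ precisely when $q=1$. Thus the entire statement reduces to identifying the scalar $q$ and computing its order.

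First I would locate the explicit module/comodule structure of $\mathscr{B}_{k,p}^s$ in the Appendix: as a module it is $V_k^\prime$, and as a comodule it sits in $\k h_s^+$ or $\k h_s^-$ according to $p=0$ or $p=1$, where $h_s^{\pm}=x_{11}^{2s+1}\chi_{22}^{2n}\pm\sqrt{\lambda}\,x_{12}^{2s+1}\chi_{21}^{2n}$ as recorded in Table \ref{YDMod1}. The braiding on a one-dimensional Yetter-Drinfeld module is $c(w\otimes w)=(g\cdot w)\otimes w$ where $g$ is the grouplike giving the coaction $\rho(w)=g\otimes w$; so I would compute $q$ by acting with the relevant grouplike $h_s^{\pm}$ on $w$ through the $V_k^\prime$-action of the Proposition. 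Using $x_{12}\mapsto \omega^{2k(2n+1)}\tilde\mu$, $x_{21}\mapsto \omega^{2k(2n+1)}\tilde\mu\lambda$, $x_{11},x_{22}\mapsto 0$, and substituting the definitions of $\tilde\mu$, $\bar\mu$ for the two values of $\mu$, the scalar $q$ comes out as a power of $\omega$ whose exponent is exactly the linear form in $p,k,s,n,N$ appearing in each of the four cases.

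The four cases are forced by the four choices of $(\lambda,\mu)\in\{\pm1\}^2$, since $\tilde\mu$ and $\sqrt\lambda$ enter the computation and $\omega$ is a primitive $4(2n+1)N$-th root of unity. Once $q=\omega^{E}$ with $E$ the stated exponent, the order of $q$ is $M/(M,E)$ where $M=4(2n+1)N$ is the order of $\omega$; here the key bookkeeping is that the displayed modulus ($2N$ in case (1), $4N$ in cases (2)--(4)) already absorbs the factor $2n+1$, so that $\mathrm{ord}(q)$ simplifies to $\frac{2N}{(2N,d)}$ respectively $\frac{4N}{(4N,d)}$ with $d$ the reduced exponent. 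I would verify in each case that the condition ``$q=1$'', i.e. the modulus divides $E$, matches the stated divisibility condition giving $\dim\mathfrak{B}=\infty$.

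The main obstacle is not conceptual but arithmetic: correctly tracking the contributions of $\tilde\mu=\omega^{2n+1}$ (when $\mu=-1$) and of $\sqrt\lambda$ through the product $x_{11}^{2s+1}\chi_{22}^{2n}\pm\sqrt\lambda\,x_{12}^{2s+1}\chi_{21}^{2n}$ acting on $w$, and then showing that the raw exponent (a multiple of the exponent in case (1) by a factor of $2$, or shifted by the $N(2n+1)$-type terms coming from $\tilde\mu$) reduces modulo $M$ to the advertised linear form. In particular I would be careful that the sign $(-1)^p$ coming from $h_s^\pm$ is correctly converted into the additive $pN$ or $N(2p+2n+1)$ shift in the exponent, using that $\omega^{2N(2n+1)}=-1$. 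After that reconciliation the formula for each case follows immediately from $\dim\mathfrak{B}=\mathrm{ord}(q)$.
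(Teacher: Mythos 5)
Your proposal is correct and follows essentially the same route as the paper: the paper's proof likewise computes the self-braiding scalar $q=(-1)^p\sqrt{\lambda}\,\lambda^n\tilde{\mu}^{2s+1+2n}\omega^{2k(2n+1)(2s+1+2n)}$ from the $h_s^{\pm}$-coaction acting through $V_k^\prime$ (only the $x_{12}^{2s+1}\chi_{21}^{2n}$ term survives since $x_{11},x_{22}$ act by zero), rewrites it as $\omega^{(2n+1)E}$ in each of the four $(\lambda,\mu)$ cases using $\omega^{2N(2n+1)}=-1$, and reads off $\dim\mathfrak{B}=\mathrm{ord}(q)$. Your arithmetic plan, including the cancellation of the factor $2n+1$ against the order $4N(2n+1)$ of $\omega$, matches the paper's computation exactly.
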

\begin{remark}
When $N=1$, then
\begin{align*}
\dim\mathfrak{B}\left(\mathscr{B}_{k,p}^s\right)
&=\left\{\begin{array}{ll}
2, &\mu= \lambda=1, s=1, (k,p)=(0,1)\,\text{or}\, (1,0);\\\vspace{1mm}
4, &\mu=1, \lambda=-1, s=1, k\in\overline{0,1}, p\in\Bbb{Z}_2;\\\vspace{1mm}
4, &\mu=-1, \lambda=1, s=1, k\in\overline{0,1}, p\in\Bbb{Z}_2;\\\vspace{1mm}
2, &\mu=\lambda=-1, s=1, (k,p)=(0,1)\,\text{or}\, (1,0);\\\vspace{1mm}
\infty, &otherwise. \\
      \end{array}\right.
\end{align*}
\end{remark}
\begin{proof}
The braiding of $\mathfrak{B}\left(\mathscr{B}_{k,p}^s\right)$ is given by 
\begin{align*}
c(w\otimes w)
&=(-1)^p\sqrt{\lambda}\lambda^n\tilde{\mu}^{2s+1+2n}
     \omega^{2k(2n+1)(2s+1+2n)} w\otimes w\\
&=\left\{\begin{array}{ll}
     \omega^{2(2n+1)[pN+k(2s+1+2n)]} w\otimes w, & \lambda=\mu=1,\\
     \omega^{(2n+1)[N(2p+2n+1)+2k(2s+1+2n)]}w\otimes w, & \lambda=-1, \mu=1,\\
     \omega^{(2n+1)[2Np+(2s+1+2n)(2k+1)]}w\otimes w, & \lambda=1, \mu=-1,\\
     \omega^{(2n+1)[N(2p+1+2n)+(2s+1+2n)(2k+1)]}w\otimes w, & \lambda=\mu=-1.
     \end{array}\right.
\end{align*}
\end{proof}

\begin{lemma}\label{NA_C}
Denote 
         \[\alpha=2Nj(t+1)+4k(2n+1)(t+1+s),\quad 
         \beta=-4Nj(t+1)+8k(2n+1)(t+1+s),\]
then 
$\dim\mathfrak{B}\left(\mathscr{C}_{jk,p}^{st}\right)<\infty$
iff one of the following conditions holds. 
\begin{enumerate}
\item  $4N(2n+1)\nmid \alpha$, $4N(2n+1)\mid \beta$, Cartan type $A_1\times A_1$;
\item  $4N(2n+1)\nmid \alpha$, $4N(2n+1)\mid (\alpha+\beta)$, Cartan type $A_2$;
\item $\alpha\equiv 2N(2n+1)\mod 4N(2n+1)$, $\beta\nequiv 0\,\,\text{and}\,\,2N(2n+1)\mod 
          4N(2n+1)$, Super type ${\bf A}_{2}(q;\I_2)$;
\item $\alpha-2\beta\equiv 6\beta\equiv 2N(2n+1)\mod 4N(2n+1)$, $4\beta\nequiv 0\mod 4N(2n+1)$.
The Nichols algebras $\ufo(8)$, see \cite[Page 209]{Andruskiewitsch2017}.
\end{enumerate}
\end{lemma}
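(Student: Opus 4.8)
The plan is to first compute the braiding explicitly, read off the generalized Dynkin diagram, and then match it against Heckenberger's classification. Recall that $\mathscr{C}_{jk,p}^{st}=\k w_1\oplus\k w_2$ carries the module structure of $V_{jk}$ (on which $x_{12}$ and $x_{21}$ act as $0$) together with the comodule structure of $\Lambda_{s\,2t+2}$. Using the braiding $c(m\otimes n)=m_{(-1)}\cdot n\otimes m_{(0)}$ coming from \eqref{eq:action}--\eqref{eq:coaction}, every summand of the coaction carrying a factor $x_{12}^{2s}$, $\chi_{12}^{2t+2}$ or $\chi_{21}^{2t+2}$ acts as $0$ on $w_1,w_2$ (since $s\geq 1$ and each such word contains $x_{12}$ or $x_{21}$), so only the $x_{11}^{2s}\chi_{\bullet\bullet}^{2t+2}$ terms survive. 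First I would note that $x_{11}^{2s}=\omega^{4ks(2n+1)}\,\mathrm{id}$ and that $\chi_{11}^{2t+2}=(x_{11}x_{22})^{t+1}$, $\chi_{22}^{2t+2}=(x_{22}x_{11})^{t+1}$ are \emph{diagonal} matrices in the basis $\{w_1,w_2\}$. Hence the braiding is already of diagonal type in $\{w_1,w_2\}$, and a short computation yields $q_{11}=q_{22}=\omega^{\alpha}$ and $q_{12}q_{21}=\omega^{\beta}$, where $\omega$ is a primitive $4N(2n+1)$-th root of unity.

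With this in hand, $\mathfrak{B}(\mathscr{C}_{jk,p}^{st})$ is of diagonal type with a \emph{symmetric} rank-two generalized Dynkin diagram: both vertices carry the label $\omega^{\alpha}$ and the edge carries $\tilde q=\omega^{\beta}$. Because $q_{11}=q_{22}$, only those finite-type diagrams in Heckenberger's list \cite{heckenberger2009classification} that are invariant under exchanging the two vertices can occur, which drastically reduces the cases to be examined. I would then run through these symmetric rows and translate each into a congruence on $\alpha,\beta$ modulo $4N(2n+1)$, repeatedly using $\omega^{2N(2n+1)}=-1$.

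The case analysis then proceeds as follows. If the diagram is disconnected, i.e. $\tilde q=1$ (equivalently $4N(2n+1)\mid\beta$) with $q_{11}\neq 1$ (equivalently $4N(2n+1)\nmid\alpha$), each vertex is a finite $A_1$ and we get Cartan type $A_1\times A_1$, which is condition (1). If the diagram is connected with Cartan integers $a_{12}=a_{21}=-1$, the defining relation $\tilde q=q_{11}^{-1}$ becomes $\alpha+\beta\equiv 0$ while $q_{11}\neq 1$, giving Cartan type $A_2$, condition (2). The super type ${\bf A}_{2}(q;\I_2)$ corresponds to $q_{11}=q_{22}=-1$ with $\tilde q\neq\pm 1$; via $\omega^{2N(2n+1)}=-1$ this reads $\alpha\equiv 2N(2n+1)$ and $\beta\not\equiv 0,\,2N(2n+1)$, which is condition (3) (the exclusion $\beta\not\equiv 2N(2n+1)$ precisely removes the overlap with the Cartan $A_2$ point $\tilde q=-1$). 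Finally, the row of the $\ufo(8)$ diagram is characterized by $\tilde q$ being a primitive $12$-th root of unity and $q_{11}=-\tilde q^{\,2}$; rewriting $\tilde q^{\,6}=-1$, $\tilde q^{\,4}\neq 1$ and $q_{11}\tilde q^{\,-2}=-1$ in terms of $\omega$ yields $6\beta\equiv 2N(2n+1)$, $4\beta\not\equiv 0$ and $\alpha-2\beta\equiv 2N(2n+1)$, which is condition (4).

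The main obstacle is the completeness step underpinning the second and third paragraphs: one must verify that these four symmetric diagrams are the \emph{only} finite-type possibilities compatible with the admissible values of $\alpha$ and $\beta$, i.e. that no other finite-dimensional rank-two diagram (Cartan $B_2$, $G_2$, or the remaining super and $\ufo$ types) can arise, and that the stated congruences are mutually consistent and disjoint except where the enumeration allows. This requires carefully scanning Heckenberger's table under the constraint $q_{11}=q_{22}$ and checking, for each discarded row, that this symmetry forces a contradiction with its prescribed vertex/edge labels. By contrast, the braiding computation itself and the modular bookkeeping with $\omega^{2N(2n+1)}=-1$ are entirely routine.
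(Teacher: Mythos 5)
Your proposal follows essentially the same route as the paper: the paper's proof consists precisely of computing the diagonal braiding (which indeed gives $q_{11}=q_{22}=\omega^{\alpha}$ and $q_{12}q_{21}=\omega^{\beta}$, matching your formulas) and then silently invoking Heckenberger's classification, exactly as announced at the start of the subsection. Your additional explicit matching of the four symmetric rank-two diagrams (disconnected, Cartan $A_2$, $(-1,\tilde q,-1)$, and $(-\zeta^2,\zeta,-\zeta^2)$ with $\zeta\in\Bbb{G}_{12}$) to the stated congruences is consistent with the lemma and merely spells out what the paper leaves implicit.
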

\begin{proof}
The braiding of $\mathfrak{B}\left(\mathscr{C}_{jk,p}^{st}\right)$ is given by
\begin{align*}
c(w_1\otimes w_1)
&=\omega^{[2jN+4k(2n+1)](t+1)+4k(2n+1)s}w_1\otimes w_1\\
c(w_1\otimes w_2)
&=\omega^{[4k(2n+1)-2jN](t+1)+4k(2n+1)s}w_2\otimes w_1\\
c(w_2\otimes w_1)
&=\omega^{[4k(2n+1)-2jN](t+1)+4k(2n+1)s}w_1\otimes w_2\\
c(w_2\otimes w_2)
&=\omega^{[4k(2n+1)+2jN](t+1)+4k(2n+1)s}w_2\otimes w_2.
\end{align*}
\end{proof}

\begin{lemma}\label{NA_D}
Denote 
         \[\alpha=-2Nj(t+1)+4k(2n+1)(t+1+s),\quad
         \beta=4Nj(t+1)+8k(2n+1)(t+1+s),\]
then 
$
\dim\mathfrak{B}\left(\mathscr{D}_{jk,p}^{st}\right)<\infty
$
iff one of the following conditions holds.
\begin{enumerate}
\item  $4N(2n+1)\nmid \alpha$, $4N(2n+1)\mid \beta$, Cartan type $A_1\times A_1$;
\item  $4N(2n+1)\nmid \alpha$, $4N(2n+1)\mid (\alpha+\beta)$, Cartan type $A_2$;
\item $\alpha\equiv 2N(2n+1)\mod 4N(2n+1)$, $\beta\nequiv 0\,\,\text{and}\,\,2N(2n+1)\mod 
          4N(2n+1)$, Super type ${\bf A}_{2}(q;\I_2)$;
\item $\alpha-2\beta\equiv 6\beta\equiv 2N(2n+1)\mod 4N(2n+1)$, $4\beta\nequiv 0\mod 4N(2n+1)$.
The Nichols algebras $\ufo(8)$, see \cite[Page 209]{Andruskiewitsch2017}.
\end{enumerate}
\end{lemma}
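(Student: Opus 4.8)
The plan is to reduce the statement to Heckenberger's classification of finite-dimensional rank-two Nichols algebras of diagonal type, following the same scheme as in Lemma~\ref{NA_C}. Since $\mathscr{D}_{jk,p}^{st}$ is two-dimensional and its braiding is diagonal, the isomorphism class of $\mathfrak{B}\big(\mathscr{D}_{jk,p}^{st}\big)$---and in particular its finiteness---is governed entirely by the braiding matrix $(q_{ij})_{i,j\in\{1,2\}}$, or more precisely by the triple $(q_{11},q_{22},q_{12}q_{21})$ that labels the generalized Dynkin diagram.

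First I would compute this braiding explicitly. Writing the left coaction as $\rho(w_i)=(w_i)_{(-1)}\otimes(w_i)_{(0)}$, the braiding in ${}^H_H\mathcal{YD}$ is $c(w_i\otimes w_j)=(w_i)_{(-1)}\cdot w_j\otimes(w_i)_{(0)}$. Reading the coaction off the comodule $\Lambda_{s\,2t+2}$ and the action off $V_{jk}$ (as recorded in the Proposition and the Appendix), the cross terms carrying $x_{12}^{2s}$ vanish because $x_{12}$ acts by zero on $V_{jk}$, so the braiding is indeed diagonal. Evaluating $x_{11}^{2s}$ and the alternating words $\chi_{11}^{2t+2},\chi_{22}^{2t+2}$ on the two basis vectors (using $x_{11}^2=\omega^{4k(2n+1)}\,\mathrm{Id}$ and the diagonalization of $x_{11}x_{22}$) yields $q_{11}=q_{22}=\omega^{\alpha}$ and $q_{12}q_{21}=\omega^{\beta}$, with $\alpha,\beta$ as in the statement. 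This is the mirror of the computation for $\mathscr{C}_{jk,p}^{st}$, the only change being the sign of every $jN$-contribution. The decisive structural feature is the symmetry $q_{11}=q_{22}$, which collapses the classification to a short list of admissible diagrams.

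With $q:=\omega^{\alpha}$ and $\widetilde q:=q_{12}q_{21}=\omega^{\beta}$, and recalling that $\omega$ has order $4N(2n+1)$ so that $\omega^{2N(2n+1)}=-1$, I would then match against Heckenberger's rank-two tables. The four admissible symmetric diagrams are: the disconnected one $\widetilde q=1$, $q\neq1$, giving Cartan type $A_1\times A_1$; the $A_2$ Cartan datum $q\widetilde q=1$, $q\neq1$; the super datum $q=-1$, $\widetilde q\neq\pm1$, giving ${\bf A}_{2}(q;\I_2)$; and the exceptional datum characterized by $q=-\widetilde q^{2}$ together with $\widetilde q$ a primitive twelfth root of unity, giving $\ufo(8)$. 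Translating these equalities and non-degeneracy conditions ($q\neq1$, $\widetilde q=1$, $q\widetilde q=1$, $q=-1$, $\widetilde q^{6}=-1$, $\widetilde q^{4}\neq1$) into congruences on $\alpha,\beta$ modulo $4N(2n+1)$ produces exactly the four displayed conditions.

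The main obstacle is not any single computation but the completeness and exclusivity of this case analysis. I must confirm that the constraint $q_{11}=q_{22}$, with both entries a power of the fixed root $\omega$, is incompatible with every remaining row of Heckenberger's list---in particular the Cartan $B_2$ and $G_2$ data and the other super and exceptional (ufo) rows, whose diagonal entries are generically unequal---so that any diagram outside cases (1)--(4) forces $\dim\mathfrak{B}\big(\mathscr{D}_{jk,p}^{st}\big)=\infty$. I would also need to check that the four cases are mutually exclusive and that the borderline values $\widetilde q=\pm1$ are assigned to the correct case, so that each generalized Dynkin diagram is accounted for exactly once.
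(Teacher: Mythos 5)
Your proposal follows the same route as the paper: compute the braiding of $\mathscr{D}_{jk,p}^{st}$ from the appendix data (the cross terms vanish since $x_{12},x_{21}$ act by zero on $V_{jk}$), observe that it is diagonal with $q_{11}=q_{22}=\omega^{\alpha}$ and $q_{12}q_{21}=\omega^{\beta}$, and then read off the four admissible generalized Dynkin diagrams from Heckenberger's rank-two classification. Your translation of the conditions (including $q=-\widetilde q^{2}$ with $\widetilde q$ a primitive twelfth root of unity for $\ufo(8)$) matches the stated congruences, so the argument is correct and essentially identical to the paper's.
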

\begin{proof}
The braiding of $\mathfrak{B}\left(\mathscr{D}_{jk,p}^{st}\right)$ is given by 
\begin{align*}
c(w_1\otimes w_1)
&=\omega^{[4k(2n+1)-2jN](t+1)+4k(2n+1)s}w_1\otimes w_1,\\
c(w_1\otimes w_2)
&=\omega^{[4k(2n+1)+2jN](t+1)+4k(2n+1)s}w_2\otimes w_1,\\
c(w_2\otimes w_1)
&=\omega^{[4k(2n+1)+2jN](t+1)+4k(2n+1)s}w_1\otimes w_2,\\
c(w_2\otimes w_2)
&=\omega^{[4k(2n+1)-2jN](t+1)+4k(2n+1)s}w_2\otimes w_2.
\end{align*}
\end{proof}

\begin{lemma}\label{NA_E}
Denote $q=\omega^{4ks(2n+1)}$, then $\mathfrak{B}\left(\mathscr{E}_{jk,p}^{s}\right)$ is finite dimensional iff one of the following conditions holds. 
\begin{enumerate}
\item  $q=-1$, Cartan type $A_1\times A_1$;
\item  $q^3=1\neq q$, Cartan type $A_2$.
\end{enumerate} 
\end{lemma}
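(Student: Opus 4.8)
The plan is to follow the same two-step pattern as the preceding lemmas: compute the braiding of $\mathscr{E}_{jk,p}^s$ explicitly, then read off the finite-dimensional cases from Heckenberger's rank-two classification.

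First I would write down the braiding matrix. By Table~\ref{YDMod1} the module $\mathscr{E}_{jk,p}^s$ carries the $A_{N\,2n+1}^{\mu\lambda}$-module structure of $V_{jk}$ and the comodule structure $\k g_s^+\oplus\k g_s^-$, so in the corresponding basis $\{v_1,v_2\}$ one has $\rho(v_1)=g_s^+\otimes v_1$ and $\rho(v_2)=g_s^-\otimes v_2$. Using $c(v_i\otimes v_j)=(v_i)_{(-1)}\cdot v_j\otimes (v_i)_{(0)}$, the braiding is entirely controlled by the action of the grouplikes $g_s^{\pm}=x_{11}^{2s}\pm x_{12}^{2s}$ on $V_{jk}$. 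Since $x_{12}$ acts as $0$ on $V_{jk}$ by the Proposition, both $g_s^{+}$ and $g_s^{-}$ reduce to $x_{11}^{2s}$ there; and $x_{11}^2=\omega^{4k(2n+1)}\mathrm{Id}$ on $V_{jk}$ gives $x_{11}^{2s}=\omega^{4ks(2n+1)}\mathrm{Id}=q\,\mathrm{Id}$. Hence the braiding collapses to the constant one $c(v_i\otimes v_j)=q\,v_j\otimes v_i$, i.e.\ $q_{ij}=q$ for all $i,j$, independently of the parameters $j$ and $p$.

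The resulting generalized Dynkin diagram therefore has both vertices labelled $q$ and single edge labelled $q_{12}q_{21}=q^2$. As $q=\omega^{4ks(2n+1)}$ is automatically a root of unity, finiteness forces $q\neq 1$, for otherwise the vertex $q=1$ produces a polynomial subalgebra. Writing $m=\mathrm{ord}(q)\geq 2$, I would evaluate the standard Cartan-entry formula and obtain $a_{12}=a_{21}=2-m$ together with $q^{\,2-m}=q^{2}=q_{12}q_{21}$, so that this braiding is automatically of Cartan type with the symmetric Cartan matrix $\left(\begin{smallmatrix}2&2-m\\2-m&2\end{smallmatrix}\right)$.

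Finally I would apply the finite-type criterion of Heckenberger's classification to this symmetric rank-two Cartan matrix: it is of finite type exactly when $2-m\in\{0,-1\}$, that is $m=2$ or $m=3$. The value $m=2$ means $q=-1$, the two vertices are disconnected, and $\mathfrak{B}(\mathscr{E}_{jk,p}^s)$ is of Cartan type $A_1\times A_1$; the value $m=3$ means $q^3=1\neq q$ and gives Cartan type $A_2$. For every $m\geq 4$ the matrix is of affine or indefinite type, so the Nichols algebra is infinite-dimensional. I expect the only step carrying genuine content to be the braiding computation---specifically the observation that $g_s^+$ and $g_s^-$ both act as the single scalar $q$ because $x_{12}$ annihilates $V_{jk}$. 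Once this collapse is recorded, the diagram is constant and the two listed cases drop out of the finite-type criterion for symmetric rank-two Cartan matrices; in particular the Super type and $\ufo(8)$ possibilities of Lemmas~\ref{NA_C} and~\ref{NA_D} cannot occur here, since they require unequal vertex labels.
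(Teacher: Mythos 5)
Your proposal is correct and follows essentially the same route as the paper: the paper's proof likewise computes that every braiding entry collapses to $x_{11}^{2s}\cdot(-)=\omega^{4ks(2n+1)}\,\mathrm{id}$ because $x_{12}$, $x_{21}$ annihilate $V_{jk}$, so $c=q\,\tau$, and then invokes Heckenberger's classification (which the paper cites once for the whole subsection rather than spelling out the Cartan-matrix computation as you do). The only cosmetic difference is that you diagonalize the coaction into $\k g_s^+\oplus\k g_s^-$ first, whereas the paper works directly in the appendix basis $\{w_1,w_2\}$; since the resulting braiding is a scalar multiple of the flip, this is immaterial.
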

\begin{proof}
The braiding of $\mathfrak{B}\left(\mathscr{E}_{jk,p}^{s}\right)$ is given by
\begin{align*}
c(w_1\otimes w_1)
&=x_{11}^{2s}\cdot w_1\otimes w_1
=\omega^{4k(2n+1)s}w_1\otimes w_1\\
c(w_1\otimes w_2)
&=x_{11}^{2s}\cdot w_2\otimes w_1
=\omega^{4k(2n+1)s}w_2\otimes w_1\\
c(w_2\otimes w_1)
&=x_{11}^{2s}\cdot w_1\otimes w_2
=\omega^{4k(2n+1)s}w_1\otimes w_2\\
c(w_2\otimes w_2)
&=x_{11}^{2s}\cdot w_2\otimes w_2
=\omega^{4k(2n+1)s}w_2\otimes w_2
\end{align*}
\end{proof}

\begin{lemma}\label{NA_F}
Denote $q=\omega^{2k(2n+1)(2s+2t+1)}$, then 
\[
\dim \mathfrak{B}\left(\mathscr{F}_{k,p}^{st}\right)
=\left\{\begin{array}{ll}
4, & q=-1, (\text{Cartan type $A_1\times A_1$}),\\
27, & q^3=1\neq q, (\text{Cartan type $A_2$}),\\
\infty, & \text{otherwise}.
\end{array}\right.
\]
\end{lemma}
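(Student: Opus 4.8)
The plan is to reduce the statement to the diagonal braiding of the rank-two module $\mathscr{F}_{k,p}^{st}=\k w_1\oplus\k w_2$ and then read off everything from Heckenberger's classification \cite{heckenberger2009classification}, exactly as in the proofs of Lemmas \ref{NA_C}--\ref{NA_E}. First I would compute the braiding $c(w_i\otimes w_j)=(w_i)_{(-1)}\cdot w_j\otimes (w_i)_{(0)}$ directly from the Yetter--Drinfeld structure, taking the coaction from the comodule $\Lambda_{s\,2t+2}$ and the twisted module structure coming from $V_k\oplus V_d$ with $k+N\equiv d\bmod 2N$. The decisive simplification is that on $\mathscr{F}_{k,p}^{st}$ both $x_{12}$ and $x_{21}$ act as zero, so the off-diagonal coaction terms $x_{12}^{2s}\chi_{12}^{2t+2}$ and $x_{12}^{2s}\chi_{21}^{2t+2}$ are annihilated and only the grouplike pieces $x_{11}^{2s}\chi_{11}^{2t+2}$, $x_{11}^{2s}\chi_{22}^{2t+2}$ survive, acting by scalars. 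Since $\omega$ has order $4N(2n+1)$ we have $\omega^{4N(2n+1)}=1$, so the eigenvalues attached to $V_k$ and to $V_d=V_{k+N}$ coincide after reduction; consequently the braiding is not just diagonal but \emph{uniform},
\[
c(w_i\otimes w_j)=q\,w_j\otimes w_i\quad(i,j\in\{1,2\}),\qquad q=\omega^{2k(2n+1)(2s+2t+1)}.
\]
This uniformity is precisely what forces the answer to involve only two Cartan types, in contrast with the four cases appearing in Lemmas \ref{NA_C} and \ref{NA_D}.

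With a uniform braiding the generalized Dynkin diagram has two vertices each labelled $q$ and a single edge labelled $q_{12}q_{21}=q^2$, so the diagram is determined by $q$ alone. If $q^2=1$, i.e. $q=-1$, the edge is trivial and the two vertices decouple into Cartan type $A_1\times A_1$; if $q$ is a primitive cube root of unity, then $q^2=q^{-1}$ yields Cartan integers $a_{12}=a_{21}=-1$ and a connected diagram of Cartan type $A_2$. For every other value of $q$ (including $q=1$, where the braiding is symmetric and $\mathfrak{B}$ is a polynomial algebra) Heckenberger's list contains no finite-dimensional Nichols algebra, whence $\dim\mathfrak{B}\left(\mathscr{F}_{k,p}^{st}\right)=\infty$.

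It remains to record the two finite dimensions. In the $A_1\times A_1$ case both diagonal entries equal $-1$, so the defining relations are $w_1^2=w_2^2=0$ together with a single $q$-commutation relation between $w_1$ and $w_2$; hence $\mathfrak{B}\left(\mathscr{F}_{k,p}^{st}\right)$ has basis $\{1,w_1,w_2,w_1w_2\}$ and dimension $4$. In the $A_2$ case with $q$ a primitive cube root of unity, $\mathfrak{B}\left(\mathscr{F}_{k,p}^{st}\right)$ is the positive part of the small quantum group $u_q(\mathfrak{sl}_3)$; it carries one root vector for each of the three positive roots of $A_2$, each of order $3$, so $\dim=3^3=27$.

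The step I expect to be the main obstacle is the first one: verifying that the twisted module action on $\mathscr{F}_{k,p}^{st}$ produces exactly the uniform scalar $q=\omega^{2k(2n+1)(2s+2t+1)}$. This requires carefully tracking the conjugation $h_{(1)}gS(h_{(3)})$ of \eqref{eq:action} together with the coaction \eqref{eq:coaction} on the explicit basis from the appendix, since it is this twist (rather than the naive action on $V_k\oplus V_d$) that fixes the precise exponent. Once the braiding is shown to be uniform, the identification of the Cartan types and the two dimension counts are immediate.
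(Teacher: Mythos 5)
Your proposal is correct and takes essentially the same route as the paper: the paper's proof likewise consists of computing the braiding explicitly, finding $c(w_i\otimes w_j)=\omega^{2k(2n+1)(2s+2t+1)}w_j\otimes w_i$ for all $i,j$, and then (implicitly, as announced at the start of the subsection) reading off the three cases from Heckenberger's classification; your spelled-out Dynkin-diagram analysis and dimension counts are exactly what the paper leaves tacit. One small slip in your justification: the eigenvalues of a single generator on $V_k$ and on $V_d=V_{k+N}$ do \emph{not} coincide --- they differ by $\omega^{2N(2n+1)}=-1$ --- but the elements that actually coact, namely $x_{11}^{2s+1}\chi_{22}^{2t+1}$ and $x_{11}^{2s}\chi_{22}^{2t+2}$, are products of an even number $2s+2t+2$ of generators, so the sign cancels and the braiding is uniform as you claim.
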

\begin{proof}
The braiding of $\mathfrak{B}\left(\mathscr{F}_{k,p}^{st}\right)$ is given by
\begin{align*}
c(w_1\otimes w_1)
&=x_{11}^{2s+1}\chi_{22}^{2t+1}\cdot w_1\otimes w_1
=\omega^{2k(2n+1)(2s+2t+1)}w_1\otimes w_1\\
c(w_1\otimes w_2)
&=x_{11}^{2s+1}\chi_{22}^{2t+1}\cdot w_2\otimes w_1
=\omega^{2k(2n+1)(2s+2t+1)}w_2\otimes w_1\\
c(w_2\otimes w_1)
&=x_{11}^{2s}\chi_{22}^{2t+2}\cdot w_1\otimes w_2
=\omega^{2k(2n+1)(2s+2t+1)}w_1\otimes w_2\\
c(w_2\otimes w_2)
&=x_{11}^{2s}\chi_{22}^{2t+2}\cdot w_2\otimes w_2
=\omega^{2k(2n+1)(2s+2t+1)}w_2\otimes w_2.
\end{align*}
\end{proof}

\subsection{Nichols algebra of type $V_{abe}$}\label{typeVabe}
Let $V_{abe}=\k v_1\otimes \k v_2$ be a vector space with a braiding given by 
\begin{align*}
c(v_1\otimes v_1)&=a v_2\otimes v_2,\quad 
&c(v_1\otimes v_2)&=b   v_1\otimes v_2,\\
c(v_2\otimes v_1)&=b v_2\otimes v_1,\quad 
&c(v_2\otimes v_2)&=e   v_1\otimes v_1,
\end{align*}
then the braided vector space $(V_{abe}, c)$ is of type $V_{abe}$. The braided vector space $V_{abe}$ is isomorphic to  $V_{ae\,b\,1}$ via $v_1\mapsto \sqrt{e}v_1$, $v_2\mapsto v_2$. When $ae=b^2$, then 
$V_{abe}$ is of diagonal type and
\[
\dim \mathfrak{B}(V_{abe})
=\left\{\begin{array}{ll}
4, & b=-1, (\mathfrak{B}(V_{abe}) \,\,\,\text{is of Cartan type $A_1\times A_1$}),\\
27, & b^3=1\neq b, (\mathfrak{B}(V_{abe})\,\,\, \text{is of Cartan type $A_2$}),\\
\infty, & \text{otherwise}.
\end{array}\right.
\]

When $ae\neq b^2$, according to 
 \cite[section 3.7]{Andruskiewitsch2018} and  \cite{Shi2020even}, we have
\begin{align}\label{fomulaeVabe}
\dim\mathfrak{B}(V_{abe})
=\left\{\begin{array}{ll}
4m, &b=-1, ae\in\Bbb{G}_m,\\
m^2, &ae=1, b\in\Bbb{G}_m\,\,\text{for}\,\,m\geq 2,\\
\infty, & b^2=(ae)^{-1},  b\in\Bbb{G}_{m}\,\, \text{for}\,\, m\geq 5,\\
\infty,   & b\notin \Bbb{G}_m \,\,\text{for}\,\,m\geq 2,\\
\text{unknown}, & otherwise, 
\end{array}\right.
\end{align}
where $\Bbb{G}_m$ denotes the set of $m$-th primitive roots of unity. 

\begin{lemma}\label{NA_G}
Denote 
$q=(-1)^p\lambda^{t+\frac32}\tilde{\mu}^{2s+2t+1}\omega^{2k(2n+1)(2s+2t+1)}$, 
then 
\[
\dim \mathfrak{B}\left(\mathscr{G}_{k,p}^{st}\right)
=\left\{\begin{array}{ll}
4, & q=-1, (\text{Cartan type $A_1\times A_1$}),\\
27, & q^3=1\neq q, (\text{Cartan type $A_2$}),\\
\infty, & \text{otherwise}.
\end{array}\right.
\]
\end{lemma}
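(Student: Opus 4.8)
The plan is to follow the template of the preceding lemmas: extract the module and comodule structure of $\mathscr{G}_{k,p}^{st}$ from the Appendix, compute the braiding through the Yetter--Drinfeld formula $c(x\otimes y)=x_{(-1)}\cdot y\otimes x_{(0)}$, and then read off the dimension from the $V_{abe}$ machinery already set up in this subsection. First I would recall that, as an $A_{N\,2n+1}^{\mu\lambda}$-module, $\mathscr{G}_{k,p}^{st}$ is built on $V_k^\prime\oplus V_d^\prime$ with $k+N\equiv d\bmod 2N$, and that its comodule structure is $\Lambda_{s\,2t+1}$, so on the basis $\{w_1,w_2\}$ the coaction is $\rho(w_1)=x_{11}^{2s}\chi_{11}^{2t+1}\otimes w_1+x_{12}^{2s}\chi_{12}^{2t+1}\otimes w_2$ and $\rho(w_2)=x_{11}^{2s}\chi_{22}^{2t+1}\otimes w_2+x_{12}^{2s}\chi_{21}^{2t+1}\otimes w_1$.

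Next I would feed this into the braiding. Because $x_{11}$ and $x_{22}$ annihilate $V_k^\prime$ and $V_d^\prime$, every term of the coaction carrying a factor $\chi_{11}$ or $\chi_{22}$ drops out, and only the monomials $x_{12}^{2s}\chi_{12}^{2t+1}$ and $x_{12}^{2s}\chi_{21}^{2t+1}$ contribute. On a one-dimensional $V_k^\prime$-block $x_{12}$ acts by $\omega^{2k(2n+1)}\tilde{\mu}$ and $x_{21}$ by $\omega^{2k(2n+1)}\tilde{\mu}\lambda$; since $\chi_{12}^{2t+1}$ contains $t+1$ factors $x_{12}$ and $t$ factors $x_{21}$ (and symmetrically for $\chi_{21}^{2t+1}$), collecting the $2s$ factors from $x_{12}^{2s}$ turns each braiding coefficient into a monomial in $\omega^{2k(2n+1)}$, $\tilde{\mu}$ and $\lambda$. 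The remaining sign $(-1)^p$ and the half-integer power $\sqrt{\lambda}$ enter through the normalization of $w_1,w_2$ inherited from $h_s^{\pm}$ and through the $\chi_{12}$/$\chi_{21}$ asymmetry. I expect the upshot to be a braided vector space of type $V_{abe}$ whose parameters satisfy $b=q$ and $ae=b^2$, with $q$ exactly the scalar displayed in the statement.

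Once the braiding is identified as $V_{abe}$ with $ae=b^2$, the result is immediate from the diagonal formula recorded at the start of this subsection. Indeed, the associated diagonal diagram then has both vertices labelled $q$ and its single edge labelled $q^2$, so by Heckenberger's classification $\dim\mathfrak{B}(\mathscr{G}_{k,p}^{st})$ is finite precisely when $q=-1$, where the diagram disconnects into Cartan type $A_1\times A_1$ of dimension $2\cdot 2=4$, or when $q$ is a primitive cube root of unity, where it is Cartan type $A_2$ of dimension $3^3=27$; in all other cases it is infinite. This is the same conclusion as Lemma \ref{NA_F}, the only difference being the extra scalar $(-1)^p\lambda^{t+\frac{3}{2}}\tilde{\mu}^{2s+2t+1}$ forced by the odd comodule index $2t+1$ and by working over $V_k^\prime\oplus V_d^\prime$ instead of $V_k\oplus V_d$.

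The main obstacle is the scalar bookkeeping of the second step: correctly propagating the sign $(-1)^p$ and the half-integer exponent $\lambda^{t+\frac{3}{2}}$ through the products $x_{12}^{2s}\chi_{12}^{2t+1}$ and $x_{12}^{2s}\chi_{21}^{2t+1}$, and then verifying the identity $ae=b^2$, which is what guarantees that the a priori non-diagonal braiding of type $V_{abe}$ actually falls into the diagonal case so that the clean trichotomy applies. After that is settled and $q$ is pinned down, nothing beyond the already-cited classification is required.
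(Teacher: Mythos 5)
Your proposal follows essentially the same route as the paper: the paper's proof simply records that $\mathfrak{B}\left(\mathscr{G}_{k,p}^{st}\right)$ is of type $V_{abe}$ with $a=b=e=q$ (so that $ae=b^2$ and the diagonal trichotomy from the start of the subsection applies with $b=q$), which is exactly the identification you arrive at after your braiding computation. The only caveat is a cosmetic one — the coaction coefficient on $w_1$ is $x_{11}^{2s}\chi_{22}^{2t+1}$ rather than $x_{11}^{2s}\chi_{11}^{2t+1}$, and the scalars $(-1)^p\sqrt{\lambda}$ you defer to "bookkeeping" are precisely what the paper's stated value of $q$ encodes — but the argument is correct.
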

\begin{proof}
$\mathfrak{B}\left(\mathscr{G}_{k,p}^{st}\right)$
is of type $V_{abe}$ with 
$a=b=e=(-1)^p\lambda^{t+\frac32}\tilde{\mu}^{2s+2t+1}\omega^{2k(2n+1)(2s+2t+1)}$.
\end{proof}

\begin{lemma}\label{NA_H} 
$\mathfrak{B}\left(\mathscr{H}_{jk,p}^{st}\right)$
is of type $V_{abe}$ with 
$ae=\lambda\bar{\mu}^{2s+2t+1}\omega^{2k(2n+1)(4s+4t+2)+jN(4t+2)}$
and $b=(-1)^p\lambda^{t+\frac12}\bar{\mu}^{s+t+\frac12}
     \omega^{2k(2n+1)(2s+2t+1)-jN(2t+1)}$.
\end{lemma}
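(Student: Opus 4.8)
The plan is to read off the braiding of $\mathscr{H}_{jk,p}^{st}$ from its Yetter--Drinfeld structure and then match it against the template $V_{abe}$ of Subsection \ref{typeVabe}. Recall from the appendix that $\mathscr{H}_{jk,p}^{st}=\k w_1\oplus\k w_2$ carries the module structure of $V_{jk}^\prime$, while as a comodule it becomes $\Lambda_{s\,2t+1}$ only after the rescaling recorded in the list, namely $\tilde{w}_1=(-1)^p\sqrt{\lambda\bar\mu}\,\omega^{2k(2n+1)-jN}w_1$. Setting $\eta=(-1)^p\sqrt{\lambda\bar\mu}\,\omega^{2k(2n+1)-jN}$ and transporting the coaction of $\Lambda_{s\,2t+1}$ back to the module basis $\{w_1,w_2\}$, one gets
\begin{align*}
\rho(w_1)&=x_{11}^{2s}\chi_{22}^{2t+1}\otimes w_1+\eta^{-1}\,x_{12}^{2s}\chi_{21}^{2t+1}\otimes w_2,\\
\rho(w_2)&=x_{11}^{2s}\chi_{11}^{2t+1}\otimes w_2+\eta\,x_{12}^{2s}\chi_{12}^{2t+1}\otimes w_1.
\end{align*}
I would then apply the braiding formula $c(u\otimes v)=u_{(-1)}\cdot v\otimes u_{(0)}$ to the four pairs $c(w_i\otimes w_j)$.

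The first observation that organizes the computation is that $x_{11}$ and $x_{22}$ act by $0$ on $V_{jk}^\prime$, so every coaction coefficient containing $x_{11}$ or $x_{22}$ (that is, $x_{11}^{2s}\chi_{11}^{2t+1}$ and $x_{11}^{2s}\chi_{22}^{2t+1}$) annihilates the module. Hence only the coefficients $x_{12}^{2s}\chi_{12}^{2t+1}$ and $x_{12}^{2s}\chi_{21}^{2t+1}$ survive. Each is a word of odd length $2s+2t+1$ in the letters $x_{12},x_{21}$, both of which act by strictly off-diagonal $2\times 2$ matrices; an odd product of off-diagonal matrices is again off-diagonal, hence interchanges $w_1$ and $w_2$. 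This parity is exactly what forces the braiding into the shape $c(w_1\otimes w_1)=a\,w_2\otimes w_2$, $c(w_1\otimes w_2)=b\,w_1\otimes w_2$, $c(w_2\otimes w_1)=b\,w_2\otimes w_1$, $c(w_2\otimes w_2)=e\,w_1\otimes w_1$ of type $V_{abe}$, and is the conceptual reason $\mathscr{H}_{jk,p}^{st}$ is of this type rather than of diagonal type.

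It remains to extract the scalars, which is the computational heart. I would evaluate the two surviving words by marching through the alternating pattern $x_{12}x_{21}x_{12}\cdots$, using that each adjacent pair acts diagonally with a fixed factor $\lambda\bar\mu\,\omega^{4k(2n+1)\pm 2jN}$, while the single leftover off-diagonal letter produces the swap, and the remaining $x_{12}^{2s}$ contributes $\bar\mu^{s}\omega^{4k(2n+1)s}$. Collecting exponents yields $c(w_1\otimes w_1)=\eta^{-1}\nu\,w_2\otimes w_2$ and $c(w_2\otimes w_2)=\eta\,\tau\,w_1\otimes w_1$ with $\nu,\tau$ carrying only integer powers of $\lambda,\bar\mu,\omega$. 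The crucial cancellation is that $ae=\eta^{-1}\nu\cdot\eta\,\tau=\nu\tau$ is independent of $\eta$, so no sign $(-1)^p$ and no half-integer powers enter $ae$, in agreement with the statement; by contrast each of $c(w_1\otimes w_2)$ and $c(w_2\otimes w_1)$ retains a single factor $\eta^{\pm1}$, and checking that these two determinations of $b$ coincide (the $V_{abe}$ consistency) while simplifying $\eta^{\pm1}$ is precisely what introduces the sign $(-1)^p$ and the fractional powers $\lambda^{t+\frac12}\bar\mu^{s+t+\frac12}$ recorded in $b$.

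The main obstacle is purely the exponent bookkeeping through these matrix words: one must count correctly how many factors of $\bar\mu\,\omega^{4k(2n+1)}$, $\lambda\,\omega^{2jN}$ and $\lambda\bar\mu\,\omega^{4k(2n+1)-2jN}$ accumulate along a word of length $2s+2t+1$, and then absorb the normalization $\eta^{\pm1}$ without error. A small but essential simplification I would use is $\lambda^2=1$ (since $\lambda=\pm1$), which collapses the a priori power $\lambda^{2t+1}$ arising in $ae$ to the stated $\lambda$, whereas the powers of $\bar\mu$, a genuine root of unity, must be kept intact; matching these against $\bar\mu^{2s+2t+1}$ and $\bar\mu^{s+t+\frac12}$ is the final consistency check completing the proof.
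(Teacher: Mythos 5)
Your proposal is correct and follows the approach the paper intends: the paper states Lemma \ref{NA_H} without proof, and the implicit argument (as in the neighbouring Lemmas \ref{NA_G} and \ref{NA_I}) is exactly the direct computation you describe — read off the coaction of $\mathscr{H}_{jk,p}^{st}$ from the appendix, use that $x_{11},x_{22}$ kill $V_{jk}^\prime$ so only the odd-length off-diagonal words survive, and collect the scalars into the $V_{abe}$ template. Your exponent bookkeeping checks out (in particular the two determinations of $b$ from $c(w_1\otimes w_2)$ and $c(w_2\otimes w_1)$ do coincide, and $\lambda^{2t+1}=\lambda$ gives the stated $ae$), so your write-up in fact supplies more detail than the paper records.
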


\begin{corollary}\label{Vabe}
Suppose $ae\neq b^2=(ae)^{-1}$ and $b\in\Bbb{G}_{m}$ for $m\geq 3$, then 
\[
\dim \mathfrak{B}(V_{abe})=\infty.
\]
\end{corollary}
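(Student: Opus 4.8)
The plan is to collapse the statement to a single residual case and then dispose of that case by the computation already underlying \eqref{fomulaeVabe}. First I would use the isomorphism $V_{abe}\cong V_{ae\,b\,1}$ to normalise $e=1$, so that $\mathfrak{B}(V_{abe})$ depends only on the pair $(ae,b)$; set $q=ae$. The two hypotheses say $q=b^{-2}$ and $q\neq b^{2}$, which together give $b^{4}\neq 1$. Since $b\in\Bbb{G}_{m}$, this forces $m\notin\{1,2,4\}$, and intersecting with $m\geq 3$ leaves only $m\geq 5$ and $m=3$. For $m\geq 5$ the conclusion is exactly the third line of \eqref{fomulaeVabe}, so there is nothing to do. Hence the entire content of the corollary is the single case $m=3$, in which $b$ is a primitive cube root of unity and $q=ae=b^{-2}=b$.

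For $m=3$ I would first write the braiding out: with $e=1$ and $a=b$ one has $c(v_{1}\otimes v_{1})=b\,v_{2}\otimes v_{2}$, $c(v_{2}\otimes v_{2})=v_{1}\otimes v_{1}$, and $c(v_{i}\otimes v_{j})=b\,v_{i}\otimes v_{j}$ for $i\neq j$. A direct check shows that no vector $u\otimes u$ is a $c$-eigenvector unless $b=1$, so $V_{abe}$ has no diagonal basis and Heckenberger's classification of diagonal type cannot be invoked; one must work with the non-diagonal braiding itself. The key observation I would exploit is that, after this normalisation, the braiding has the uniform eigenvalue pattern $\{b,b,b^{-1},-b^{-1}\}$ for every $b$ with $b^{4}\neq 1$ (eigenvalue $b$ on $\k\,(v_{1}\otimes v_{2})\oplus\k\,(v_{2}\otimes v_{1})$ and $\pm b^{-1}$ on $\k\,(v_{1}\otimes v_{1})\oplus\k\,(v_{2}\otimes v_{2})$). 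Thus the $m=3$ braiding is the specialisation of the \emph{same} one-parameter family treated for $m\geq 5$, with the generic three-distinct-eigenvalue pattern, and the computation of \cite[section 3.7]{Andruskiewitsch2018} and \cite{Shi2020even} establishing $\dim\mathfrak{B}=\infty$ should apply unchanged. Concretely I would rerun that computation with the braided symmetrisers $S_{n}$: since $q=b\neq 1$ and $b\neq -1$ the operator $\mathrm{id}+c$ is invertible on $V\otimes V$, so there are no quadratic relations, and the remaining point is to verify that the higher relations never cut the Hilbert series down to a polynomial.

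The hard part will be exactly this last verification at the smallest admissible order $m=3$. This is the regime where low-degree relations are most likely to force finiteness, and the non-diagonality removes the safety net of the Heckenberger tables, so infinite-dimensionality has to be confirmed directly: either by pushing the $S_{n}$ analysis far enough to exhibit a surviving infinite family, or by matching the braiding against the classification of finite-dimensional Nichols algebras of two-dimensional irreducible Yetter--Drinfeld modules and observing that $(ae,b)=(b,b)$ with $b\in\Bbb{G}_{3}$ is not among the finitely many finite-dimensional exceptions. Once that is settled the corollary follows, and it simultaneously closes the corresponding $V_{abe}$ branch arising from Lemma \ref{NA_H}.
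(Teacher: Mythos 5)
Your reduction to the single case $m=3$ is exactly the paper's first step (the hypotheses force $b^4\neq 1$, the case $m\geq 5$ is \cite[Corollary 4.15]{Shi2020even}), and your eigenvalue analysis showing $\mathrm{id}+c$ is invertible, hence no quadratic relations, is correct. But the proposal does not actually prove the $m=3$ case: you assert that the computation for $m\geq 5$ ``should apply unchanged'' and then concede that the real work is ``either'' to push the symmetrizer analysis far enough ``or'' to match the braiding against a classification --- without carrying out either option or even identifying which classification over which Hopf algebra would apply. If the $m\geq 5$ argument applied unchanged at $m=3$, Corollary 4.15 of \cite{Shi2020even} would not have been restricted to $m\geq 5$ in the first place; the low-order case is singled out precisely because that argument does not transfer. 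So as written this is a plan with the decisive step missing, not a proof.

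The paper closes the gap by a realization argument rather than a direct Hilbert-series computation: it exhibits the braided vector space with $b\in\Bbb{G}_3$ (and $ae=b^{-2}$) concretely as a Yetter--Drinfeld module $\mathscr{H}_{jk,p}^{st}$ over $A_{1n}^{++}$ for suitable parameters (e.g. $(n,j,t,p)=(4,4,1,0)$ and $(4,8,1,0)$ to hit both primitive cube roots), then uses that $A_{1n}^{++}$ is a $2$-cocycle deformation of $\widehat{D}_{4n}$ \cite{MR1800713} --- so Nichols algebra dimensions are preserved --- and invokes the result of \cite{andruskiewitsch2007pointed} that Nichols algebras of two-dimensional Yetter--Drinfeld modules over the dihedral group $D_{4n}$ are either $4$-dimensional or infinite-dimensional. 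Since a $4$-dimensional Nichols algebra on a two-dimensional space would require quadratic relations (excluded by your own invertibility observation, equivalently by $b\neq -1$), the dimension is infinite. If you want to salvage your proposal, this realization-plus-cocycle-deformation step is the concrete content you would need to supply in place of the ``either/or'' paragraph.
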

\begin{proof}
If  $b\in\Bbb{G}_{4}$, then $b^2=(ae)^{-1}=-1=ae$.  
The case for $m\geq 5$ was proved in \cite[Corollary 4.15]{Shi2020even}. 
We only need to deal with the case $m=3$. Let $\mu=\lambda=N=1$, then
$ae=\omega^{2j(2t+1)}$, $b=(-1)^p \omega^{-j(2t+1)}$. And the parameter $b$ 
covers all $3$-th primitive 
roots of unity. For example, $b=\omega^{-12}\in\Bbb{G}_{3}$ in case $(n,j,t,p)=(4,4,1,0)$
and $b=\omega^{12}\in\Bbb{G}_{3}$ in case $(n,j,t,p)=(4,8,1,0)$. 
In this situation, $\dim \mathfrak{B}\left(\mathscr{H}_{jk,p}^{st}\right)=\infty$ since 
$A_{1n}^{++}$ is isomorphic to a $2$-cocycle deformation of $\widehat{D}_{4n}$
\cite{MR1800713} and Nichols algebras  associated with two dimensional Yetter-Drinfeld modules, over  the dihedral group $D_{4n}$ of order $4n$,  are either $4$-dimension or infinite dimension 
According to  \cite{andruskiewitsch2007pointed}. 
\end{proof}

\begin{corollary}
Let $N=n=1$, $\mu=1$, then $\dim \mathfrak{B}\left(\mathscr{H}_{jk,p}^{st}\right)=\infty$. 
\end{corollary}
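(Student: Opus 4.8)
The plan is to reduce the statement to the already-proved Corollary \ref{Vabe} by a direct substitution, so that no new Nichols-algebra computation is required. First I would read off from Lemma \ref{NA_H} the braiding data $ae$ and $b$ of the type-$V_{abe}$ braided vector space underlying $\mathscr{H}_{jk,p}^{st}$, and specialize to $N=n=1$, $\mu=1$. The hypothesis $\mu=1$ forces $\tilde\mu=\bar\mu=1$, and $\omega$ becomes a primitive $12$-th root of unity. Crucially, the parameter ranges of Theorem \ref{YDM}(2)(f) collapse when $N=n=1$: one is forced to take $s=1$, $t=0$, $\tfrac j2=1$ (i.e. $j=2$) and $k=0$, so the only remaining freedom is $p\in\Bbb{Z}_2$ together with the sign $\lambda=\pm1$. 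This finiteness of cases is what makes a purely computational argument feasible.

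Carrying out the substitution in the formulas of Lemma \ref{NA_H} gives $ae=\lambda\,\omega^{4}$ and $b=(-1)^p\sqrt{\lambda}\,\omega^{-2}$. The key algebraic observation I would record next is that $b^2=\lambda\,\omega^{-4}=(ae)^{-1}$ in every case (using $\lambda^{-1}=\lambda$ for $\lambda=\pm1$), while $ae=\lambda\,\omega^{4}\neq\lambda\,\omega^{-4}=b^2$ because $\omega^{8}\neq1$ in $\Bbb{G}_{12}$. Hence for all four choices of $(\lambda,p)$ we land squarely in the regime $ae\neq b^2=(ae)^{-1}$ governing the third line of \eqref{fomulaeVabe} and the hypotheses of Corollary \ref{Vabe}.

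It then remains only to check that $b$ is a primitive $m$-th root of unity with $m\geq 3$ in each case, which is the sole piece of genuine bookkeeping. For $\lambda=1$ one finds $b=\omega^{-2}\in\Bbb{G}_6$ when $p=0$ and $b=-\omega^{-2}=\omega^{4}\in\Bbb{G}_3$ when $p=1$; for $\lambda=-1$, fixing $\sqrt{\lambda}=\omega^{3}$, one gets $b=(-1)^p\omega\in\{\omega,\omega^{7}\}\subset\Bbb{G}_{12}$. Thus $m\in\{3,6,12\}\geq3$ throughout, and Corollary \ref{Vabe} yields $\dim\mathfrak{B}(\mathscr{H}_{jk,p}^{st})=\dim\mathfrak{B}(V_{abe})=\infty$ in all cases, which is the claim.

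The main (indeed essentially the only) obstacle is the careful evaluation of the exponents modulo $12$ and a consistent choice of the square root $\sqrt{\lambda}$; once the identity $b^2=(ae)^{-1}\neq ae$ is confirmed and the order $m\geq3$ of $b$ is determined, infinite-dimensionality follows immediately from the previously established Corollary \ref{Vabe}. Note in particular that because that corollary already covers the full range $m\geq3$, I need not separately worry about the delicate value $m=4$, and none of the four cases produces $m=4$ in any event.
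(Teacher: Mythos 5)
Your proof is correct and follows essentially the same route as the paper: specialize the formulas of Lemma \ref{NA_H} to $N=n=1$, $\mu=1$ (forcing $s=1$, $t=0$, $j=2$, $k=0$), verify $ae\neq b^2=(ae)^{-1}$, determine the order of $b$, and invoke Corollary \ref{Vabe}. You are in fact slightly more careful than the paper in noting that for $\lambda=1$, $p=1$ one gets $b=\omega^4\in\Bbb{G}_3$ rather than $\Bbb{G}_6$, but since Corollary \ref{Vabe} covers all $m\geq 3$ this makes no difference to the conclusion.
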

\begin{proof}
In this situation, $ae=\lambda\omega^4$, $b=(-1)^p\lambda^{\frac12}\omega^{-2}$. 
So $ae\neq b^2=(ae)^{-1}$, $b\in\Bbb{G}_{6}$ in case $\lambda=1$ and 
$b\in\Bbb{G}_{12}$ in case $\lambda=-1$.
\end{proof}

\begin{remark}\label{special_remark}
From  observation, we have $\frac{ae}{b^2}=\omega^{4jN(2t+1)}$. 
\begin{enumerate}
\item When $ae=1$, then $b=\omega^{-4N}$ under the case 
$j=2$, $t=0$ and the suitable choice of $p$. 
So $\dim \mathfrak{B}\left(\mathscr{H}_{jk,p}^{st}\right)=(2n+1)^2$
for suitable choice of $(n,N,s,t,j,k,p)$.
Denote $\alpha=(n,N,s,t,j,k,p)$. For example, 
when $\mu=1$, $\lambda=-1$, $ae=1$, then
\begin{align*}
\dim \mathfrak{B}\left(\mathscr{H}_{jk,p}^{st}\right)=2^2, \quad
& \alpha=(4, 6, 1, 1, 6, 3, 0), (4, 6, 2, 1, 6, 3, 1),\cdots;\\
\dim \mathfrak{B}\left(\mathscr{H}_{jk,p}^{st}\right)=3^2, \quad
& \alpha=( 1 , 6 , 2 , 0 , 2 , 5 , 1 ), ( 1 , 6 , 3 , 0 , 2 , 1 , 0 ),\cdots;\\
\dim \mathfrak{B}\left(\mathscr{H}_{jk,p}^{st}\right)=5^2, \quad
& \alpha=( 2 , 10 , 1 , 0 , 4 , 7 , 1 ), ( 2 , 10 , 3 , 0 , 2 , 9 , 1 ),\cdots;\\
\dim \mathfrak{B}\left(\mathscr{H}_{jk,p}^{st}\right)=6^2, \quad
& \alpha=( 1 , 6 , 2 , 0 , 2 , 5 , 0 ), ( 1 , 6 , 5 , 0 , 2 , 5 , 1 ),\cdots;\\
\dim \mathfrak{B}\left(\mathscr{H}_{jk,p}^{st}\right)=7^2, \quad
& \alpha=( 3 , 14 , 1 , 0 , 2 , 11 , 1 ), ( 3 , 14 , 1 , 0 , 4 , 1 , 1 ),\cdots;\\
\dim \mathfrak{B}\left(\mathscr{H}_{jk,p}^{st}\right)=9^2, \quad
& \alpha=( 4 , 18 , 2 , 3 , 2 , 11 , 1 ), ( 4 , 18 , 2 , 3 , 8 , 17 , 1 ),\cdots;\\
\dim \mathfrak{B}\left(\mathscr{H}_{jk,p}^{st}\right)=10^2, \quad
& \alpha=( 2 , 10 , 3 , 0 , 4 , 3 , 0 ), ( 2 , 10 , 7 , 1 , 2 , 7 , 1 ),\cdots;\\
\dim \mathfrak{B}\left(\mathscr{H}_{jk,p}^{st}\right)=11^2, \quad
& \alpha=( 5 , 22 , 3 , 0 , 4 , 1 , 1 ), ( 5 , 22 , 3 , 1 , 8 , 1 , 1 ),\cdots;\\
\dim \mathfrak{B}\left(\mathscr{H}_{jk,p}^{st}\right)=13^2, \quad
& \alpha=( 6 , 26 , 26 , 5 , 6 , 7 , 1 ), ( 6 , 26 , 26 , 5 , 10 , 3 , 1 ),\cdots.
\end{align*} 
\item 
When $b=-1$, then $ae=\omega^{8N}$ in case $j=2$ and $t=0$. 
So $\dim \mathfrak{B}\left(\mathscr{H}_{jk,p}^{st}\right)=4(2n+1)$
for suitable choice of $(n,N,s,t,j,k,p)$.
Denote $\alpha=(n,N,s,t,j,k,p)$. 
For example, when $\mu=1$, $\lambda=-1$, $b=-1$, then
\begin{align*}
\dim \mathfrak{B}\left(\mathscr{H}_{jk,p}^{st}\right)=12, \quad
& \alpha=( 1 , 6 , 2 , 0 , 2 , 1 , 0 ), ( 1 , 18 , 1 , 0 , 2 , 11 , 1 ),\cdots;\\
\dim \mathfrak{B}\left(\mathscr{H}_{jk,p}^{st}\right)=20, \quad
& \alpha=( 2 , 10 , 1 , 0 , 2 , 9 , 0 ), ( 2 , 20 , 7 , 1 , 4 , 2 , 1 ),\cdots;\\
\dim \mathfrak{B}\left(\mathscr{H}_{jk,p}^{st}\right)=28, \quad
& \alpha=( 3 , 14 , 3 , 1 , 6 , 9 , 1 ), ( 3 , 28 , 25 , 1 , 4 , 6 , 1 ),\cdots;\\
\dim \mathfrak{B}\left(\mathscr{H}_{jk,p}^{st}\right)=36, \quad
& \alpha=( 4 , 18 , 4 , 2 , 8 , 1 , 0 ), ( 4 , 18 , 12 , 2 , 2 , 5 , 1 ),\cdots;\\
\dim \mathfrak{B}\left(\mathscr{H}_{jk,p}^{st}\right)=44, \quad
& \alpha=( 5 , 22 , 1 , 2 , 6 , 9 , 1 ), ( 5 , 22 , 2 , 0 , 2 , 7 , 1 ),\cdots;\\
\dim \mathfrak{B}\left(\mathscr{H}_{jk,p}^{st}\right)=52, \quad
& \alpha=( 6 , 26 , 1 , 2 , 2 , 7 , 1 ), ( 6 , 26 , 3 , 2 , 6 , 11 , 1 ),\cdots;\\
\dim \mathfrak{B}\left(\mathscr{H}_{jk,p}^{st}\right)=60, \quad
& \alpha=( 7 , 30 , 5 , 3 , 2 , 7 , 0 ), ( 7 , 30 , 6 , 0 , 4 , 13 , 1 ),\cdots.
\end{align*}
\end{enumerate}
\end{remark}

\begin{lemma}\label{NA_I}
$\mathfrak{B}\left(\mathscr{I}_{jk,p}^{st}\right)$
is of type $V_{abe}$ with 
$ae=\lambda\bar{\mu}^{2s+2t+1}\omega^{4k(2n+1)(2s+2t+1)-2jN(2t+1)}$
and $b=(-1)^p\lambda^{t-\frac12}\bar{\mu}^{s+t+\frac12}
     \omega^{2k(2n+1)(2s+2t+1)+jN(1+2t)}$.
\end{lemma}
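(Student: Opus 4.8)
The plan is to mirror the template of Lemmas \ref{NA_C}--\ref{NA_H}: read the explicit module and comodule structure of $\mathscr{I}_{jk,p}^{st}$ off the Appendix, compute the braiding $c(w_i\otimes w_j)=(w_i)_{(-1)}\cdot w_j\otimes (w_i)_{(0)}$ on the four pairs of basis vectors, and match the outcome against the defining relations of type $V_{abe}$. Recall from Table \ref{YDMod1} that $\mathscr{I}_{jk,p}^{st}=\k w_1\oplus\k w_2$ carries the module $V_{jk}^\prime$ and the comodule $\Lambda_{s\,2t+1}$, the latter only after rescaling $w_2$ by the factor $\frac{(-1)^p}{\sqrt{\lambda\bar{\mu}}}\omega^{jN-2k(2n+1)}$ displayed in item II of the preceding discussion. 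Consequently, in the $\{w_1,w_2\}$ basis the coaction of $w_1$ (as recorded in the Appendix) carries, besides the diagonal monomial $x_{11}^{2s}\chi_{11}^{2t+1}$, the off-diagonal term $\frac{(-1)^p}{\sqrt{\lambda\bar{\mu}}}\omega^{jN-2k(2n+1)}x_{12}^{2s}\chi_{12}^{2t+1}$, while that of $w_2$ carries $x_{11}^{2s}\chi_{22}^{2t+1}$ together with $(-1)^p\sqrt{\lambda\bar{\mu}}\omega^{2k(2n+1)-jN}x_{12}^{2s}\chi_{21}^{2t+1}$.

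First I would exploit the decisive feature of the module $V_{jk}^\prime$: since $x_{11}$ and $x_{22}$ act as $0$, every coaction term containing $\chi_{11}^{2t+1}$ or $\chi_{22}^{2t+1}$ (equivalently the prefactor $x_{11}^{2s}$ with $s\geq 1$) kills the module and is discarded, so only the off-diagonal terms built from $x_{12}^{2s}\chi_{12}^{2t+1}$ and $x_{12}^{2s}\chi_{21}^{2t+1}$ survive. Because $x_{12}$ and $x_{21}$ act by anti-diagonal matrices and $\chi_{12}^{2t+1},\chi_{21}^{2t+1}$ are odd-length alternating words, hence again anti-diagonal, these operators interchange $w_1$ and $w_2$. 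This is precisely what forces the braiding into $V_{abe}$ shape rather than diagonal shape: one obtains $c(w_1\otimes w_1)=a\,w_2\otimes w_2$, $c(w_2\otimes w_2)=e\,w_1\otimes w_1$, $c(w_1\otimes w_2)=b\,w_1\otimes w_2$ and $c(w_2\otimes w_1)=b\,w_2\otimes w_1$, which is the content of the lemma.

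Next I would compute the three scalars. Using $x_{12}^2=\bar{\mu}\,\omega^{4k(2n+1)}$ as a scalar on $V_{jk}^\prime$ and diagonalising $x_{12}x_{21}$ and $x_{21}x_{12}$, the odd words $\chi_{12}^{2t+1}=(x_{12}x_{21})^t x_{12}$ and $\chi_{21}^{2t+1}=(x_{21}x_{12})^t x_{21}$ become explicit anti-diagonal matrices; reading off their $(2,1)$- and $(1,2)$-entries and including the normalisation prefactors above yields $a$, $b$, $e$ as monomials in $\lambda$, $\bar{\mu}$, $\omega$. Multiplying the values of $a$ and $e$, the prefactors $\frac{(-1)^p}{\sqrt{\lambda\bar{\mu}}}\omega^{jN-2k(2n+1)}$ and $(-1)^p\sqrt{\lambda\bar{\mu}}\omega^{2k(2n+1)-jN}$ cancel and, after using $\lambda^2=1$, one is left with the claimed $ae=\lambda\bar{\mu}^{2s+2t+1}\omega^{4k(2n+1)(2s+2t+1)-2jN(2t+1)}$. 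Reading off $b$ from $c(w_1\otimes w_2)$ gives the stated $b=(-1)^p\lambda^{t-\frac12}\bar{\mu}^{s+t+\frac12}\omega^{2k(2n+1)(2s+2t+1)+jN(1+2t)}$, and I would check that the value of $b$ extracted from $c(w_2\otimes w_1)$ agrees, the two differing only by a factor $\lambda^2$.

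I expect the only real difficulty to be bookkeeping rather than anything conceptual. The half-integer exponents $\lambda^{t-\frac12}$, $\bar{\mu}^{s+t+\frac12}$ and the sign $(-1)^p$ do not come from the raw coaction of $\Lambda_{s\,2t+1}$ but from the rescaling of $w_2$ used to realise the comodule; the point to watch is that this rescaling factor enters $b$, since the module action is the unrescaled $V_{jk}^\prime$, while it cancels in $ae$. Handling the square roots $\sqrt{\lambda}$ and $\sqrt{\bar{\mu}}$ consistently across all four entries, and collapsing the $\omega$-exponents to the compact forms stated, is the delicate step; once it is carried out the two asserted formulas follow exactly as in Lemma \ref{NA_H}.
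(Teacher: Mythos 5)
Your proposal is correct: the paper in fact omits the proof of Lemma \ref{NA_I}, but your computation follows exactly the template used for Lemmas \ref{NA_C}--\ref{NA_G}, and carrying it out (with $w_1\leftrightarrow v_1^\prime$, $w_2\leftrightarrow v_2^\prime$ for the module action and the rescaled $w_2$ for the coaction) does reproduce the stated values, namely $a=(-1)^p\lambda^{t-\frac12}\bar{\mu}^{s+t-\frac12}\omega^{2k(2n+1)(2s+2t-1)-jN(2t-1)}$ and $e=(-1)^p\lambda^{t+\frac32}\bar{\mu}^{s+t+\frac32}\omega^{2k(2n+1)(2s+2t+3)-jN(2t+3)}$, whose product is the claimed $ae$, together with the claimed $b$ (the coefficients of $c(w_1\otimes w_2)$ and $c(w_2\otimes w_1)$ agreeing up to $\lambda^2=1$ as you predicted). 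No gaps.
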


\begin{corollary}
Let $N=n=1$, $\mu=1$, then $\mathfrak{B}\left(\mathscr{I}_{jk,p}^{st}\right)=\infty$. 
\end{corollary}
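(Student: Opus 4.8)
The plan is to run the same argument used for the preceding corollary on $\mathscr{H}_{jk,p}^{st}$, now reading the braiding data off Lemma \ref{NA_I}. First I would pin down the admissible parameters: imposing $N=n=1$ and $\mu=1$ forces $s=1$, $j=2$, $k=0$ and $\bar\mu=1$, makes $\omega$ a primitive $12$th root of unity with $2n+1=3$, and leaves only $t\in\{0,1\}$, $p\in\Bbb{Z}_2$ and $\lambda=\pm1$. Substituting these into Lemma \ref{NA_I} collapses the exponents to $ae=\lambda\,\omega^{-4(2t+1)}$ and $b=(-1)^{p}\lambda^{\,t-\frac12}\,\omega^{2(2t+1)}$. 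The structural observation that drives everything is that $ae\cdot b^{2}=\lambda^{2}=1$, so $b^{2}=(ae)^{-1}$ holds identically, exactly as in the $\mathscr{H}$ case.

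I would then split on whether $ae=b^{2}$, that is, on whether $\omega^{8(2t+1)}=1$. For $t=0$ one has $\omega^{8}\neq1$, so $ae\neq b^{2}=(ae)^{-1}$ and the braiding is genuinely of non-diagonal type $V_{abe}$; it remains only to check that $b\in\Bbb{G}_m$ for some $m\geq3$ in each $(\lambda,p)$ case. Fixing the square roots used throughout the paper, one computes $b=\omega^{2}$ (order $6$) and $b=\omega^{8}$ (order $3$) when $\lambda=1$, and $b=\omega^{11}$ or $\omega^{5}$ (order $12$) when $\lambda=-1$. In every one of these cases Corollary \ref{Vabe} applies verbatim and gives $\dim\mathfrak{B}\left(\mathscr{I}_{jk,p}^{st}\right)=\infty$.

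The step I expect to be the real obstacle is the boundary value $t=n=1$, where $\omega^{8(2t+1)}=\omega^{24}=1$ forces $ae=b^{2}$ and the module degenerates to diagonal type, so that Corollary \ref{Vabe} no longer applies. Here I would instead appeal to the diagonal branch of formula \eqref{fomulaeVabe} in the subsection \ref{typeVabe}, using $b=(-1)^{p+1}\lambda^{\frac12}$: when $\lambda=-1$ one gets $b=\pm i$ of order $4$, which lands squarely in the infinite branch. The genuinely delicate part is pinning down the half-integer exponents $\lambda^{t-\frac12}$ and $\bar\mu^{s+t+\frac12}$ via the fixed choice of $\sqrt{\lambda}$ and $\sqrt{\lambda\bar\mu}$ already in use, since it is precisely this convention that decides the exact order of $b$ in the diagonal case and hence whether one avoids the finite Cartan types $A_1\times A_1$ and $A_2$.
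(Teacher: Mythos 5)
Your treatment of the case $t=0$ is correct and is essentially the paper's own argument: with $N=n=1$, $\mu=1$ one is forced to $s=1$, $j=2$, $k=0$, and Lemma \ref{NA_I} gives $ae=\lambda\omega^{-4}$, $b=(-1)^p\lambda^{-\frac12}\omega^{2}$, whence $b^{2}=(ae)^{-1}\neq ae$ and $b\in\Bbb{G}_m$ for some $m\in\{3,6,12\}$, so Corollary \ref{Vabe} applies. (You are in fact slightly more careful than the paper here: for $\lambda=1$, $p=1$ one gets $b=\omega^{8}\in\Bbb{G}_{3}$ rather than $\Bbb{G}_{6}$, which still falls under Corollary \ref{Vabe}.)

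The gap is in the boundary case $t=n=1$ that you single out as the real obstacle. There your own formulas give $ae=\lambda$ and $b=(-1)^{p+1}\lambda^{\frac12}$, so the braiding is of diagonal type, and you only dispose of $\lambda=-1$ (where $b=\pm i$ has order $4$ and the diagonal branch of \eqref{fomulaeVabe} gives $\infty$). You never address $\lambda=1$: there $b=(-1)^{p+1}=\pm1$, and for $p=0$ one gets $b=-1$ with $ae=1=b^{2}$, which by the diagonal case of subsection \ref{typeVabe} yields $\dim\mathfrak{B}(V_{abe})=4$ — i.e.\ your argument, taken at face value, would produce a \emph{finite-dimensional} Nichols algebra and contradict the statement you are proving. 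The resolution is that the $t=n$ case is not actually governed by the formula of Lemma \ref{NA_I}: in the appendix $\mathscr{I}_{jk,p}^{st}$ is only defined for $t\in\overline{0,n-1}$ (the $t=n$ member of Theorem \ref{YDM} has a different comodule structure, $\k h_s^+\oplus\k h_s^-$ rather than $\Lambda_{s\,2t+1}$), and the paper's proof accordingly treats only $t=0$. So either you must argue that $t=n$ lies outside the scope of Lemma \ref{NA_I} and of the corollary, or you must compute the braiding for that case separately; as written, extrapolating Lemma \ref{NA_I} to $t=1$ and then omitting the $\lambda=1$ subcase leaves the proof incomplete.
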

\begin{proof}
In this situation, $ae=\lambda \omega^{-4}$, $b=(-1)^p\lambda^{-\frac12}\omega^{2}$. 
So $b^2=(ae)^{-1}\neq ae$,  $b\in\Bbb{G}_{6}$ in case $\lambda=1$ and 
$b\in\Bbb{G}_{12}$ in case $\lambda=-1$. It's proved by the Corollary \ref{Vabe}. 
\end{proof}

\begin{remark}
Similar to the remark \ref{special_remark}, 
$\mathfrak{B}\left(\mathscr{I}_{jk,p}^{st}\right)
=\left\{\begin{array}{ll}
(2n+1)^2, &ae=1,\\
4(2n+1), &b=-1,
\end{array}\right.
$ for suitable choice of parameters. 
Denote $\alpha=(n,N,s,t,j,k,p)$. 
\begin{enumerate}
\item When $\mu=1$, $\lambda=-1$, $ae=1$, then
\begin{align*}
\dim \mathfrak{B}\left(\mathscr{I}_{jk,p}^{st}\right)=2^2, \quad
& \alpha=( 4 , 6 , 6 , 3 , 6 , 5 , 1 ), ( 4 , 10 , 1 , 1 , 6 , 5 , 1 ),\cdots;\\
\dim \mathfrak{B}\left(\mathscr{I}_{jk,p}^{st}\right)=3^2, \quad
& \alpha=( 1 , 6 , 3 , 0 , 2 , 5 , 1 ), ( 1 , 18 , 18 , 0 , 2 , 15 , 0 ),\cdots;\\
\dim \mathfrak{B}\left(\mathscr{I}_{jk,p}^{st}\right)=5^2, \quad
& \alpha=( 2 , 10 , 8 , 0 , 2 , 1 , 1 ), ( 2 , 10 , 9 , 1 , 2 , 1 , 0 ),\cdots;\\
\dim \mathfrak{B}\left(\mathscr{I}_{jk,p}^{st}\right)=6^2, \quad
& \alpha=( 1 , 12 , 3 , 0 , 2 , 10 , 0 ), ( 1 , 24 , 23 , 0 , 2 , 4 , 0 ),\cdots;\\
\dim \mathfrak{B}\left(\mathscr{I}_{jk,p}^{st}\right)=7^2, \quad
& \alpha=( 3 , 14 , 6 , 1 , 2 , 13 , 0 ), ( 3 , 14 , 8 , 1 , 4 , 1 , 1 ),\cdots;\\
\dim \mathfrak{B}\left(\mathscr{I}_{jk,p}^{st}\right)=9^2, \quad
& \alpha=( 4 , 18 , 4 , 2 , 2 , 7 , 0 ), ( 4 , 18 , 5 , 0 , 8 , 13 , 1 ),\cdots;\\
\dim \mathfrak{B}\left(\mathscr{I}_{jk,p}^{st}\right)=10^2, \quad
& \alpha=( 2 , 20 , 11 , 0 , 2 , 18 , 1 ), ( 2 , 20 , 12 , 1 , 4 , 2 , 1 ),\cdots;\\
\dim \mathfrak{B}\left(\mathscr{I}_{jk,p}^{st}\right)=11^2, \quad
& \alpha=( 5 , 22 , 1 , 1 , 6 , 19 , 0 ), ( 5 , 22 , 1 , 2 , 6 , 9 , 1 ),\cdots;\\
\dim \mathfrak{B}\left(\mathscr{I}_{jk,p}^{st}\right)=13^2, \quad
& \alpha=( 6 , 26 , 1 , 4 , 4 , 21 , 1 ), ( 6 , 26 , 2 , 1 , 4 , 11 , 1 ),\cdots.
\end{align*}
\item 
When $\mu=1$, $\lambda=-1$, $b=-1$, then
\begin{align*}
\dim \mathfrak{B}\left(\mathscr{I}_{jk,p}^{st}\right)=12, \quad
& \alpha=( 1 , 6 , 2 , 0 , 2 , 5 , 1 ), ( 1 , 18 , 7 , 0 , 2 , 5 , 1 ),\cdots;\\
\dim \mathfrak{B}\left(\mathscr{I}_{jk,p}^{st}\right)=20, \quad
& \alpha=( 2 , 10 , 2 , 1 , 4 , 9 , 1 ), ( 2 , 10 , 4 , 0 , 4 , 9 , 1 ),\cdots;\\
\dim \mathfrak{B}\left(\mathscr{I}_{jk,p}^{st}\right)=28, \quad
& \alpha=( 3 , 14 , 4 , 1 , 4 , 11 , 1 ), ( 3 , 14 , 5 , 2 , 6 , 5 , 0 ),\cdots;\\
\dim \mathfrak{B}\left(\mathscr{I}_{jk,p}^{st}\right)=36, \quad
& \alpha=( 4 , 18 , 5 , 3 , 2 , 5 , 1 ), ( 4 , 18 , 5 , 3 , 4 , 1 , 0 ),\cdots;\\
\dim \mathfrak{B}\left(\mathscr{I}_{jk,p}^{st}\right)=44, \quad
& \alpha=( 5 , 22 , 2 , 0 , 6 , 1 , 1 ), ( 5 , 22 , 2 , 0 , 8 , 5 , 0 ),\cdots;\\
\dim \mathfrak{B}\left(\mathscr{I}_{jk,p}^{st}\right)=52, \quad
& \alpha=( 6 , 26 , 1 , 4 , 8 , 23 , 1 ), ( 6 , 26 , 1 , 4 , 10 , 19 , 0 ),\cdots;\\
\dim \mathfrak{B}\left(\mathscr{I}_{jk,p}^{st}\right)=60, \quad
& \alpha=( 7 , 30 , 30 , 3 , 2 , 13 , 1 ), ( 7 , 30 , 30 , 3 , 14 , 1 , 1 ),\cdots.
\end{align*}
\end{enumerate}
\end{remark}

\subsection{The Nichols algebras $\mathfrak{B}\left(\mathscr{L}_{k,pq}^s\right)$}\label{section_NAL}
The braiding of $\mathfrak{B}\left(\mathscr{L}_{k,pq}^s\right)$ can be described as follows. 
\begin{enumerate}
\item When $a+b\equiv 1\mod 2$, 
let $b+(2a-1)=d(2n+1)+r$, $d\in\Bbb{N}$, $r\in\overline{0,2n}$. Then
\begin{align*}
c(m_a\otimes m_b)=
\left\{\begin{array}{rl}
\omega^{4k(2n+1)(s-a)} m_{b+2a-1}\otimes m_a, & d=0,\\
\omega^{4k(2n+1)(s-a)} m_{2n+1}\otimes m_a, & d=1, r=0\\
(-1)^p\omega^{2k(2n+1)[2(s-a)+2(r-1)+1]} m_{2n+1-(r-1)}\otimes m_a, & d=1, r\neq 0,\\
(-1)^p\omega^{2k(2n+1)[2(s-a)+4n+1]} m_{1}\otimes m_a, & d=2, r=0,\\
\omega^{4k(2n+1)[(s-a)+2n+1]} m_{r}\otimes m_a, & d=2, r\neq 0.
\end{array}\right.
\end{align*}
\item When $a+b\equiv 0\mod 2$, let $2a-1=b+1+d(2n+1)+r$, 
$d\in \Bbb{N}$, $r\in\overline{0,2n}$.
Then 
\begin{align*}
 c(m_a\otimes m_b)=
\left\{\begin{array}{rl}
\omega^{4k(2n+1)(s-a+2a-1)}m_{b-(2a-1)}\otimes m_a, & 2a-1<b,\\
(-1)^p\omega^{2k(2n+1)[2(s-a)+2b-1]}m_{1}\otimes m_a,&2a-1=b,\\
(-1)^p\omega^{2k(2n+1)[2(s-a)+2b-1]}m_{r+1}\otimes m_a, 
                 &d=0,\\
\omega^{4k(2n+1)[s-a+b+r]}m_{2n+1-r}\otimes m_a, 
                &d=1.\\
\end{array}\right.
\end{align*}
\end{enumerate}

\begin{lemma}\label{racklemma}
Let $(V,c)$ be a braided vector space such that 
$c(x\otimes y)\in \k f_x(y)\otimes x$, where the map 
$f_x: V\to V$ is bijective for any $x\in V$ under a fixed basis. Then $(V,\vartriangleright)$ is a rack 
with $x\vartriangleright y=f_x(y)$ under the fixed basis. 
\end{lemma}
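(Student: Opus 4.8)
The plan is to verify directly the two defining axioms of a rack: that each left translation $y \mapsto x \vartriangleright y = f_x(y)$ is a bijection, and that $\vartriangleright$ is self-distributive, that is, $x \vartriangleright (y \vartriangleright z) = (x \vartriangleright y) \vartriangleright (x \vartriangleright z)$ for all basis elements $x,y,z$. The first axiom is immediate, since the bijectivity of each $f_x$ is part of the hypothesis. All the content therefore lies in establishing self-distributivity, and for this I would extract it from the braid (Yang--Baxter) equation that $c$ satisfies by virtue of $(V,c)$ being a braided vector space.

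Concretely, write $c(x \otimes y) = q_{x,y}\, f_x(y) \otimes x$ with scalars $q_{x,y} \in \Bbbk$; since $c$ is invertible and each $f_x$ permutes the chosen basis, these scalars are nonzero. First I would evaluate both sides of
\[
(c \otimes \mathrm{id})(\mathrm{id} \otimes c)(c \otimes \mathrm{id})
= (\mathrm{id} \otimes c)(c \otimes \mathrm{id})(\mathrm{id} \otimes c)
\]
on a basis vector $x \otimes y \otimes z$. A direct three-step computation shows that the left-hand side produces a single term, a nonzero scalar times $f_{f_x(y)}(f_x(z)) \otimes f_x(y) \otimes x$, while the right-hand side produces a nonzero scalar times $f_x(f_y(z)) \otimes f_x(y) \otimes x$.

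The key step is then the comparison. Both resulting tensors share the last two legs $f_x(y) \otimes x$, and since $f_x$ and $f_y$ send basis elements to basis elements, the first legs are themselves basis vectors. Because the scalars are nonzero, equality of these two one-term expressions forces equality of their first legs, i.e. $f_{f_x(y)}(f_x(z)) = f_x(f_y(z))$. Rewriting this with $x \vartriangleright y = f_x(y)$ gives exactly $(x \vartriangleright y) \vartriangleright (x \vartriangleright z) = x \vartriangleright (y \vartriangleright z)$, the self-distributive law, and together with the bijectivity already noted this shows that $(V,\vartriangleright)$ is a rack.

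The main obstacle is essentially bookkeeping: one must track carefully through the three applications of $c$ which basis vector lands in which tensor slot, so that after simplification the two sides genuinely agree in the last two legs and isolate self-distributivity in the first. The one conceptual point to guard is that the scalar coefficients stay nonzero throughout --- guaranteed here by invertibility of $c$ and bijectivity of the $f_x$ --- since otherwise the comparison of basis vectors would be vacuous. I note in passing that matching the scalar prefactors instead yields a $2$-cocycle condition on the $q_{x,y}$, which is not needed for the rack structure itself.
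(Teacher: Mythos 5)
Your proposal is correct and follows essentially the same route as the paper: both evaluate the two sides of the braid equation $c_1c_2c_1 = c_2c_1c_2$ on a basis tensor $x\otimes y\otimes z$, observe that each side lands in the line spanned by (first leg) $\otimes\, f_x(y)\otimes x$, and conclude $f_{f_x(y)}(f_x(z)) = f_x(f_y(z))$ from the nonvanishing of the scalars. Your write-up merely makes explicit the bookkeeping of the scalar factors and the bijectivity axiom, which the paper leaves implicit.
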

\begin{proof}
For any $x,y,z\in V$, 
\begin{align*}
c_1c_2c_1(x\otimes y\otimes z)
&\in \k [(x\vartriangleright y)\vartriangleright (x \vartriangleright z)]
\otimes (x\vartriangleright y)\otimes x, \\
c_2c_1c_2(x\otimes y\otimes z)
&\in\k [x\vartriangleright (y\vartriangleright z)]
\otimes (x\vartriangleright y)\otimes x. 
\end{align*}
So $x\vartriangleright (y\vartriangleright z)
=(x\vartriangleright y)\vartriangleright (x \vartriangleright z)$.
\end{proof}

\begin{corollary}
The Nichols algebra $\mathfrak{B}\left(\mathscr{L}_{k,pq}^s\right)$
is of rack type. 
\end{corollary}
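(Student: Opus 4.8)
The plan is to deduce everything from Lemma \ref{racklemma}, so the task reduces to recasting the braiding of $\mathscr{L}_{k,pq}^s$ into the exact shape that lemma requires. Working in the distinguished basis $\{m_1,\dots,m_{2n+1}\}$ of the $(2n+1)$-dimensional module, I would first read off from the two displayed case analyses above that every value $c(m_a\otimes m_b)$ is a \emph{nonzero} scalar (a sign times a power of $\omega$) times a \emph{single} basis tensor $m_{\sigma_a(b)}\otimes m_a$; that is, $c(m_a\otimes m_b)\in\k\,m_{\sigma_a(b)}\otimes m_a$, where $\sigma_a\colon\overline{1,2n+1}\to\overline{1,2n+1}$ is the index map extracted from the formulas through the auxiliary quantities $d$ and $r$. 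Setting $f_{m_a}(m_b):=m_{\sigma_a(b)}$, the hypothesis $c(x\otimes y)\in\k\,f_x(y)\otimes x$ of Lemma \ref{racklemma} holds verbatim, and the only remaining point is the bijectivity of each $f_{m_a}$.

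For the bijectivity I would argue structurally rather than grind through the piecewise definition of $\sigma_a$. Since $\mathscr{L}_{k,pq}^s$ is a Yetter--Drinfeld module, its braiding $c$ is invertible on $V\otimes V$. In the tensor-square basis $\{m_a\otimes m_b\}$ the formulas above exhibit $c$ as a monomial matrix: each basis tensor $m_a\otimes m_b$ is sent to a nonzero multiple of the basis tensor $m_{\sigma_a(b)}\otimes m_a$. A monomial matrix is invertible precisely when the induced map on index pairs, $(a,b)\mapsto(\sigma_a(b),a)$, is a permutation of $\overline{1,2n+1}^{\,2}$. Because the second output coordinate merely records $a$, injectivity of this map is equivalent to injectivity of $b\mapsto\sigma_a(b)$ for each fixed $a$, and finiteness of $\overline{1,2n+1}$ then upgrades injectivity to bijectivity. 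Hence every $f_{m_a}$ is a bijection.

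With both hypotheses of Lemma \ref{racklemma} verified, the lemma yields that $\bigl(\overline{1,2n+1},\vartriangleright\bigr)$, with $m_a\vartriangleright m_b=m_{\sigma_a(b)}$, is a rack; the accompanying scalars $q_{ab}$, being nonzero, constitute a $2$-cocycle on this rack (the cocycle identity being forced by the braid equation that $c$ already satisfies). Therefore $\mathfrak{B}\!\left(\mathscr{L}_{k,pq}^s\right)$ is of rack type. The main obstacle I anticipate is one of temptation rather than of mathematics: it is natural to try to confirm bijectivity of $\sigma_a$ by hand, which forces a tedious bookkeeping across the parity condition $a+b\bmod 2$ and the sub-cases $d=0,1,2$ together with $r=0$ versus $r\neq0$, checking that the images of the pieces are pairwise disjoint and jointly exhaust $\overline{1,2n+1}$. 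The invertibility argument sidesteps this entirely; the only care then needed is to confirm that each listed scalar $q_{ab}$ is genuinely nonzero, which is immediate since each is $\pm1$ times a power of the root of unity $\omega$.
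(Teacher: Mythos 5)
Your proposal is correct and follows exactly the route the paper intends: the corollary is stated as an immediate application of Lemma \ref{racklemma} to the displayed braiding of $\mathscr{L}_{k,pq}^s$, whose values are visibly nonzero monomials of the form $\k\, m_{\sigma_a(b)}\otimes m_a$. Your structural derivation of the bijectivity of each $f_{m_a}$ from the invertibility of the Yetter--Drinfeld braiding is a clean way to supply the one hypothesis the paper leaves unverified, and it avoids the case-by-case check without changing the argument in any essential way.
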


\begin{lemma}
When $n=1$, $p=1$, $k=0$, then $\dim \mathfrak{B}\left(\mathscr{L}_{k,pq}^s\right)=12$. 
It is generated by $m_1$, $m_2$, $m_3$ and with relations
\begin{align}\label{Ndim12}
m_3m_2=m_1m_3-m_2m_1,\quad m_2m_3=-m_1m_2+m_3m_1, 
\quad m_i^2=0 \quad\forall i\in\overline{1,3}.
\end{align}
\end{lemma}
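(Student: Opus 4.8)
The plan is to make the braiding explicit and then to recognize $(V,c)$ as a classical braided vector space. First I would substitute $n=1$, $p=1$, $k=0$ into the two case-formulas for $c$ given at the beginning of Section~\ref{section_NAL}. Since $k=0$ trivialises every power of $\omega$, each value collapses to $c(m_a\otimes m_b)=\pm\,m_{a\vartriangleright b}\otimes m_a$ on the basis $\{m_1,m_2,m_3\}$ (here $2n+1=3$). By Lemma~\ref{racklemma} and the corollary just above, $V=\k m_1\oplus\k m_2\oplus\k m_3$ with this braiding is of rack type, and tabulating the pairs $(a\vartriangleright b,\,q_{a,b})$ identifies the underlying rack as the three-element dihedral rack $\mathbb{D}_3$ (the rack of transpositions of $S_3$), equipped with a $2$-cocycle $q$ that becomes the constant cocycle $-1$ after a diagonal rescaling $m_a\mapsto\xi_a m_a$. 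In particular the rack identity $a\vartriangleright a=a$ forces $q_{a,a}=-1$ on the diagonal.

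Next I would read off the relations. The degree-two part of the defining ideal of $\mathfrak{B}(V)$ is $\ker(\mathrm{id}_{V\otimes V}+c)$. The three diagonal vectors $m_a\otimes m_a$ lie in the $(-1)$-eigenspace and give $m_a^2=0$, while the remaining two-dimensional piece of that eigenspace produces the two relations $m_3m_2=m_1m_3-m_2m_1$ and $m_2m_3=-m_1m_2+m_3m_1$. One checks $\dim\ker(\mathrm{id}+c)=5$, matching a degree-two component of dimension $9-5=4$; these five elements are exactly the relations \eqref{Ndim12}.

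It remains to prove that \eqref{Ndim12} already generates the full Nichols ideal and cuts the dimension down to $12$. The concrete route is a Diamond-Lemma analysis of the quadratic algebra $\hat{\mathfrak{B}}=T(V)$ modulo \eqref{Ndim12}: ordering $m_1<m_2<m_3$ degree-lexicographically makes the leading words $m_1^2,m_2^2,m_3^2,m_3m_2,m_3m_1$, so an $m_3$ can only sit at the end of a normal word; the single overlap $m_3m_2m_2$ forces the cubic $m_2m_1m_2=-m_1m_2m_1$, which therefore already lies in the quadratic ideal, and once this word is adjoined all remaining ambiguities resolve. The normal monomials are then the six alternating words $1,m_1,m_2,m_1m_2,m_2m_1,m_1m_2m_1$ in $m_1,m_2$, each optionally followed by a single trailing $m_3$, giving $12$ basis elements with Hilbert series $1+3t+4t^2+3t^3+t^4=(1+t)^2(1+t+t^2)$. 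Since $\mathfrak{B}(V)$ is a graded quotient of $\hat{\mathfrak{B}}$ we immediately get $\dim\mathfrak{B}(V)\le12$. The genuine difficulty is the reverse inequality, i.e. showing that no essentially new relation appears in degree $\ge3$ (equivalently that $\mathfrak{B}(V)$ is quadratic); I would close this gap through the identification of the first paragraph, since the Nichols algebra of the transposition rack of $S_3$ with the $-1$ cocycle is the quadratic Fomin--Kirillov algebra $\mathcal{E}_3$, which is known to be $12$-dimensional with precisely presentation \eqref{Ndim12} (compare the rack computations in \cite{andruskiewitsch2007pointed}). Hence $\mathfrak{B}(V)=\hat{\mathfrak{B}}$ and $\dim\mathfrak{B}\left(\mathscr{L}_{k,pq}^s\right)=12$.
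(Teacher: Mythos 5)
Your proposal is correct and follows essentially the same route as the paper: both reduce the claim to the identification of the braided vector space (for $p=1$, $k=0$) with the transposition rack of $S_3\cong D_6$ carrying the constant cocycle $-1$, whose Nichols algebra --- the $12$-dimensional quadratic algebra of \cite[Proposition 3.3.9]{andruskiewitsch1998braided}, i.e.\ the Fomin--Kirillov algebra $\mathcal{E}_3$ --- has precisely the presentation \eqref{Ndim12}. Your explicit rescaling making the cocycle constant, the computation $\dim\ker(\mathrm{id}+c)=5$, and the Diamond-Lemma upper bound supply details the paper leaves implicit, but the decisive step (importing the known dimension of $\mathcal{E}_3$) is the same.
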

\begin{proof}
When $n=1$, then the braiding of $\mathfrak{B}\left(\mathscr{L}_{k,pq}^s\right)$ is given by 
\begin{align*}
c(m_1\otimes m_1)
&=(-1)^p\omega^{2k(2n+1)(2s-1)}m_1\otimes m_1,\\
c(m_1\otimes m_2)
&=\omega^{4k(2n+1)(s-1)}m_3\otimes m_1,\\
c(m_1\otimes m_3)
&=\omega^{4k(2n+1)s}m_2\otimes m_1,\\
c(m_2\otimes m_1)
&=(-1)^p\omega^{2k(2n+1)[1+2(s-2)]}m_3\otimes m_2,\\
c(m_2\otimes m_2)
&=(-1)^p\omega^{2k(2n+1)(2s-1)}m_2\otimes m_2,\\
c(m_2\otimes m_3)
&=(-1)^p\omega^{2k(2n+1)(1+2s)}m_1\otimes m_2,\\
c(m_3\otimes m_1)
&=\omega^{4k(2n+1)(s-1)}m_2\otimes m_3,\\
c(m_3\otimes m_2)
&=\omega^{4k(2n+1)s}m_1\otimes m_3,\\
c(m_3\otimes m_3)
&=(-1)^p\omega^{2k(2n+1)(2s-1)}m_3\otimes m_3.
\end{align*}
When $p=1$, $k=0$, it's easy to see that the relations \eqref{Ndim12} hold. 
The Nichols algebra is isomorphic to the $12$-dimensional Nichols algebra over 
the dehidral group $D_{6}$ of order $6$, see \cite[Proposition 3.3.9]{andruskiewitsch1998braided}. 
\end{proof}

\begin{corollary}
When $N=n=1$, then $\dim \mathfrak{B}\left(\mathscr{L}_{k,pq}^s\right)
=\left\{\begin{array}{ll}12, &\text{if}\,\,p=1,\\
\infty, &\text{if}\,\,p=0.\end{array}\right.$
\end{corollary}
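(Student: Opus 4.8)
The plan is to split on the parity $p$ and to use that, once $N=n=1$, the remaining parameters are pinned down by their ranges to $s=1$ and $k=0$ (since $s\in\overline{1,N}$ and $k\in\overline{0,N-1}$). Consequently the braiding computed in the preceding lemma depends only on $p$ and not on $q$, so the answer is the same for both values of $q$. When $p=1$ there is nothing left to do: the preceding lemma already evaluates $\dim\mathfrak{B}(\mathscr{L}_{k,pq}^s)=12$ in exactly the regime $n=1$, $p=1$, $k=0$ at hand.

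It remains to treat $p=0$ and to show $\dim\mathfrak{B}(\mathscr{L}_{k,pq}^s)=\infty$. First I would specialise the braiding displayed in the proof of the preceding lemma to $k=0$, $n=1$, $s=1$. Every exponent of $\omega$ there carries a factor $k$, so all those powers collapse to $1$ and the braiding becomes an (unsigned, since $(-1)^0=1$) permutation of the basis. In particular the self-braiding of the first generator is the identity,
\[
c(m_1\otimes m_1)=m_1\otimes m_1 .
\]

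The remainder of the argument is structural rather than computational. Because $c$ restricts to the identity on $m_1\otimes m_1$, every braided lift $T_\sigma$ of a permutation $\sigma$ acts as the identity on the pure tensor $m_1^{\otimes n}$, so the $n$-th quantum symmetrizer $\mathfrak{S}_n=\sum_{\sigma}T_\sigma$ of $\mathfrak{B}(\mathscr{L}_{k,0q}^s)$ sends $m_1^{\otimes n}$ to $n!\,m_1^{\otimes n}$. Since $\operatorname{char}\Bbbk=0$ this is nonzero, whence $m_1^{\otimes n}\notin\ker\mathfrak{S}_n$ and its image $m_1^{\,n}$ in $\mathfrak{B}(\mathscr{L}_{k,0q}^s)$ is nonzero for every $n$. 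Equivalently, $\Bbbk m_1$ is a one-dimensional braided subspace with self-braiding eigenvalue $1$, so $\mathfrak{B}(\Bbbk m_1)\cong\Bbbk[m_1]$ embeds as a graded subalgebra; either formulation yields $\dim\mathfrak{B}(\mathscr{L}_{k,0q}^s)=\infty$.

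The only point meriting a word of care---and the sole (mild) obstacle---is the passage from $m_1^{\otimes n}\notin\ker\mathfrak{S}_n$ to $m_1^{\,n}\neq 0$ in $\mathfrak{B}(\mathscr{L}_{k,0q}^s)$. This is nothing but the definition of the Nichols algebra, whose degree-$n$ component is the quotient of the $n$-th tensor power by $\ker\mathfrak{S}_n$, so no relation of the larger algebra can annihilate the surviving powers of $m_1$. Combining the two cases gives the asserted dichotomy: $12$ when $p=1$ and $\infty$ when $p=0$.
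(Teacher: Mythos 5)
Your proposal is correct and follows the route the paper intends: for $N=n=1$ the ranges force $s=1$, $k=0$, so the $p=1$ case is exactly the preceding lemma, while for $p=0$ all braiding coefficients collapse to $1$, and the trivial self-braiding $c(m_1\otimes m_1)=m_1\otimes m_1$ makes $\k m_1$ a braided subspace whose Nichols algebra is the polynomial ring, forcing $\dim\mathfrak{B}\left(\mathscr{L}_{k,0q}^s\right)=\infty$. Your quantum-symmetrizer justification of $m_1^n\neq 0$ is a valid, self-contained way to make this standard fact explicit.
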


\begin{lemma}
Suppose  $n=2$, $k=0$, $p=1$, then the Nichols algebra 
$\mathfrak{B}\left(\mathscr{L}_{k,pq}^s\right)$ has the following relations.
\begin{align*}
m_i^2=0,\quad \forall i\in\overline{1,5},\\
m_1m_2-m_3m_1-m_5m_3+m_4m_5-m_2m_4=0,\\
m_1m_3-m_2m_1+m_4m_2-m_5m_4+m_3m_5=0,\\
m_1m_4-m_5m_1+m_2m_5+m_3m_2+m_4m_3=0,\\
m_1m_5-m_4m_1-m_3m_4-m_2m_3-m_5m_2=0.
\end{align*}
\end{lemma}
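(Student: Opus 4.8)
The plan is to read the statement as the assertion that each listed element lies in the defining ideal of $\mathfrak{B}\left(\mathscr{L}_{k,pq}^s\right)=T(V)/\mathcal{I}$, where $V=\bigoplus_{i=1}^{5}\k m_i$. Since every relation in the list is homogeneous of degree two, it suffices to show that each one lies in the degree-two part of $\mathcal{I}$, namely $\ker\mathfrak{S}_2=\ker(\mathrm{id}+c)$ for the quantum symmetrizer $\mathfrak{S}_2=\mathrm{id}+c$: an element $R\in V\otimes V$ is a relation of $\mathfrak{B}(V)$ if and only if $c(R)=-R$. So the first step is to specialize the braiding formulas of Section \ref{section_NAL} to $n=2$, $k=0$, $p=1$. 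Setting $k=0$ collapses every power of $\omega$ to $1$ and $p=1$ turns each surviving sign $(-1)^p$ into $-1$, so the braiding takes the rack-type shape $c(m_a\otimes m_b)=\epsilon_{a,b}\,m_{a\vartriangleright b}\otimes m_a$ with $\epsilon_{a,b}\in\{\pm1\}$, where $\vartriangleright$ is the rack supplied by Lemma \ref{racklemma}.

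The relations $m_i^2=0$ are immediate once this table is in hand: the diagonal entries read $c(m_i\otimes m_i)=-m_i\otimes m_i$ (in rack terms $i\vartriangleright i=i$), hence $\mathfrak{S}_2(m_i\otimes m_i)=m_i\otimes m_i+c(m_i\otimes m_i)=0$ for every $i\in\overline{1,5}$.

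For the four quadratic relations I would argue orbit by orbit. Consider the bijection $T\colon(a,b)\mapsto(a\vartriangleright b,a)$ on index pairs, which records precisely how $c$ permutes the monomial basis $\{m_a\otimes m_b\}$ of $V\otimes V$ up to the scalars $\epsilon_{a,b}$. The five diagonal pairs are the fixed points of $T$ (handled above), and the remaining twenty off-diagonal pairs split into four $T$-orbits of length five. On each such orbit $c$ acts as a cyclic signed permutation, and one checks that $c^{5}=\left(\prod\epsilon\right)\mathrm{id}=-\mathrm{id}$, so $-1$ occurs as a simple eigenvalue of $c$; its eigenvector, whose coefficients are $\pm1$ determined by the partial products of the signs $\epsilon_{a,b}$ along the orbit, is exactly one of the four displayed relations. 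I would verify this directly: starting from $m_1m_2$, apply $c$ repeatedly, recording at each step the new pair $a\vartriangleright b$ and the sign $\epsilon_{a,b}$, and confirm that the five monomials and signs obtained coincide with the five terms of $m_1m_2-m_3m_1-m_5m_3+m_4m_5-m_2m_4$ and that $c$ sends this sum to its negative; the other three relations arise from the other three orbits in the same way.

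The main obstacle is purely bookkeeping: one must pin down the rack action $a\vartriangleright b$ and the cocycle signs $\epsilon_{a,b}$ correctly from the case-split braiding formulas, so that the five terms of each relation genuinely close up into a single $c$-orbit and the signs telescope under $\mathrm{id}+c$. I emphasize that the statement only asserts that these relations \emph{hold} in $\mathfrak{B}\left(\mathscr{L}_{k,pq}^s\right)$; it does not claim that they give a full presentation, which is consistent with the dimension of this algebra being left open for $n\geq 2$. As a sanity check one expects, in parallel with the $n=1$ case, that these are the defining quadratic relations of the Nichols algebra attached to the dihedral rack of the pentagon with the constant sign cocycle.
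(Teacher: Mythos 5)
Your proposal is correct and follows essentially the same route as the paper: specialize the explicit braiding table to $n=2$, $k=0$, $p=1$ (so all scalars become $\pm1$) and verify by direct computation that each displayed element is annihilated by $\mathfrak{S}_2=\mathrm{id}+c$, which is exactly the degree-two part of the defining ideal. Your organization of the twenty off-diagonal basis vectors into four signed $c$-orbits of length five with sign product $-1$ is a clean way to package the same verification the paper leaves as "direct computations," and the signs and orbits you describe do check out against the braiding table.
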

\begin{proof}
When $n=2$, then the braiding of $\mathfrak{B}\left(\mathscr{L}_{k,pq}^s\right)$ is given by 
\begin{align*}
c( m_1 \otimes m_1 )&= A B^{2 s - 1} m_1 \otimes m_1,
&c( m_1 \otimes m_2 )&= B^{2 s - 2} m_3 \otimes m_1, \\
c( m_1 \otimes m_3 )&= B^{2 s} m_2 \otimes m_1, 
&c( m_1 \otimes m_4 )&= B^{2 s - 2} m_5 \otimes m_1, \\
c( m_1 \otimes m_5 )&= B^{2 s} m_4 \otimes m_1, 
&c( m_2 \otimes m_1 )&= B^{2 s - 4} m_4 \otimes m_2, \\
c( m_2 \otimes m_2 )&= A B^{2 s - 1} m_2 \otimes m_2, 
&c( m_2 \otimes m_3 )&= A B^{2 s - 3} m_5 \otimes m_2, \\
c( m_2 \otimes m_4 )&= B^{2 s + 2} m_1 \otimes m_2, 
&c( m_2 \otimes m_5 )&= A B^{2 s + 1} m_3 \otimes m_2, \\
c( m_3 \otimes m_1 )&= A B^{2 s - 5} m_5 \otimes m_3, 
&c( m_3 \otimes m_2 )&= A B^{2 s - 3} m_4 \otimes m_3, \\
c( m_3 \otimes m_3 )&= A B^{2 s - 1} m_3 \otimes m_3, 
&c( m_3 \otimes m_4 )&= A B^{2 s + 1} m_2 \otimes m_3, \\
c( m_3 \otimes m_5 )&= A B^{2 s + 3} m_1 \otimes m_3, 
&c( m_4 \otimes m_1 )&= A B^{2 s - 3} m_3 \otimes m_4, \\
c( m_4 \otimes m_2 )&=  B^{2 s - 4} m_5 \otimes m_4, 
&c( m_4 \otimes m_3 )&= A B^{2 s + 1} m_1 \otimes m_4, \\
c( m_4 \otimes m_4 )&= A B^{2 s - 1} m_4 \otimes m_4, 
&c( m_4 \otimes m_5 )&=  B^{2 s + 2} m_2 \otimes m_4, \\
c( m_5 \otimes m_1 )&=  B^{2 s - 2} m_2 \otimes m_5, 
&c( m_5 \otimes m_2 )&=  B^{2 s} m_1 \otimes m_5, \\
c( m_5 \otimes m_3 )&=  B^{2 s - 2} m_4 \otimes m_5, 
&c( m_5 \otimes m_4 )&=  B^{2 s} m_3 \otimes m_5, \\
c( m_5 \otimes m_5 )&= A B^{2 s - 1} m_5 \otimes m_5,
\end{align*}
where $A=(-1)^p$, $B=\omega^{2k(2n+1)}$. So $A=-1$, $B=1$ in case 
$k=0$, $p=1$.
The relations can be obtained by direct computations.
\end{proof}

\subsection{The Nichols algebra $\mathfrak{B}\left(\mathscr{N}_{k,pq}^s\right)$} \label{section_NAN}
 
 Define $R^{\gamma }: \mathscr{N}_{k,pq}^s\otimes \mathscr{N}_{k,pq}^s
\longrightarrow \mathscr{N}_{k,pq}^s\otimes \mathscr{N}_{k,pq}^s$ such that 
$ R^{\gamma }(w_\alpha\otimes w_\beta)=$
\begin{align*}
\left\{\begin{array}{rl}
w_{\beta+\gamma }\otimes w_{2n-\alpha+2},
&\beta+\gamma \leq 2n+1,\\
(-1)^p\lambda\bar{\mu}^{\frac12}\omega^{2k(2n+1)}
w_{2n+1}\otimes w_{2n-\alpha+2},
&\beta+\gamma =2n+2,\\
(-1)^p\lambda\left[\bar{\mu}^{\frac12}\omega^{2k(2n+1)}\right]^{2(\gamma +\beta)-4n-3}
w_{4n+3-\gamma -\beta}\otimes w_{2n-\alpha+2},
&\beta+\gamma \in\overline{2n+3,4n+2},\\
\lambda\left[\bar{\mu}^{\frac12}\omega^{2k(2n+1)}\right]^{4n+2}
w_{1+\beta+\gamma-(4n+3)}\otimes w_{2n-\alpha+2},
&\beta+\gamma\geq 4n+3,
\end{array}\right.
\end{align*}
and   $L^{\gamma }: \mathscr{N}_{k,pq}^s\otimes \mathscr{N}_{k,pq}^s
\longrightarrow \mathscr{N}_{k,pq}^s\otimes \mathscr{N}_{k,pq}^s$ such that 
\begin{align*}
L^{\gamma }(w_\alpha\otimes w_\beta)=
\left\{\begin{array}{rl}
\left[\bar{\mu}^{\frac12}\omega^{2k(2n+1)}\right]^{2\gamma}
w_{\beta-\gamma}\otimes w_{2n-\alpha+2}, 
&\gamma<\beta,\vspace{1mm}\\
(-1)^p\left[\bar{\mu}^{\frac12}\omega^{2k(2n+1)}\right]^{2\beta-1}
w_{\gamma-\beta+1}\otimes w_{2n-\alpha+2}, 
&\gamma\in\overline{\beta,\beta+2n},\\
\lambda\left[\bar{\mu}^{\frac12}\omega^{2k(2n+1)}\right]^{2\beta}
w_{2n+1}\otimes w_{2n-\alpha+2}, 
&\gamma=\beta+2n+1,\\
\lambda\left[\bar{\mu}^{\frac12}\omega^{2k(2n+1)}\right]^{2(\gamma-2n-1)}
w_{4n+2+\beta-\gamma}\otimes w_{2n-\alpha+2}, 
&\gamma>\beta+2n+1.
\end{array}\right.
\end{align*}
Then the braiding of $\mathfrak{B}\left(\mathscr{N}_{k,pq}^s\right)$ can be described as follows. 
\begin{enumerate}
\item When $\alpha=n+1$, then 
\[
c(w_\alpha\otimes w_\beta)
=(-1)^q\left[\bar{\mu}^{\frac12}\omega^{2k(2n+1)}\right]^{2(\alpha+s-1)}
w_\beta\otimes w_{2n-\alpha+2};
\]
\item When $\alpha<n+1$, 
          $\alpha+\beta \equiv 0\mod 2$,
         then 
\[
c(w_\alpha\otimes w_\beta)
=(-1)^q\left[\bar{\mu}^{\frac12}\omega^{2k(2n+1)}\right]^{2(2\alpha+s-n-2)}
L^{2(n-\alpha+1)}(w_\alpha\otimes w_\beta);
\]
\item When $\alpha>n+1$, 
          $\alpha+\beta\equiv 1\mod 2$,
         then 
\[
c(w_\alpha\otimes w_\beta)
=(-1)^q\left[\bar{\mu}^{\frac12}\omega^{2k(2n+1)}\right]^{2(s+n)}
L^{2(\alpha-1-n)}(w_\alpha\otimes w_\beta);
\]

\item When $\alpha<n+1$, 
          $\alpha+\beta \equiv 1\mod 2$,
         then 
\[
c(w_\alpha\otimes w_\beta)
=(-1)^q\left[\bar{\mu}^{\frac12}\omega^{2k(2n+1)}\right]^{2(2\alpha+s-n-2)}
R^{2(n-\alpha+1)}(w_\alpha\otimes w_\beta);
\]
\item When $\alpha>n+1$, 
          $\alpha+\beta\equiv 0\mod 2$,
         then 
\[
c(w_\alpha\otimes w_\beta)
=(-1)^q\left[\bar{\mu}^{\frac12}\omega^{2k(2n+1)}\right]^{2(s+n)}
R^{2(\alpha-1-n)}(w_\alpha\otimes w_\beta).
\]
\end{enumerate}

\begin{lemma}
When $n=1=\mu$, $k=0$, $q=1$, then $\dim \mathfrak{B}\left(\mathscr{N}_{k,pq}^s\right)=12$. 
It is generated by $w_1$, $w_2$, $w_3$ and with relations:
\begin{align}\label{NA12Different1}
w_1^2+(-1)^pw_2w_3+(-1)^p w_3w_2=0,\quad  w_2^2=0 , \\
w_1w_2+\lambda (-1)^pw_3^2+w_2w_1=0,\quad w_1w_3=0,\quad w_3w_1=0. \label{NA12Different2}
\end{align}
\end{lemma}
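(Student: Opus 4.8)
The plan is to specialise the braiding and then recognise $\mathfrak{B}\left(\mathscr{N}_{k,pq}^s\right)$ as a $12$-dimensional Nichols algebra already understood in the literature. Putting $n=1$, $\mu=1$ (hence $\bar\mu=1$), $k=0$ (hence $\omega^{2k(2n+1)}=1$) and $q=1$ (hence $(-1)^q=-1$) collapses every scalar appearing in the formulas for $R^\gamma$, $L^\gamma$ and the five cases defining $c$ to a sign that depends only on $p$ and $\lambda$. Reading off the nine values $c(w_i\otimes w_j)$, $i,j\in\overline{1,3}$, is then finite bookkeeping; I expect a ``signed permutation'' braiding in which $c(w_\alpha\otimes w_\beta)$ is a $\pm 1$ or $\pm\lambda$ multiple of $w_{?}\otimes w_{2n-\alpha+2}=w_{?}\otimes w_{4-\alpha}$.

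Granting the braiding, I would first confirm the relations \eqref{NA12Different1}--\eqref{NA12Different2}. The degree-two component of any Nichols algebra is $V\otimes V/\ker(\mathrm{id}+c)$, so each displayed quadratic relation holds exactly when the matching tensor lies in $\ker(\mathrm{id}+c)$; I would verify this directly for $w_1\otimes w_3$, $w_3\otimes w_1$, $w_2\otimes w_2$, $w_1\otimes w_1+(-1)^p(w_2\otimes w_3+w_3\otimes w_2)$ and $w_1\otimes w_2+\lambda(-1)^p\,w_3\otimes w_3+w_2\otimes w_1$, using $\lambda^2=1$. Splitting $V\otimes V$ into the $c$-stable pieces $\k(w_1\otimes w_3)$, $\k(w_3\otimes w_1)$, $\k(w_2\otimes w_2)$, $\langle w_1\otimes w_1,\,w_2\otimes w_3,\,w_3\otimes w_2\rangle$ and $\langle w_1\otimes w_2,\,w_3\otimes w_3,\,w_2\otimes w_1\rangle$, one checks $c^3=-\mathrm{id}$ on each three-dimensional block, so its $(-1)$-eigenspace is one-dimensional. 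Hence $\dim\ker(\mathrm{id}+c)=5$, giving $\dim\mathfrak{B}^2=4$ and matching the five independent relations above.

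To fix the total dimension I would identify the braided vector space $(\mathscr{N}_{k,pq}^s,c)$ with one whose Nichols algebra is known to be $12$-dimensional. The natural target is the $12$-dimensional Nichols algebra over the dihedral group $D_6$ invoked in the $\mathscr{L}$-case through \cite[Proposition 3.3.9]{andruskiewitsch1998braided}: its braiding has precisely the $c$-spectrum found above, namely $-1$ with multiplicity $5$ together with the two primitive sixth roots of unity, each of multiplicity $2$ (the cube roots of $-1$ on the two blocks). A linear isomorphism intertwining the two braidings then yields $\mathfrak{B}\left(\mathscr{N}_{k,pq}^s\right)\cong\mathfrak{B}$ over $D_6$, with Hilbert series $(1+t)^2(1+t+t^2)=1+3t+4t^2+3t^3+t^4$ and total dimension $12$, and shows that \eqref{NA12Different1}--\eqref{NA12Different2} generate the defining ideal.

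The main obstacle is exactly this last step: establishing that the quadratic relations generate all relations, i.e.\ that $\dim\mathfrak{B}=12$ and not merely an upper or lower bound. The identification with the $D_6$ algebra settles it at once, but should a self-contained argument be wanted, I would run a diamond-lemma/PBW analysis on the algebra presented by \eqref{NA12Different1}--\eqref{NA12Different2}: order the monomials, rewrite using $w_2^2=0$, $w_1w_3=w_3w_1=0$, $w_1^2=-(-1)^p(w_2w_3+w_3w_2)$ and $w_2w_1=-w_1w_2-\lambda(-1)^p w_3^2$, and confirm that every overlap ambiguity resolves so that exactly twelve normal monomials survive, distributed as $1,3,4,3,1$ in degrees $0$ through $4$ with $\mathfrak{B}^k=0$ for $k\ge 5$. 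Either way the verification of the quadratic relations is routine; ruling out higher-degree relations is where the real content lies.
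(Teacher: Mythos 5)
Your treatment of the quadratic relations is sound and in fact cleaner than the paper's: after specialising the braiding (the nine values of $c(w_i\otimes w_j)$ do collapse to signed multiples of $w_{?}\otimes w_{4-\alpha}$ exactly as you predict), the decomposition of $V\otimes V$ into the three lines and two $3$-dimensional $c$-stable blocks with $c^3=-\mathrm{id}$ correctly gives $\dim\ker(\mathrm{id}+c)=5$, and the five $(-1)$-eigenvectors are precisely the tensors underlying \eqref{NA12Different1}--\eqref{NA12Different2}. The paper merely asserts these relations are ``easy to see''; your eigenspace argument is a genuine improvement there.

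The gap is in the dimension count. Your main route identifies $(\mathscr{N}_{k,pq}^s,c)$ with the $12$-dimensional Nichols algebra over $D_6$ solely on the grounds that the spectra of $c$ on $V\otimes V$ agree. Coincidence of spectra does not yield an isomorphism of braided vector spaces: you need an explicit linear map $\phi$ with $(\phi\otimes\phi)c=c'(\phi\otimes\phi)$, and none is exhibited. It cannot be a mere relabelling of basis vectors, since here $w_1^2\neq 0$ while $w_1w_3=w_3w_1=0$, which is not the shape of the $D_6$ (Fomin--Kirillov) relations, and the right-hand tensor factor of $c(w_\alpha\otimes -)$ is $w_{4-\alpha}$ rather than $w_\alpha$, so the braiding is not visibly of rack type in this basis. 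Your fallback, the diamond-lemma computation, is not carried out, and even if completed it only shows that the algebra \emph{presented} by \eqref{NA12Different1}--\eqref{NA12Different2} has dimension $12$, i.e.\ $\dim\mathfrak{B}\left(\mathscr{N}_{k,pq}^s\right)\leq 12$; you would still need to rule out further relations in degrees $3$ and $4$ (equivalently, that the quantum symmetrizer does not kill the top-degree element $w_2w_1^3$). The paper's proof addresses degrees $3$ and $4$ head on: it lists all nonvanishing degree-$3$ monomials (all proportional to one of $w_1^3$, $w_1^2w_2$, $w_2w_3^2$) and degree-$4$ monomials (all proportional to $w_2w_1^3$), arriving at the basis $\{1,w_1,w_2,w_3,\dots,w_1^3,w_1^2w_2,w_2w_3^2,w_2w_1^3\}$ of total dimension $12$. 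Some form of that degree-by-degree verification, or an honest construction of the intertwiner with the $D_6$ module, is needed to close your argument.
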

\begin{proof}
When $n=1$, the braiding of $\mathfrak{B}\left(\mathscr{N}_{k,pq}^s\right)$ is given by 
\begin{align*}
c( w_1 \otimes w_1 )
&= (-1)^{p+q}\left[\bar{\mu}^{\frac12}\omega^{2k(2n+1)}\right]^{2 s - 1}  w_2 \otimes w_3,\\
c( w_1 \otimes w_2 )
&=\lambda(-1)^{p+q}\left[\bar{\mu}^{\frac12}\omega^{2k(2n+1)}\right]^{2 s - 1} w_3 \otimes w_3, \\
c( w_1 \otimes w_3 )
&=(-1)^q \left[\bar{\mu}^{\frac12}\omega^{2k(2n+1)}\right]^{2 s + 2}  w_1 \otimes w_3, \\
c( w_2 \otimes w_1 )
&=(-1)^q\left[\bar{\mu}^{\frac12}\omega^{2k(2n+1)}\right]^{2 s + 2}  w_1 \otimes w_2, \\
c( w_2 \otimes w_2 )
&=(-1)^q\left[\bar{\mu}^{\frac12}\omega^{2k(2n+1)}\right]^{2 s + 2}  w_2 \otimes w_2, \\
c( w_2 \otimes w_3 )
&=(-1)^q\left[\bar{\mu}^{\frac12}\omega^{2k(2n+1)}\right]^{2 s + 2}  w_3 \otimes w_2, \\
c( w_3 \otimes w_1 )
&=(-1)^q\left[\bar{\mu}^{\frac12}\omega^{2k(2n+1)}\right]^{2 s + 2} w_3 \otimes w_1, \\
c( w_3 \otimes w_2 )
&=(-1)^{p+q}\left[\bar{\mu}^{\frac12}\omega^{2k(2n+1)}\right]^{2 s + 5} w_1 \otimes w_1, \\
c( w_3 \otimes w_3 )
&=\lambda(-1)^{p+q}\left[\bar{\mu}^{\frac12}\omega^{2k(2n+1)}\right]^{2 s + 5}w_2 \otimes w_1.
\end{align*}
It's easy to see that the relations \eqref{NA12Different1} and \eqref{NA12Different2} hold in case 
$\mu=1$, $k=0$ and $q=1$. According to direct computation with the relations, we have 
\begin{align*}
w_1^3=(-1)^{p+1}w_1w_2w_3=(-1)^{p+1}w_3w_2w_1=\lambda w_3^3,\\
w_1^2w_2=(-1)^{p+1}w_2w_3w_2=w_2w_1^2=-w_1w_2w_1,\\
w_2w_3^2=-w_3w_2w_3=w_3^2w_2=\lambda (-1)^{p+1}w_2w_1w_2, 
\end{align*}
and the other monomial in degree $3$ vanish. This in turn  implies 
\begin{align*}
w_1^4&=w_1^2w_2w_3=w_3w_2w_1^2
              =w_3^4=w_3^2w_2w_1=w_1w_2w_3^2
              =w_2w_1^2w_2\\
           &=w_2w_1w_2w_1=w_1w_2w_1w_2
              =w_3w_2w_3w_2=w_2w_3w_2w_3=w_2w_3^2w_2=0,\\
w_2w_1^3&=(-1)^{p+1}w_2w_1w_2w_3=(-1)^{p+1}w_2w_3w_2w_1=\lambda w_2w_3^3\\
&=-\lambda w_3w_2w_3^2=\lambda w_3^2w_2w_3=-\lambda w_3^3w_2
=(-1)^p w_3w_2w_1w_2 = w_1^2w_2w_1 \\
&=-w_1w_2w_1^2=-w_1^3w_2= (-1)^p w_1w_2w_3w_2,        
\end{align*}
and the other monomials in degree $4$ vanish. Now it's easy to see that the Nichols algebra 
has a basis given by 
$
\left\{1, w_1,w_2,w_3, w_1^3, w_1^2w_2, w_2w_3^2,  w_2w_1^3\right\}.
$
\end{proof}

\begin{corollary}
When $N=n=1=\mu$,  then $\dim \mathfrak{B}\left(\mathscr{N}_{k,pq}^s\right)=
\left\{\begin{array}{ll}12, &q=1,\\ \infty, &q=0.\end{array}\right.$
\end{corollary}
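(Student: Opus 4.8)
The plan is to reduce both cases to facts already in hand, since the hypotheses $N=n=1=\mu$ put us in a very degenerate corner of the braiding. The first observation I would record is that $N=1$ forces $k=0$, because the index $k$ runs over $\overline{0,N-1}=\{0\}$. Together with $\mu=1$ (so $\bar{\mu}=1$) this collapses every braiding scalar, since
\[
\bar{\mu}^{\frac12}\omega^{2k(2n+1)}=1,
\]
so that the nine entries displayed in the proof of the previous lemma (the $n=1$ specialisation of the braiding of $\mathfrak{B}(\mathscr{N}_{k,pq}^s)$) all reduce to signs depending only on $p$, $q$ and $\lambda$. This single observation handles the case $q=1$ outright: the previous lemma already asserts $\dim\mathfrak{B}(\mathscr{N}_{k,pq}^s)=12$ under exactly the hypotheses $n=1=\mu$, $k=0$, $q=1$, for either value of $\lambda$, so nothing further is needed there.

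For the case $q=0$ I would isolate the one entry of the specialised braiding that does all the work, namely
\[
c(w_2\otimes w_2)=(-1)^q\bigl[\bar{\mu}^{\frac12}\omega^{2k(2n+1)}\bigr]^{2s+2}\,w_2\otimes w_2=w_2\otimes w_2,
\]
which holds for all $p$ and $s$ once $q=0$, $\mu=1$, $k=0$. The point is that $w_2$ has self-braiding coefficient $+1$. I would then invoke the standard criterion that a vector with trivial self-braiding is never nilpotent in a Nichols algebra. Concretely, writing the homogeneous component $\mathfrak{B}(\mathscr{N}_{k,pq}^s)_m=V^{\otimes m}/\ker\mathfrak{S}_m$ with quantum symmetrizer $\mathfrak{S}_m=\sum_{\sigma\in S_m}\rho(\sigma)$, where $\rho(\sigma)$ is the lift of $\sigma$ to the braid action on $V^{\otimes m}$, every elementary braiding fixes $w_2^{\otimes m}$ because $c(w_2\otimes w_2)=w_2\otimes w_2$; hence $\rho(\sigma)(w_2^{\otimes m})=w_2^{\otimes m}$ for all $\sigma$ and $\mathfrak{S}_m(w_2^{\otimes m})=m!\,w_2^{\otimes m}\neq 0$ since $\operatorname{char}\k=0$. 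Thus $w_2^{\otimes m}\notin\ker\mathfrak{S}_m$, i.e. $w_2^m\neq 0$ in $\mathfrak{B}(\mathscr{N}_{k,pq}^s)$ for every $m\geq 1$, the Nichols algebra contains a copy of the polynomial ring $\k[w_2]$, and therefore $\dim\mathfrak{B}(\mathscr{N}_{k,pq}^s)=\infty$.

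I expect no serious obstacle: the corollary is essentially the previous lemma for $q=1$ glued to the elementary $+1$-self-braiding criterion for $q=0$. The only point requiring a moment's care is that the braiding is \emph{not} of diagonal type in the basis $w_1,w_2,w_3$, so one must not appeal blindly to the diagonal-type dictionary used in the earlier lemmas. The symmetrizer argument above sidesteps this, because it uses only the single diagonal entry $c(w_2\otimes w_2)=w_2\otimes w_2$ and is completely insensitive to the off-diagonal mixing of $w_1$ and $w_3$.
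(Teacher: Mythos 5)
Your proposal is correct and follows the route the paper intends: the case $q=1$ is exactly the preceding lemma (with $N=1$ forcing $k=0$), and the case $q=0$ is the standard observation that $c(w_2\otimes w_2)=w_2\otimes w_2$ makes $w_2$ span a trivially braided subspace, so the quantum symmetrizer argument gives $w_2^m\neq 0$ for all $m$ and hence infinite dimension. The paper leaves this second half implicit, and your symmetrizer computation is the right way to fill it in without appealing to the diagonal-type dictionary.
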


\begin{lemma}
When $n=2$, $q=1=\mu$, $k=0$, then the Nichols algebra 
$\mathfrak{B}\left(\mathscr{N}_{k,pq}^s\right)$ has the following relations. 
\begin{align*}
w_1w_5=0,\quad w_2w_4=0,\quad w_3^2=0,\quad  w_4w_2=0,\quad w_5w_1=0,\\
(-1)^p w_1^2+w_3w_2+w_2w_3+w_4w_5+w_5w_4=0,\\
w_1w_2+\lambda (-1)^p w_5^2+w_2w_1+w_3w_4+w_4w_3=0,\\
w_1w_3+(-1)^p w_2w_5+\lambda w_4^2+(-1)^p w_5w_2+w_3w_1=0,\\
w_1w_4+\lambda (-1)^p w_3w_5+ \lambda (-1)^p w_5w_3+ w_4w_1+w_2^2=0.
\end{align*}
\end{lemma}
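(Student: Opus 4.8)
The plan is to read off the stated relations as precisely the quadratic relations of the Nichols algebra, which is to say as the $(-1)$-eigenspace of the braiding on the second tensor power. Recall that $\mathfrak{B}(V)(2)=V^{\otimes 2}/\ker(\mathrm{id}+c)$, so a degree-two element maps to zero in $\mathfrak{B}(V)$ exactly when it lies in $\ker(\mathrm{id}+c)$, i.e.\ is a $(-1)$-eigenvector of $c$. Hence it suffices to write down the braiding matrix of $c$ on $V^{\otimes 2}$ for $V=\mathscr{N}_{k,pq}^s$ and to exhibit the nine listed elements as $(-1)$-eigenvectors; this is exactly the mechanism used in the $n=1$ case treated just above.

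First I would specialize the general description of the braiding, given through $R^{\gamma}$ and $L^{\gamma}$, to the parameters $n=2$, $\mu=1$, $k=0$, $q=1$. Under these choices $\bar{\mu}=1$ and $\omega^{2k(2n+1)}=\omega^{0}=1$, so every factor of the form $\bigl[\bar{\mu}^{1/2}\omega^{2k(2n+1)}\bigr]^{\bullet}$ collapses to $1$; the only surviving scalars are the signs $(-1)^p$, $(-1)^q=-1$, and the parameter $\lambda$, which enters only through the wrap-around branches of $R^{\gamma}$ and $L^{\gamma}$. Evaluating the five cases of $c$ together with the piecewise definitions of $R^{\gamma}$ and $L^{\gamma}$ at each of the $25$ pairs $(w_\alpha,w_\beta)$, $\alpha,\beta\in\overline{1,5}$, then yields an explicit table for $c$.

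With the table in hand the structure becomes transparent: $c$ carries each basis pair $w_\alpha\otimes w_\beta$ to a scalar multiple of $w_{\sigma(\alpha,\beta)}\otimes w_{6-\alpha}$, so $V^{\otimes 2}$ splits into cyclic $c$-invariant subspaces indexed by the orbits of this pair permutation. The five diagonal pairs $(\alpha,6-\alpha)$, namely $(1,5),(2,4),(3,3),(4,2),(5,1)$, are fixed, and one checks directly that $c$ acts on each of them by $-1$; these produce the monomial relations $w_1w_5=w_2w_4=w_3^2=w_4w_2=w_5w_1=0$. The remaining twenty pairs fall into four orbits of length five; on each such five-cycle the composite $c^5$ equals $-\mathrm{id}$, so $-1$ is an eigenvalue with a one-dimensional eigenspace, whose generator is a linear combination of the five monomials in the orbit. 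Solving the two-term recursion $c_i=-\xi_{i-1}c_{i-1}$ for the eigenvector coefficients along each cycle recovers the four multi-term relations with their exact signs; for instance the orbit $(1,1)\to(4,5)\to(3,2)\to(2,3)\to(5,4)\to(1,1)$ yields $(-1)^pw_1^2+w_3w_2+w_2w_3+w_4w_5+w_5w_4=0$, and the three remaining cycles, which pick up a factor of $\lambda$ where the index wraps past $2n+1$, give the relations involving $\lambda(-1)^pw_5^2$, $\lambda w_4^2$ and $\lambda(-1)^p(w_3w_5+w_5w_3)$.

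The main obstacle is purely bookkeeping. The formulas for $R^{\gamma}$ and $L^{\gamma}$ branch on the inequalities among $\gamma$, $\beta$ and $2n+1$, and one must select the correct branch and then track the resulting indices $w_{4n+3-\gamma-\beta}$, $w_{2n-\alpha+2}$, and so on, for all twenty-five pairs without error; assembling the four five-cycles and confirming $c^5=-\mathrm{id}$ on each (so that $-1$ is indeed attained) is where attention is needed. Once the braiding table is verified, extracting the relations is the short eigenvector computation sketched above, entirely parallel to the $n=1$ case.
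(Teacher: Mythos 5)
Your proposal is correct and matches the paper's argument: the paper likewise specializes the braiding table to $n=2$, $\mu=1$, $k=0$ and then verifies the relations by direct computation, which amounts precisely to checking that each listed element lies in $\ker(\mathrm{id}+c)$, the kernel of the degree-two quantum symmetrizer. Your orbit decomposition of the $25$ basis pairs into the five fixed pairs $(\alpha,6-\alpha)$ and four five-cycles with $c^5=-\mathrm{id}$ is a clean way to organize that same verification and correctly reproduces all nine relations with their signs.
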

\begin{proof}
When $n=2$, the braiding of $\mathfrak{B}\left(\mathscr{N}_{k,pq}^s\right)$ is given by 
\begin{align*}
c( w_1 \otimes w_1 )&= (-1)^{p+q} \left[\bar{\mu}^{\frac12}\omega^{2k(2n+1)}\right]^{2 s - 3}  w_4 \otimes w_5, \\
c( w_1 \otimes w_2 )&= \lambda(-1)^{p+q}  \left[\bar{\mu}^{\frac12}\omega^{2k(2n+1)}\right]^{2 s - 3}  w_5 \otimes w_5, \\
c( w_1 \otimes w_3 )&= (-1)^{p+q}  \left[\bar{\mu}^{\frac12}\omega^{2k(2n+1)}\right]^{2 s + 1}  w_2 \otimes w_5, \\
c( w_1 \otimes w_4 )&= \lambda(-1)^{p+q}  \left[\bar{\mu}^{\frac12}\omega^{2k(2n+1)}\right]^{2 s + 1}  w_3 \otimes w_5, \\
c( w_1 \otimes w_5 )&= (-1)^q \left[\bar{\mu}^{\frac12}\omega^{2k(2n+1)}\right]^{2 s + 4}  w_1 \otimes w_5, \\
c( w_2 \otimes w_1 )&= (-1)^q \left[\bar{\mu}^{\frac12}\omega^{2k(2n+1)}\right]^{2 s}  w_3 \otimes w_4, \\
c( w_2 \otimes w_2 )&= (-1)^{p+q} \left[\bar{\mu}^{\frac12}\omega^{2k(2n+1)}\right]^{2 s + 3}  w_1 \otimes w_4, \\
c( w_2 \otimes w_3 )&= (-1)^q \left[\bar{\mu}^{\frac12}\omega^{2k(2n+1)}\right]^{2 s}  w_5 \otimes w_4, \\
c( w_2 \otimes w_4 )&= (-1)^q \left[\bar{\mu}^{\frac12}\omega^{2k(2n+1)}\right]^{2 s + 4}  w_2 \otimes w_4, \\
c( w_2 \otimes w_5 )&= \lambda(-1)^{p+q} \left[\bar{\mu}^{\frac12}\omega^{2k(2n+1)}\right]^{2 s + 3}  w_4 \otimes w_4, \\
c( w_3 \otimes w_1 )&= (-1)^q \left[\bar{\mu}^{\frac12}\omega^{2k(2n+1)}\right]^{2 s + 4}  w_1 \otimes w_3, \\
c( w_3 \otimes w_2 )&= (-1)^q \left[\bar{\mu}^{\frac12}\omega^{2k(2n+1)}\right]^{2 s + 4}  w_2 \otimes w_3, \\
c( w_3 \otimes w_3 )&= (-1)^q \left[\bar{\mu}^{\frac12}\omega^{2k(2n+1)}\right]^{2 s + 4}  w_3 \otimes w_3, \\
c( w_3 \otimes w_4 )&= (-1)^q \left[\bar{\mu}^{\frac12}\omega^{2k(2n+1)}\right]^{2 s + 4}  w_4 \otimes w_3, \\
c( w_3 \otimes w_5 )&= (-1)^q \left[\bar{\mu}^{\frac12}\omega^{2k(2n+1)}\right]^{2 s + 4}  w_5 \otimes w_3, \\
c( w_4 \otimes w_1 )&= (-1)^{p+q}  \left[\bar{\mu}^{\frac12}\omega^{2k(2n+1)}\right]^{2 s + 5}  w_2 \otimes w_2, \\
c( w_4 \otimes w_2 )&= (-1)^q \left[\bar{\mu}^{\frac12}\omega^{2k(2n+1)}\right]^{2 s + 4}  w_4 \otimes w_2, \\
c( w_4 \otimes w_3 )&= (-1)^q \left[\bar{\mu}^{\frac12}\omega^{2k(2n+1)}\right]^{2 s + 8}  w_1 \otimes w_2, \\
c( w_4 \otimes w_4 )&= \lambda(-1)^{p+q}  \left[\bar{\mu}^{\frac12}\omega^{2k(2n+1)}\right]^{2 s + 5}  w_5 \otimes w_2, \\
c( w_4 \otimes w_5 )&= (-1)^q \left[\bar{\mu}^{\frac12}\omega^{2k(2n+1)}\right]^{2 s + 8}  w_3 \otimes w_2, \\
c( w_5 \otimes w_1 )&= (-1)^q \left[\bar{\mu}^{\frac12}\omega^{2k(2n+1)}\right]^{2 s + 4}  w_5 \otimes w_1, \\
c( w_5 \otimes w_2 )&= (-1)^{p+q} \left[\bar{\mu}^{\frac12}\omega^{2k(2n+1)}\right]^{2 s + 7}  w_3 \otimes w_1, \\
c( w_5 \otimes w_3 )&= \lambda(-1)^{p+q}  \left[\bar{\mu}^{\frac12}\omega^{2k(2n+1)}\right]^{2 s + 7}  w_4 \otimes w_1, \\
c( w_5 \otimes w_4 )&= (-1)^{p+q} \left[\bar{\mu}^{\frac12}\omega^{2k(2n+1)}\right]^{2 s + 11}  w_1 \otimes w_1, \\
c( w_5 \otimes w_5 )&= \lambda(-1)^{p+q}  \left[\bar{\mu}^{\frac12}\omega^{2k(2n+1)}\right]^{2 s + 11}  w_2 \otimes w_1.
\end{align*}
The relations can be checked via direct computations. 
\end{proof}

\subsection{Finite dimensional Nichols algebras over $A_{13}^{+\lambda}$}
\begin{theorem}
Finite dimensional Nichols algebras over $A_{13}^{+\lambda}$, associated with simple 
Yetter-Drinfeld modules, can be classified as follows. 
\begin{enumerate}
\item $\dim\mathfrak{B}\left(\mathscr{B}_{k,p}^s\right)
        =\left\{\begin{array}{ll}2,&\text{if}\,\,\lambda=1, s=1, k=0, p=1,\\
         4, &\text{if}\,\, \lambda=-1, s=1, k=0, p\in\Bbb{Z}_2.\end{array}\right.$
\item $\dim \mathfrak{B}\left(\mathscr{F}_{k,p}^{st}\right)=4$ in case  
          $s=1$, $t=0$, $k=1$, $p\in\Bbb{Z}_2$.          
\item $\dim \mathfrak{B}\left(\mathscr{G}_{k,p}^{st}\right)=4$ in case   $\lambda=1=s$, $t=0$, 
         $(p,k)=(0,1)$ or $(1,0)$. 
\item  $\dim \mathfrak{B}\left(\mathscr{L}_{k,pq}^s\right)
         =12$ in case $s=p=1$, $k=0$, $q\in\Bbb{Z}_2$. 
\item $\dim \mathfrak{B}\left(\mathscr{N}_{k,pq}^s\right)=12$ in case    
        $s=q=1$, $k=0$, $p\in\Bbb{Z}_2$.     
\end{enumerate}
\end{theorem}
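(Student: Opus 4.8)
The plan is to obtain the classification as a direct specialization of the general Lemmas and Corollaries already proved in this section, fixing $N=1$, $n=1$ and $\mu=+1$ (so $2n+1=3$ and $\omega$ is a primitive $12$-th root of unity) while keeping $\lambda=\pm1$ free. First I would use Theorem \ref{YDM} to list which simple Yetter-Drinfeld modules remain under these constraints: the index ranges collapse to $s=1$, $t=0$, $j=2$, with $k\in\overline{0,1}$ for the one-dimensional families and $k=0$ for every family carrying the index $\frac j2$ or the $(2n+1)$-dimensional families. The theorem then reduces to evaluating $\dim\mathfrak{B}(-)$ on this explicit finite list.

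Second, I would eliminate the families that never contribute. The remark after Lemma \ref{NA_A} gives $\dim\mathfrak{B}(\mathscr{A}_{k,p}^s)=\infty$ for $N=1$; evaluating the invariants $\alpha,\beta$ of Lemmas \ref{NA_C} and \ref{NA_D} at $k=0$ (where $\alpha=4$, $\beta=-8$ against the modulus $4N(2n+1)=12$) shows that none of the four finiteness conditions holds, so $\mathscr{C}$ and $\mathscr{D}$ are infinite-dimensional; in Lemma \ref{NA_E} one has $q=1$ at $k=0$, again infinite; and the Corollaries following Lemmas \ref{NA_H} and \ref{NA_I} already record that $\mathscr{H}$ and $\mathscr{I}$ are infinite-dimensional for $N=n=1$, $\mu=1$. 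What survives is exactly $\mathscr{B}$, $\mathscr{F}$, $\mathscr{G}$, $\mathscr{L}$, $\mathscr{N}$.

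Third, I would evaluate the braiding scalars on the survivors. Item (1) is the $N=1$ line of the remark after Lemma \ref{NA_B} (dimension $2$ for $\lambda=1$, dimension $4$ for $\lambda=-1$). Items (2) and (3) follow from Lemmas \ref{NA_F} and \ref{NA_G} using $\omega^{18k}=\omega^{6k}$ and $\omega^{6}=-1$: one gets $q=-1$, hence Cartan type $A_1\times A_1$ and dimension $4$, precisely at the listed parameters; for $\mathscr{G}$ with $\lambda=-1$ the factor $\lambda^{t+3/2}$ is a primitive $4$-th root of unity, forcing $q$ to have order $4$ and thus excluding finiteness, which is what restricts item (3) to $\lambda=1$. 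Items (4) and (5) are immediate from the Corollaries in Sections \ref{section_NAL} and \ref{section_NAN}, giving dimension $12$ when $p=1$, respectively $q=1$.

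The only genuinely delicate point is bookkeeping rather than computation, and I expect it to be the main obstacle: reconciling the parameter ranges of the surviving modules with the single canonical value $k=0$ written in the statement. For $\mathscr{B}$ the remark yields finite-dimensional algebras not only at $k=0$ but also at $k=1$ (for $\lambda=1$ at $(k,p)=(1,0)$, and for $\lambda=-1$ at every $(k,p)$), so I would show, via the module/comodule identifications discussed just before Theorem \ref{YDM}, that the $k=1$ modules are isomorphic as Yetter-Drinfeld modules to the listed $k=0$ representatives. This guarantees that no finite-dimensional Nichols algebra is omitted and none is counted twice; once the identifications are in place, collecting the evaluations above yields the five items.
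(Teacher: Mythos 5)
Your overall route coincides with the paper's: the paper offers nothing beyond ``the proof is just a summation of the section,'' and your specialization $N=1$, $n=1$, $\mu=1$, the elimination of $\mathscr{A}$, $\mathscr{C}$, $\mathscr{D}$, $\mathscr{E}$, $\mathscr{H}$, $\mathscr{I}$, and the evaluations giving items (2)--(5) are all correct and agree with Lemmas \ref{NA_A}--\ref{NA_I} and the corollaries of Sections \ref{section_NAL} and \ref{section_NAN}. In particular your computation that $\omega^{18k}=(-1)^k$ forces $k=1$ in item (2), and that $\lambda^{t+3/2}$ being a primitive fourth root of unity kills the $\lambda=-1$ case of item (3), is exactly what the lemmas give.

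The gap is in your treatment of item (1). You correctly observe that Lemma \ref{NA_B} and its remark produce finite-dimensional Nichols algebras for $\mathscr{B}_{k,p}^{1}$ also at $k=1$ (for $\lambda=1$ at $(k,p)=(1,0)$, for $\lambda=-1$ at every $(k,p)$), but your proposed repair --- exhibiting Yetter-Drinfeld isomorphisms carrying the $k=1$ modules onto the listed $k=0$ representatives --- cannot succeed. Theorem \ref{YDM} asserts that the $8N^2$ one-dimensional modules $\mathscr{A}_{k,p}^{s}$, $\mathscr{B}_{k,p}^{s}$ with $s\in\overline{1,N}$, $k\in\overline{0,2N-1}$, $p\in\Bbb{Z}_2$ are \emph{pairwise non-isomorphic}, and the completeness count $8N^2\cdot 1^2+8N^2n(n+1)\cdot 2^2+8N^2\cdot(2n+1)^2=\left[4N(2n+1)\right]^2$ depends on precisely this; for $N=1$ there is no room to identify $\mathscr{B}_{1,0}^{1}$ with $\mathscr{B}_{0,1}^{1}$. (The discussion preceding Theorem \ref{YDM} that you invoke is used there to prove that certain modules are \emph{not} isomorphic; it supplies no identifications.) What is true is that $\mathscr{B}_{1,p}^{1}$ and $\mathscr{B}_{0,p+1}^{1}$ carry the same one-dimensional braiding, hence isomorphic Nichols algebras, while remaining distinct simple Yetter-Drinfeld modules. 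So the honest conclusion of your argument is that item (1) as printed omits the $k=1$ entries; you must either add them to the list or state explicitly that only one representative per braided vector space is being recorded --- you cannot dispose of them by a module isomorphism.
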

\begin{remark}
The proof is just a summation of the section. We will deal with the 
classification of finite dimensional Hopf algebras over $A_{13}^{+\lambda}$ in a sequel\cite{Shi}.
\end{remark}

\section{Appendix}
\subsection{One dimensional Yetter-Drinfeld modules over $A_{N\,2n+1}^{\mu\lambda}$}
\begin{enumerate}
\item $\mathscr{A}_{k,p}^s=\k w$, where 
$
w=v\boxtimes \left[x_{11}^{2s}+(-1)^p x_{12}^{2s}\right],
$
$s\in\overline{1,N}$, $V_{k}=\k v$, $p\in\Bbb{Z}_2$.
\item $\mathscr{B}_{k,p}^s=\k w$, where 
$w=v\boxtimes \left[x_{11}^{2s+1}\chi_{22}^{2n}+(-1)^p\sqrt{\lambda}x_{12}^{2s+1}\chi_{21}^{2n}\right]$, $V_k^\prime=\k v$, $s\in\overline{1,N}$, $p\in\Bbb{Z}_2$.
\end{enumerate}

\subsection{Two dimensional Yetter-Drinfeld modules over $A_{N\,2n+1}^{\mu\lambda}$}
\begin{enumerate}
\item $\mathscr{C}_{jk,p}^{st}=\k w_1\oplus \k w_2$, where 
\begin{align*}
w_1&=v_1\boxtimes x_{11}^{2s}\chi_{22}^{2t+2}
    +(-1)^p\omega^{-2k(2n+1)+jN} v_2\boxtimes x_{12}^{2s}\chi_{21}^{2t+2}\\
w_2&=v_2\boxtimes x_{11}^{2s+1}\chi_{22}^{2t+1}
           +(-1)^p\omega^{2k(2n+1)-jN}v_1\boxtimes x_{12}^{2s+1}\chi_{21}^{2t+1},
\end{align*}
$s\in\overline{1,N}$, $p\in\Bbb{Z}_2$, $t\in\overline{0,n-1}$, $V_{jk}=\k v_1\oplus\k v_2$.

\item $\mathscr{D}_{jk,p}^{st}=\k w_1\oplus \k w_2$, where  
\begin{align*}
w_1&=v_1\boxtimes x_{11}^{2s+1}\chi_{22}^{2t+1}
        +(-1)^p\omega^{jN-2k(2n+1)}v_2\boxtimes x_{12}^{2s+1}\chi_{21}^{2t+1},\\
w_2&=v_2\boxtimes x_{11}^{2s}\chi_{22}^{2t+2}
        +(-1)^p\omega^{2k(2n+1)-jN}v_1\boxtimes x_{12}^{2s}\chi_{21}^{2t+2},
\end{align*}
$s\in\overline{1,N}$, $p\in\Bbb{Z}_2$, $t\in\overline{0,n-1}$,
$V_{jk}=\k v_1\oplus\k v_2$.

\item $\mathscr{E}_{jk,p}^s=\k w_1\oplus \k w_2$, where 
\begin{align*}
w_1&=v_1\boxtimes x_{11}^{2s}+(-1)^p\omega^{jN-2k(2n+1)}v_2\boxtimes x_{12}^{2s},\\
w_2&=v_2\boxtimes x_{11}^{2s}+(-1)^p\omega^{2k(2n+1)-jN}v_1\boxtimes x_{12}^{2s},
\end{align*}
$s\in\overline{1,N}$, $p\in\Bbb{Z}_2$, $V_{jk}=\k v_1\oplus\k v_2$.

\item  $\mathscr{F}_{k,p}^{st}=\k w_1\oplus\k w_2$, where  
\begin{align*}
w_1&=v\boxtimes \left[x_{11}^{2s+1}\chi_{22}^{2t+1}+(-1)^px_{12}^{2s+1}\chi_{21}^{2t+1}\right],\\
w_2&=v\boxtimes \left[x_{11}^{2s}\chi_{22}^{2t+2}+(-1)^px_{12}^{2s}\chi_{21}^{2t+2}\right],
\end{align*}
$s\in\overline{1,N}$, $t\in\overline{0,n-1}$, $V_{k}=\k v$, $p\in\Bbb{Z}_2$.

\item $\mathscr{G}_{k,p}^{st}=\k w_1\oplus \k w_2$, where 
\begin{align*}
w_1&=v\boxtimes\left[x_{11}^{2s}\chi_{22}^{2t+1}
          +(-1)^p\sqrt{\lambda}x_{12}^{2s}\chi_{21}^{2t+1}\right],\\
w_2&=v\boxtimes\left[x_{11}^{2s+1}\chi_{22}^{2t}
        +\frac{(-1)^p}{\sqrt{\lambda}}x_{12}^{2s+1}\chi_{21}^{2t}\right],
\end{align*}
$s\in\overline{1,N}$, $t\in\overline{0,n-1}$, $p\in\Bbb{Z}_2$, $V_k^\prime=\k v$.  

\item $\mathscr{H}_{jk,p}^{st}=\k w_1\oplus \k w_2$, where  
\begin{align*}
w_1&=v_1^\prime\boxtimes x_{11}^{2s}\chi_{22}^{2t+1}
            +\frac{(-1)^p}{\sqrt{\lambda\bar{\mu}}}\omega^{jN-2k(2n+1)}
           v_2^\prime\boxtimes x_{12}^{2s}\chi_{21}^{2t+1},\\
w_2&=v_2^\prime\boxtimes x_{11}^{2s+1}\chi_{22}^{2t}
           +(-1)^p\sqrt{\lambda\bar{\mu}}\omega^{2k(2n+1)-jN}
           v_1^\prime\boxtimes x_{12}^{2s+1}\chi_{21}^{2t},
\end{align*}
$V_{jk}^\prime=\k v_1^\prime\oplus\k v_2^\prime$, 
$p\in\Bbb{Z}_2$, $s\in\overline{1,N}$, $t\in\overline{0,n-1}$.

\item $\mathscr{I}_{jk,p}^{st}=\k\,w_1\oplus \k\,w_2$, where 
\begin{align*}
w_1&=v_1^\prime\boxtimes x_{11}^{2s+1}\chi_{22}^{2t}+\frac{(-1)^p}{\sqrt{\lambda\bar{\mu}}}\omega^{jN-2k(2n+1)}v_2^\prime
           \boxtimes x_{12}^{2s+1}\chi_{21}^{2t},\\
w_2&=v_2^\prime\boxtimes x_{11}^{2s}\chi_{22}^{2t+1}
    +(-1)^p\sqrt{\lambda\bar{\mu}}\omega^{2k(2n+1)-jN}v_1^\prime
    \boxtimes x_{12}^{2s}\chi_{21}^{2t+1},
\end{align*}
$V_{jk}^\prime=\k v_1^\prime\oplus\k v_2^\prime$, 
$p\in\Bbb{Z}_2$, $s\in\overline{1,N}$, $t\in\overline{0,n-1}$.
\end{enumerate}

\subsection{$2n+1$ dimensional Yetter-Drinfeld modules over $A_{N\,2n+1}^{\mu\lambda}$}
\begin{enumerate}
\item Let $s\in\overline{1,N}$, $p,q\in\Bbb{Z}_2$, 
$V_{jk}=\k v_1\oplus\k v_2$, and denote 
\begin{align*}
a&=(-1)^p\omega^{2jN-2k(2n+1)},\quad b=(-1)^q\sqrt{\lambda}\omega^{-4kn(2n+1)}, \\
m_1&= \left(v_1+av_2\right)\boxtimes x_{11}^{2s-1}
+b\chi_{22}^{2n}\cdot \left[\left(v_1+av_2\right)\boxtimes x_{21}^{2s-1}\right],\\
m_i&= \left\{\begin{array}{rl}
x_{22}\cdot m_{i-1}=\chi_{22}^{i-1}\cdot m_1, 
& i\,\,\text{is even}, 1<i\leq 2n+1,\vspace{1mm}\\
x_{11}\cdot m_{i-1}=\chi_{11}^{i-1}\cdot m_1, 
& i\,\,\text{is odd}, 1<i\leq 2n+1.
\end{array}\right.
\end{align*}
Then $\mathscr{L}_{k,pq}^s=\bigoplus_{i=1}^{2n+1}\k m_i$ is a $2n+1$ dimensional Yetter-Drinfeld module over $A_{N\,2n+1}^{\mu\lambda}$ with  the module structure given by
\begin{align*}
x_{11}\cdot m_i
&=\left\{\begin{array}{rl}
           \chi_{11}^{i}\cdot w_1=m_{i+1}, & i\,\,\text{is even}, 1<i\leq 2n+1,\vspace{1mm}\\
           \chi_{22}^{i-2}x_{11}^2\cdot m_1=\omega^{4k(2n+1)}m_{i-1}, 
                                                    & i\,\,\text{is odd}, 1<i\leq 2n+1, \\
           (-1)^p\omega^{2k(2n+1)} m_1, & i=1,\vspace{1mm}\\
   \end{array}\right.\\
x_{22}\cdot m_i
&=\left\{\begin{array}{rl}
           \chi_{11}^{i-2}\cdot m_1=\omega^{4k(2n+1)}m_{i-1}, 
                                     & i\,\,\text{is even}, 1<i< 2n+1,\vspace{1mm}\\
           \chi_{22}^{i}\cdot m_1=m_{i+1}, & i\,\,\text{is odd}, 1\leq i< 2n+1, \vspace{1mm}\\
           \chi_{22}^{2n+1}\cdot m_1=(-1)^p \omega^{2k(2n+1)}m_{2n+1}, & i=2n+1,\vspace{1mm}\\
   \end{array}\right.\\
x_{pq}\cdot m_i
&=0, \quad pq=12\,\,\text{or}\,\, 21, 1\leq i\leq 2n+1.
\end{align*}
And the comodule structure is given by
\begin{align*}
\rho(m_i)=\left\{\begin{array}{rl}
              x_{11}^{2(s-i)}\chi_{22}^{2i-1}\otimes m_i
    +(-1)^q\lambda\sqrt{\lambda}\omega^{4k(2n+1)(i-1-n)} 
     x_{12}^{2(s-i)}\chi_{21}^{2i-1}\otimes   m_{2n+2-i},
               & i\,\,\text{even},   \vspace{1mm}\\
               x_{11}^{2(s-i)}\chi_{11}^{2i-1}\otimes m_i
               +(-1)^q\lambda\sqrt{\lambda}\omega^{4k(2n+1)(i-1-n)}
               x_{12}^{2(s-i)}\chi_{12}^{2i-1}\otimes m_{2n+2-i},
              & i\,\,\text{odd}.  
              \end{array}
     \right.
\end{align*}

\item Let  $V_{jk}^\prime=\k v_1^\prime\oplus\k v_2^\prime$,  $a=\frac{(-1)^p}{\sqrt{\bar{\mu}}}\omega^{-2k(2n+1)}$, $p\in\Bbb{Z}_2$,  $q\in\Bbb{Z}_2$,  $s\in\overline{1,N}$,
$b=(-1)^q\left(\bar{\mu}\omega^{4k(2n+1)}\right)^{-n}$,   
and denote 
\begin{align*}
w_1&=(v_1^\prime+av_2^\prime)\boxtimes x_{11}^{2s}\chi_{22}^{2n}
        +b\chi_{21}^{2n}\cdot \left[(v_1^\prime+av_2^\prime)
                      \boxtimes x_{12}^{2s+1}\chi_{21}^{2n-1}\right],\\
w_i&= \left\{\begin{array}{rl}
x_{21}\cdot w_{i-1}=\chi_{21}^{i-1}\cdot w_1, 
& i\,\,\text{is even}, 1<i\leq 2n+1,\vspace{1mm}\\
x_{12}\cdot w_{i-1}=\chi_{12}^{i-1}\cdot w_1, 
& i\,\,\text{is odd}, 1<i\leq 2n+1.
\end{array}\right.
\end{align*}
Then $\mathscr{N}_{k,pq}^s=\bigoplus_{i=1}^{2n+1}\k w_i$  is a $2n+1$ dimensional Yetter-Drinfeld 
module over $A_{N\, 2n+1}^{\mu\lambda}$ with the module structure given by 
\begin{align*}
x_{12}\cdot w_i
&=\left\{ \begin{array}{rl}
          \frac1a w_1, & i=1,\\
          w_{i+1},  & i\,\,\,\text{even}\\
          \frac1{a^2} w_{i-1}, & i\,\,\,\text{odd and}\,\,\, i>1
          \end{array}\right.\\
x_{21}\cdot w_i
&=\left\{ \begin{array}{rl}
          \frac{\lambda}{a} w_{2n+1}, & i=2n+1,\vspace{1mm}\\
          \frac1{a^2}w_{i-1},  & i\,\,\,\text{even}\\
          w_{i+1}, & i\,\,\,\text{odd and}\,\,\, i<2n+1,
          \end{array}\right.\\
x_{\alpha\beta}\cdot w_i&=0, \quad \alpha\beta=11\,\,\,\text{or}\,\,\,22,
\end{align*}
and the comodule structure given by 
\[
\rho(w_i)
=\left\{\begin{array}{ll}
x_{11}^{2s}\chi_{22}^{i-1}\chi_{22}^{2n-i+1}\otimes w_i
     +\frac{b}{a^{2(i-1)}}
        x_{12}^{2s}\chi_{21}^{i-1}\chi_{21}^{2n-i+1}\otimes w_{2n-i+2}, & i\,\text{even}\\
x_{11}^{2s}\chi_{22}^{2n-i+1}\chi_{11}^{i-1}\otimes w_i
     +\frac{b}{a^{2(i-1)}}
       x_{12}^{2s}\chi_{21}^{2n-i+1}\chi_{12}^{i-1}\otimes w_{2n-i+2}, & i\,\text{odd}.
\end{array}\right.       
\]

\item $\mathscr{K}_{k,p}^s=\bigoplus_{i=1}^{2n+1}\mathbb{K}w_i$, where 
\begin{align*}
w_1&= v\boxtimes x_{11}^{2s-1}+(-1)^p\sqrt{\lambda}\omega^{-4kn(2n+1)}\chi_{22}^{2n}\cdot\left[v\boxtimes x_{12}^{2s-2}\chi_{21}\right],\\
w_i&= \left\{\begin{array}{rl}
x_{22}\cdot w_{i-1}=\chi_{22}^{i-1}\cdot w_1, 
& i\,\,\text{is even}, 1<i\leq 2n+1,\vspace{1mm}\\
x_{11}\cdot w_{i-1}=\chi_{11}^{i-1}\cdot w_1, 
& i\,\,\text{is odd}, 1<i\leq 2n+1,
\end{array}\right.
\end{align*}
$s\in\overline{1,N}$, $p\in\Bbb{Z}_2$, $V_{k}=\mathbb{K}v$.

\item $\mathscr{M}_{k,p}^s=\bigoplus_{i=1}^{2n+1}\k m_i$, where 
\begin{align*}
m_1&= v\boxtimes x_{11}^{2s}\chi_{22}^{2n}
       +a\chi_{21}^{2n}\cdot \left[v\boxtimes x_{12}^{2s+1}\chi_{21}^{2n-1}\right],\\
m_i&= \left\{\begin{array}{rl}
x_{21}\cdot m_{i-1}=\chi_{21}^{i-1}\cdot m_1, 
& i\,\,\text{is even}, 1<i\leq 2n+1,\vspace{1mm}\\
x_{12}\cdot m_{i-1}=\chi_{12}^{i-1}\cdot m_1, 
& i\,\,\text{is odd}, 1<i\leq 2n+1, 
\end{array}\right.
\end{align*}
$V_{k}^\prime=\k v$, $a=(-1)^p \omega^{-4kn(2n+1)}\tilde{\mu}^{-2n}$, 
$p\in\Bbb{Z}_2$, $s\in\overline{1,N}$.
\end{enumerate}

\providecommand{\bysame}{\leavevmode\hbox to3em{\hrulefill}\thinspace}
\providecommand{\MR}{\relax\ifhmode\unskip\space\fi MR }
\providecommand{\MRhref}[2]{%
  \href{http://www.ams.org/mathscinet-getitem?mr=#1}{#2}
}
\providecommand{\href}[2]{#2}

\end{document}